\newtheorem{theorem}{Theorem}
\newtheorem{lemma}[theorem]{Lemma}
\newtheorem{corollary}[theorem]{Corollary}
\newtheorem{proposition}[theorem]{Proposition}
\newtheorem{theirtheorem}{Theorem}
\newtheorem{theirlemma}[theirtheorem]{Lemma}
\newcommand{\subgp}[1]{\langle{#1}\rangle}
\def\Z{\mathbb Z}
\def\zn{{\mathbb Z}/n{\mathbb Z}}
\begin{document}

\title{Large restricted sumsets in general Abelian groups}

\author{Y. ould Hamidoune \and S. C. L\'opez \and A. Plagne}
\email{susana@ma4.upc.edu}
\email{plagne@math.polytechnique.fr}
\address{Universitat Polit\`ecnica de Catalunya. BarcelonaTech, Dept. Matem\`atica Apl. IV;
C/Esteve Terrades, 5, Castelldefels, Spain.}
\address{\'Ecole polytechnique, Centre de Math\'ematiques Laurent Schwartz,
UMR 7640 du CNRS, 91128 Pa\-lai\-seau cedex, France.}

\maketitle

\date{\today}

\vspace{.5cm}
\begin{center}
\noindent{\em Soon after this work was started,\\
the first-named author and main inspirator of this article passed
away unexpectedly.\\ The two other authors dedicate the paper to
his memory.}
\end{center}
\vspace{.5cm}

\begin{abstract}
Let $A$, $B$ and $S$ be three subsets of a finite Abelian group $G$.
The restricted sumset of $A$ and $B$ with respect to $S$ is defined
as $A\wedge^{S} B= \{ a+b: a\in A, b\in B\ \mbox{and}\ a-b\notin S\}$.
Let
$L_S=\max_{z\in G}|\{(x,y):\  x,y\in G,\ x+y=z \ \mbox{and}\  x-y\in S\}|$.
A simple application of the pigeonhole principle shows that $|A|+|B|>|G|+L_S$ implies $A\wedge^S B=G$. We then prove
that if $|A|+|B|=|G|+L_S$ then $|A\wedge^S B|\ge |G|-2|S|$. We also
characterize the triples of sets $(A,B,S)$ such that $|A|+|B|=|G|+L_S$
and $|A\wedge^S B|= |G|-2|S|$. Moreover, in this case, we also provide
the structure of the set $G\setminus (A\wedge^S B)$.
\end{abstract}

\section{Introduction}
In this paper $G$ be a finite Abelian group. Given two subsets $A$ and $B$ of $G$,
the {\em sumset} and the {\em restricted sumset} of $A$ and $B$ are defined,
respectively, by
$$
A+B=\{a+b: a\in A, b\in B\}\ \text{ and }\
A \wedge B = \{a+b:a\in A, b\in B\ \mbox{and}\ a\neq b\}.
$$
We shall write $A+b$ or $A-b$ instead of $A+\{ b \}$ and $A + \{ -b \}$.

To give lower bounds for the cardinality of sumsets is probably the most
central problem of additive number theory (see \cite{Nat96} for a general
overview). A historical result in this area is the famous Cauchy-Davenport
theorem \cite{Cauchy, Davenport}.

\begin{theirtheorem}[Cauchy, Davenport]
Let $A$ and $B$ be non-empty subsets of the group of prime order $p$.
Then
$$
|A+B|\ge\min\{p,|A|+|B|-1\}.
$$
\end{theirtheorem}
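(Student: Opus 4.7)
The plan is a classical induction on $|B|$ via Dyson's $e$-transform, with primality entering at exactly one step. The base case $|B|=1$ is immediate: $A+B$ is a translate of $A$, so $|A+B|=|A|=|A|+|B|-1$. For the inductive step I may assume $A+B\neq G=\Zp$, for otherwise $|A+B|=p\geq\min\{p,|A|+|B|-1\}$ and there is nothing to prove.

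The tool is as follows: for each $e\in G$, set
$$A_e=A\cup(B+e),\qquad B_e=B\cap(A-e).$$
Two elementary identities carry the argument. First, $A_e+B_e\subseteq A+B$, verified by splitting elements of $A_e$ into those lying in $A$ and those in $B+e$, and using that $B_e+e\subseteq A$. Second, $|A_e|+|B_e|=|A|+|B|$, because $A\cap(B+e)$ is a translate of $B_e$ and $|A|+|B+e|=|A\cup(B+e)|+|A\cap(B+e)|$.

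The key step is choosing $e$ so that $B_e$ is a non-empty proper subset of $B$. Taking $e=a-b$ for any fixed $a\in A$ and $b\in B$ places $b$ in $B_e$, so $B_e\neq\emptyset$. The strict containment $B_e\subsetneq B$ amounts to $B+e\not\subseteq A$ for some such $e$, and I expect this to be the only genuine obstacle, since it is exactly where primality becomes indispensable. If on the contrary $B+(a-b)\subseteq A$ for every $a\in A$ and $b\in B$, then by iteration $A$ is invariant under translation by every element of the subgroup $H=\langle B-b_0\rangle$, for any fixed $b_0\in B$. Since $|B|\geq 2$, $H$ is non-trivial; primality of $p$ then forces $H=G$, so $A=G$ and $A+B=G$, contradicting $A+B\neq G$.

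With such an $e$ fixed, $|B_e|<|B|$ and the inductive hypothesis applied to $(A_e,B_e)$ yields
$$|A+B|\geq|A_e+B_e|\geq\min\{p,|A_e|+|B_e|-1\}=\min\{p,|A|+|B|-1\},$$
completing the induction. The conceptual weight of the argument sits entirely in the dichotomy of the previous paragraph; the rest is elementary bookkeeping with the transform.
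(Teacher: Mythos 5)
Your argument is a correct and complete proof of the Cauchy--Davenport theorem by the classical Dyson $e$-transform: the two identities $A_e+B_e\subseteq A+B$ and $|A_e|+|B_e|=|A|+|B|$ are verified correctly, the dichotomy isolating where primality is used (if no $e=a-b$ gives $B_e\subsetneq B$, then $A$ is stable under the nontrivial subgroup generated by $B-b_0$, forcing $A=G$) is sound, and the induction on $|B|$ closes properly since $A_e\supseteq A$ and $B_e\ni b$ are non-empty. There is nothing in the paper to compare it against: the paper states this result as classical background (Theorem A) and cites Cauchy and Davenport without reproducing any proof, so your transform argument stands on its own as one of the standard derivations (the other common routes being Vosper-style counting or the polynomial method of Alon, Nathanson and Ruzsa mentioned later in the introduction).
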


For restricted sumsets, the most famous result is due to Dias da Silva and
Hamidoune \cite{DiaHam94} who, in the beginning of the 1990s, solved
an Erd\H{o}s-Heilbronn conjecture which remained open since 1964:

\begin{theirtheorem}[Dias da Silva and Hamidoune]
Let $A$ and $B$ be non-empty sets of the group of prime order $p$.
Then
$$|A\wedge B|\ge \min\{p,|A|+|B|-3\}.$$
\end{theirtheorem}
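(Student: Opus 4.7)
The plan is to apply Alon's Combinatorial Nullstellensatz, which has become the canonical tool for restricted sumset lower bounds. Suppose for contradiction that $|A\wedge B|<\min(p,|A|+|B|-3)$. The boundary case $|A|+|B|-3\ge p$ reduces to showing $A\wedge B=\mathbb{Z}/p\mathbb{Z}$, which follows from Cauchy--Davenport applied after trimming $A$ and $B$ slightly; so I assume $|A|+|B|-3\le p-1$, whence $|A\wedge B|\le|A|+|B|-4$.

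First I would treat the ``generic'' case $|A|\ne|B|$. Pick $E\subseteq\mathbb{Z}/p\mathbb{Z}$ with $A\wedge B\subseteq E$ and $|E|=|A|+|B|-3$, and consider
$$F(x,y)=(y-x)\prod_{c\in E}(x+y-c)\in\mathbb{F}_p[x,y].$$
For every $(a,b)\in A\times B$: either $a=b$, killing the factor $y-x$, or $a+b\in A\wedge B\subseteq E$, killing the product. Hence $F$ vanishes identically on $A\times B$. Since $\deg F=|A|+|B|-2=(|A|-1)+(|B|-1)$, the coefficient of $x^{|A|-1}y^{|B|-1}$ is read off the top-degree part $(y-x)(x+y)^{|A|+|B|-3}$, and a direct binomial expansion gives
$$\binom{|A|+|B|-3}{|A|-1}-\binom{|A|+|B|-3}{|A|-2}=\binom{|A|+|B|-3}{|A|-2}\cdot\frac{|B|-|A|}{|A|-1},$$
which is non-zero in $\mathbb{F}_p$ since $|A|+|B|-3\le p-1$ and $|A|\ne|B|$. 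The Combinatorial Nullstellensatz then contradicts the vanishing of $F$ on $A\times B$.

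The main obstacle is the ``balanced'' case $|A|=|B|=m$, where the coefficient above vanishes identically. My plan is to break the symmetry with an additional linear factor: pick any $b_0\in B$ and try
$$F(x,y)=(y-x)(y-b_0)\prod_{c\in E'}(x+y-c),$$
with $A\wedge B\subseteq E'$ and $|E'|=2m-4$. Vanishing on $A\times B$ now has three possible causes: $b=b_0$ (killing $y-b_0$), $a=b$ (killing $y-x$), or $a+b\in A\wedge B\subseteq E'$ (killing the product). The coefficient of $x^{m-1}y^{m-1}$ is extracted from the top-degree part $y(y-x)(x+y)^{2m-4}$, and a short computation identifies it as $-\binom{2m-4}{m-2}/(m-1)$, which is non-zero modulo $p$ because $2m-3\le p$. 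Applying the Nullstellensatz a second time closes this remaining case.
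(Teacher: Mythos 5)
Your argument is correct in substance, but note that the paper does not prove this statement at all: Theorem B is quoted from Dias da Silva and Hamidoune \cite{DiaHam94}, whose original proof runs through linear algebra on exterior powers (``cyclic spaces for Grassmann derivatives''), and the paper explicitly points to Alon--Nathanson--Ruzsa \cite{AloNatRus96} for the polynomial-method alternative. What you have written is essentially that second, ANR proof: the polynomial $(y-x)\prod_{c\in E}(x+y-c)$ with the coefficient $\binom{n}{k}-\binom{n}{k-1}$ is exactly their construction (in the unbalanced case it even yields the stronger bound $|A|+|B|-2$), and your extra factor $(y-b_0)$ in the balanced case is an in-polynomial repackaging of their trick of deleting one element of $B$ to break the symmetry; the coefficient computations ($\binom{|A|+|B|-3}{|A|-1}-\binom{|A|+|B|-3}{|A|-2}$, and the Catalan-type value $-\frac{1}{m-1}\binom{2m-4}{m-2}$) are right, and both are nonzero mod $p$ because the relevant binomial coefficients $\binom{n}{j}$ with $n\le p-1$ never vanish mod $p$. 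The trade-off between the two routes is the usual one: the Grassmann-derivative proof is self-contained linear algebra over $\mathbb{F}_p$ and generalizes to $h$-fold restricted sums, while the Nullstellensatz proof is shorter and more flexible (it is the route taken by the $A\wedge^S B$ results of Pan--Sun quoted as Theorem E).

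Two small repairs. First, in the regime $|A|+|B|-3\ge p$ your appeal to ``Cauchy--Davenport after trimming'' does not quite deliver $A\wedge B=\mathbb{Z}/p\mathbb{Z}$ (trimming only gets you back to the boundary case); the correct and easier observation, which the paper itself records in the introduction, is that then every $z$ has $\nu(z)\ge |A|+|B|-p\ge 3$ representations, of which at most one is diagonal. Second, the rewritten coefficient $\binom{|A|+|B|-3}{|A|-2}\cdot\frac{|B|-|A|}{|A|-1}$ is meaningless when $|A|=1$; work with the difference $\binom{n}{k}-\binom{n}{k-1}$ directly (it equals $1$ there), or dispose of $\min\{|A|,|B|\}=1$ by hand.
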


Several years later, Alon, Nathanson and Rusza \cite{AloNatRus96} proposed
an alternative proof using the so-called polynomial method, a powerful method
which has then inspired a lot of new results in additive combinatorics.

Another important set of problems in the area is known under the name of
{\em Critical Pair Theory}. Having found a general lower bound for the
cardinality of sumsets, the problem is now to achieve the description
of pairs of sets, the sum of which attains the lower bound.
For instance, Vosper's Theorem describe precisely the pairs of subsets
$(A,B)$ in a group of prime order such that $|A+B|=|A|+|B|-1$.

We now introduce some notation and tools needed in the sequel and formulate
a few basic remarks.

Let $G_0$ denote the subgroup of $G$ composed of elements of order 2
or less, that is,
$$
G_0=\{x\in G:\ 2x=0\}.
$$
We write $L(G)= |G_0|$, the {\em doubling constant} introduced by Lev
in \cite{Lev00}. Notice that if $|G|$ is odd, then $L(G)=1$ whereas
if $|G|$ is even, $L(G)$ is a power of $2$.
It is immediate to notice that
$L(G)$ is
the maximal number of pairwise distinct elements of a group $G$
that can share a common doubling, in other words
$$
L(G) = \max_{t \in G} |\{x\in G:\ 2x=t\}|.
$$

If two sets $A$ and $B$ of $G$ are given, we denote for any $x \in G$,
$$
\nu (x)=|\{(a,b): \ a\in A, \ b\in B,\ a+b=x\}|,
$$
the {\em number of representations function}. When the context will
not make it obvious, we may denote $\nu_{A,B}$ instead of simply $\nu$.
Notice that if $|A|+|B|>|G|+L(G)$, then for any $x\in G$ we have
$$\nu (x)=|A\cap (x-B)|=|A|+|x-B|-|A\cup (x-B)|\ge |A|+|B|-|G|>L(G),$$
whence
$A\wedge B=G$.

Finally, for a set $A \subset G$ and $t\in \Z$, we shall denote
$$
t\cdot A =\{ ta: a \in A \},
$$
and $-A= (-1) \cdot A$. We define the {\em half} of a set $T\subset G$ as  $\mathcal{H}(T)=\{g\in G: \ 2g\in T\}$ and the {\em subgroup
of doubles} in $G$ as
$$
2\cdot G = \{ 2x :\ x \in G \}.
$$
Notice that $|2\cdot G|=|G|/L(G)$.

In a recent paper, Guo \cite{Guo09} studied the problem of
restricted sumsets in Abelian groups in the case when
the cardinality of the sets is large and proved the following result.

\begin{theirtheorem}[Guo]
\label{Guo_theo_bound}
Let $A$ and $B$ be subsets of a finite Abelian group $G$
satisfying $|A|+|B|=|G|+ L(G)$. Then $|A\wedge B|\ge |G|-2$.
\end{theirtheorem}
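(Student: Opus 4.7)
The plan is to analyze the bad set $E = G \setminus (A \wedge B)$ and show $|E| \leq 2$. First I would observe that any $x \in E$ admits only diagonal representations $x = a+a$, forcing $a \in A \cap B$ and $2a = x$. Since the pigeonhole computation in the excerpt gives $\nu(x) \geq |A|+|B|-|G| = L(G)$, and at most $L(G)$ elements of $G$ satisfy $2a = x$, all inequalities become equalities: $\nu(x) = L(G)$ exactly, and the full fibre $\mathcal{H}(\{x\})$ lies in $A \cap B$. The identity $\nu(x) = L(G)$ rewrites as $A \cup (x - B) = G$, equivalently $x \notin A^{c} + B^{c}$. Thus
\[
E \;\subseteq\; \bigl(G \setminus (A^{c} + B^{c})\bigr) \,\cap\, \bigl\{x \in 2 \cdot G : \mathcal{H}(\{x\}) \subseteq A \cap B\bigr\}.
\]

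The main quantitative step is Kneser's theorem applied to $A^{c} + B^{c}$. With stabilizer $H$ and $|A^{c}|+|B^{c}| = |G|-L(G)$, one gets
\[
|A^{c} + B^{c}| \;\geq\; |A^{c} + H| + |B^{c} + H| - |H| \;\geq\; |G| - L(G) - |H|,
\]
so $|E| \leq L(G) + |H|$. In the generic case $L(G) + |H| \leq 2$ (in particular whenever $G$ has odd prime order) this already suffices.

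For the remaining cases I would exploit the ``full fibre'' condition $\mathcal{H}(\{x\}) \subseteq A \cap B$. From $A \cup (x-B) = G$ and $|A \cap (x-B)| = L(G)$ I obtain the identity $B = (x - A^{c}) \sqcup \mathcal{H}(\{x\})$ for every $x \in E$, together with its $A$-counterpart. Comparing these for distinct $x_{1}, x_{2} \in E$ yields $\mathcal{H}(\{x_{1}\}) \subseteq x_{2} - A^{c}$, hence
\[
\mathcal{H}(\{2x_{2} - x_{1}\}) \;=\; x_{2} - \mathcal{H}(\{x_{1}\}) \;\subseteq\; A^{c},
\]
and the symmetric argument gives $\mathcal{H}(\{2x_{2}-x_{1}\}) \subseteq B^{c}$, so this whole coset lies in $A^{c} \cap B^{c}$. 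In addition, for distinct $x_{1}, x_{2} \in E$ the coset $\mathcal{H}(\{x_{1}\}) + \mathcal{H}(\{x_{2}\}) = \mathcal{H}(\{x_{1}+x_{2}\})$ lies in $A \wedge B$, which shows that $E$ contains no 3-term arithmetic progression. Assuming for contradiction that $|E| \geq 3$, the $|E|$ cosets $\mathcal{H}(\{x_{i}\}) \subseteq A\cap B$ and the extra cosets $\mathcal{H}(\{2x_{i}-x_{j}\}) \subseteq A^{c}\cap B^{c}$, combined with the identity $|A \cap B| = |A^{c} \cap B^{c}| + L(G)$ and the hypothesis $|A|+|B| = |G|+L(G)$, force $|G|$ to be very large in terms of $L(G)$ and impose strong structural conditions on $A$ and $B$. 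I expect the main technical difficulty to be handling the possible collisions among the $\mathcal{H}(\{2x_{i}-x_{j}\})$ (which can occur when $G/G_{0}$ has $3$-torsion); the cleanest way to resolve this is to descend to the quotient $G/G_{0}$, where doubling becomes a bijection, and run a Kneser-type argument on the images of $A^{c}$ and $B^{c}$ there.
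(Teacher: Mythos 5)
Your opening moves are correct and, modulo notation, reproduce in the special case $S=\{0\}$ the machinery the paper builds in Section~3: the observation that an exception $x$ forces $\nu(x)=L(G)$, $A\cup(x-B)=G$ and $\mathcal{H}(\{x\})\subseteq A\cap B$ is Lemma~\ref{exception} plus Corollary~\ref{coroxxl}, your identity $B=(x-A^{c})\sqcup\mathcal{H}(\{x\})$ is an equivalent packaging of Lemma~\ref{clau}, and the containments $\mathcal{H}(\{2x_{2}-x_{1}\})\subseteq A^{c}\cap B^{c}$ are correct. The genuine gap is the endgame. ``$E$ has no $3$-term AP'' together with $|A\cap B|=|A^{c}\cap B^{c}|+L(G)$ does not bound $|E|$ by $2$: AP-free sets can be large, the coset count pushes both sides of that identity in the same direction, and no contradiction follows; the proposed rescue of descending to $G/G_{0}$ ``where doubling becomes a bijection'' is false whenever $G$ has elements of order $4$; and the Kneser bound $|E|\le L(G)+|H|$ controls nothing because the stabilizer $H$ is not controlled. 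So as written the proof does not close.

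What is missing is one more use of the base exception, and it is exactly Lemma~\ref{PofTheorem7}~(ii) of the paper (whose case $S=\{0\}$ is the self-contained proof of Guo's theorem alluded to after Theorem~\ref{general}). Normalize so that $0\in E$ and suppose $z,z'\in E$ are distinct and nonzero. Your identities give $z,z'\notin A\cup B$: indeed $G_{0}=\mathcal{H}(\{0\})\subseteq B$ is disjoint from $\mathcal{H}(\{z\})$, hence $G_{0}\subseteq z-A^{c}$, i.e.\ $z\notin A$, and symmetrically $z\notin B$. Therefore $z'-z\in(z'-A^{c})\cap(z'-B^{c})\subseteq B\cap A$, and likewise $z-z'\in A\cap B$. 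Feeding the representation $0=(z-z')+(z'-z)$ into the fact that $0$ is an exception forces $z-z'=z'-z$, i.e.\ $z-z'\in G_{0}$. But then $2z'-z=z'+(z'-z)=z'+(z-z')=z$, so your own containment $\mathcal{H}(\{2z'-z\})\subseteq A^{c}\cap B^{c}$ collides with $\mathcal{H}(\{z\})\subseteq A\cap B$. (Equivalently: for $b$ with $2b=z$, the elements $b$ and $b+(z'-z)$ are two distinct members of $\mathcal{H}(\{z\})\subseteq A\cap B$ whose sum is $z'$, so $z'\in A\wedge B$.) Hence $|E|\le 2$. With this step your argument is complete and coincides in substance with the paper's route through Lemmas~\ref{clau} and~\ref{PofTheorem7}; the Kneser detour can be dropped entirely.
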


In the same paper \cite{Guo09}, Guo also gave a complete description of
the pairs of subsets $(A,B)$ such that $|A|+|B|=|G|+L(G)$ and
$|A\wedge B|=|G|-2$. This is the content of the next theorem.

\begin{theirtheorem}[Guo]
\label{Guo_theo_charac}
Let $A$ and $B$ be subsets of a finite Abelian group $G$.
Then, $|A|+|B|=|G|+ L(G)$ and $|A\wedge B|= |G|-2$ if and only if
there exist two distinct elements $a,b\in A\cap B$ satisfying
\begin{itemize}
\item[(i)] the order $d$ of the subgroup $H=\subgp{2(b-a)}$
is an odd integer greater than $1$;
\item[(ii)] there exist distinct elements $x_1,\ldots ,x_k,x_{k+1},\ldots,
x_l,x_{l+1},\ldots, x_m$ in $G \setminus (G_0+H)$, where
$$
m=\frac{|G|/d-|G_0|}{2}
$$
and $0\le k,l\le m$, such that
\begin{eqnarray*}
&&\hspace{-1cm}G\setminus (G_0+H) =\bigcup_{i=1}^m((a+x_i+H)\cup (a-x_i+H)), \\
&&\hspace{-1cm}A=
a+((\{0,b-a,3(b-a),\ldots, (d-2)(b-a)\}+G_0)\cup (\{x_1,\ldots,x_k,\pm x_{k+1},\ldots, \pm x_l\}+H)), \\
&&\hspace{-1cm}B =
a+((\{0,b-a,3(b-a),\ldots, (d-2)(b-a)\}+G_0)\cup (\{x_1,\ldots,x_k,\pm x_{l+1},\ldots, \pm x_m\}+H)).
\end{eqnarray*}
\end{itemize}
\end{theirtheorem}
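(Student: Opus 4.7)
My approach splits into the easy sufficiency and the substantial necessity.

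\emph{Sufficiency.} Given the prescribed structure, I would plug into the definitions and verify both size formulas by direct count. The ``diagonal'' piece $\{0, b-a, 3(b-a), \ldots, (d-2)(b-a)\} + G_0$ has $\tfrac{d+1}{2}|G_0|$ elements (using that $d$ is odd), and combined with the $H$-coset pieces and the identity $m = (|G|/d - |G_0|)/2$, the equality $|A| + |B| = |G| + L(G)$ follows. For the restricted sumset, one needs to check that the only representations of $2a$ and $2b$ in $A \times B$ are the diagonal pairs $(a,a)$ and $(b,b)$, whereas every other $x \in G$ admits at least one non-diagonal representation; the pairing of $H$-cosets into $\{x_i+H, -x_i+H\}$ provides the required flexibility outside the diagonal part.

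\emph{Necessity.} The starting observation is that if $x \notin A \wedge B$, then every pair $(\alpha,\beta) \in A \times B$ with $\alpha + \beta = x$ must be diagonal, so $\nu(x) \le L(G)$. Combined with $\nu(x) \ge |A|+|B|-|G|=L(G)$ from the introduction, this forces $\nu(x) = L(G)$ and the full preimage $2^{-1}(x) \subseteq A \cap B$. Since $|G \setminus (A \wedge B)| = 2$, I may pick $a, b \in A \cap B$ with the two missing values being $\{2a, 2b\}$. Translating by $-a$, I may assume $a=0$, so that $T := G \setminus (A \wedge B)=\{0, 2b\}$ and $G_0 \cup (b+G_0) \subseteq A \cap B$. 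Next, I set $H = \langle 2b \rangle$ and aim to show $d:=|H|$ is odd and larger than $1$. That $d>1$ is immediate from $2b\ne 0$. For oddness, I would argue by contradiction: if $d$ were even, then $db$ would be either $0$ (forcing $b\in H$) or a nonzero element of $G_0$, and in either case one can exhibit two distinct elements of $A \cap B$ summing into $T$, contradicting $T \cap (A \wedge B) = \emptyset$.

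Once $H$ is established, I would build the diagonal part of $A \cap B$ by induction: starting from $\{0, b\} + G_0 \subseteq A \cap B$ and using repeatedly that distinct elements of $A \cap B$ must sum outside $T$, I would force the odd multiples $3b, 5b, \ldots, (d-2)b$, together with their $G_0$-translates, to lie in $A \cap B$. The remaining elements of $A$ and $B$ then live in $G \setminus (G_0 + H)$; since $H$ has odd order, the involution $x \mapsto -x$ acts without fixed points on the $H$-cosets in this set, pairing them into $m = (|G|/d - |G_0|)/2$ pairs $\{x_i+H, -x_i+H\}$. A case analysis on each pair, driven by the requirement that $A + B = G$ together with $|A \wedge B| = |G|-2$, produces precisely the three index ranges in the statement. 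The main obstacle I anticipate is the combined step of establishing that $|H|$ is odd and inductively pinning down the diagonal part: ruling out parasitic configurations requires a careful simultaneous analysis of $\nu$ on the set $\{0, 2b\} + H$, not just on $T$ itself.
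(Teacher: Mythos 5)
You should first be aware that the paper offers no proof of this statement: it is Theorem~D, quoted verbatim from Guo's article \cite{Guo09} and used as a black box (e.g.\ in the induction base of Theorem \ref{general} and in deriving Theorem \ref{characterization_even}). There is therefore no in-paper argument to compare yours against; I can only judge the proposal on its own terms. Your overall skeleton is the standard one and matches the known strategy: sufficiency by direct counting (your count $\frac{d+1}{2}|G_0|+(2l-k)d+\frac{d+1}{2}|G_0|+(2m-2l+k)d=|G|+L(G)$ is correct), and necessity by observing that an exception $x$ forces $\nu(x)=L(G)$ with the entire fiber $\{y:2y=x\}$ contained in $A\cap B$, so that distinct elements of $A\cap B$ may never sum into the exceptional set $T=\{0,2b\}$.

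The necessity direction, however, has two genuine gaps. First, your argument that $d$ is odd does not go through as stated. If $d$ is even, $db$ is indeed the nonzero involution of $H$, hence lies in $G_0\subset A\cap B$; but the two distinct elements of $A\cap B$ allegedly summing into $T$ are never exhibited, and the obvious candidates fail: for instance $db+(2-d)b=2b$ requires $(2-d)b\in B$, whereas the identity $A\cap(2b-B)=b+G_0$ shows $(2-d)b\notin B$ precisely because $(d-1)\cdot 2b\neq 0$. So the even case yields consistent-looking constraints rather than an immediate contradiction; ruling it out requires first propagating the odd multiples $3b,5b,\dots$ into $A\cap B$ and then closing a counting loop, which is the part you yourself flag as the ``main obstacle'' but do not resolve. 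Second, the step in which the complement $G\setminus(G_0+H)$ organizes into full $H$-cosets of $A$ and of $B$, paired as $\{x_i+H,-x_i+H\}$ with exactly the three index regimes $[1,k]$, $(k,l]$, $(l,m]$, is asserted but not derived; one needs an argument (in Guo's treatment, a careful use of $\nu(z)\ge L(G)$ for every $z\in G$ together with the exact value $|A|+|B|=|G|+L(G)$) showing that $A$ and $B$ are unions of $H$-cosets there and that each paired coset contributes to $A$ or to $B$ in one of only three admissible patterns. As it stands the proposal is a plausible roadmap rather than a proof of the harder implication.
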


We want to point out that, as it was proved in \cite{Guo09}, if
two sets $A$ and $B$ are of the form described in Theorem
\ref{Guo_theo_charac}, then the two {\em $A\wedge B$-exceptions} --
by which we mean elements of $(A+B) \setminus (A \wedge B)$ --
are precisely $2a$ and $2b$.

In this article, we deal with a generalization of restricted sumsets
(introduced in \cite{Sun})
in which a new set appears. Let $A,B$ and $S$ be non-empty subsets of $G$.
The {\em restricted sumset of $A$ and $B$ with respect to $S$} is defined
by
$$
A\wedge^S B= \{ a+b:\ a\in A,\ b\in B \ \mbox{and}\  a-b\notin S\}.
$$
Notice that, when $S=\{0\}$, this sumset corresponds to the classical
restricted sumset of two sets. Partial results to the problem of
estimating $|A\wedge^S B|$ from below are given recently in
\cite{GuoSun09,PanSun02,PanSun06}. In particular, Pan and Sun used
the polynomial method to study a conjecture of Lev. As a corollary
they proved \cite{PanSun06} the following result.

\begin{theirtheorem}[Pan and Sun]
\label{PanSun06}
Let $G$ be an Abelian group and let $A,B$ and $S$ be finite non-empty
subsets of $G$ such  that $A\wedge^SB$ is not empty.
\begin{itemize}
\item[(i)] If $G$ is torsion-free or elementary Abelian, then
$$
|A\wedge^SB|\ge |A|+|B|-|S|-\min_{z\in A\wedge^SB}\nu(z).
$$
\item[(ii)] If the torsion part of $G$ is cyclic, then
$$
|A\wedge^SB|\ge |A|+|B|-2|S|-\min_{z\in A\wedge^SB}\nu(z).
$$
\end{itemize}
\end{theirtheorem}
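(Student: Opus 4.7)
My plan is to follow the polynomial method, in the spirit of Alon--Nathanson--Ruzsa's reworking of Dias da Silva--Hamidoune. Write $C = A \wedge^S B$, set $m_0 = \min_{z \in C} \nu(z)$, and fix an element $z_0 \in C$ attaining this minimum. I would argue by contradiction, supposing that $|C| < |A|+|B|-|S|-m_0$ in case (i), and similarly $|C| < |A|+|B|-2|S|-m_0$ in case (ii).

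In case (i), the natural candidate is
$$
P(x,y) \;=\; \prod_{s \in S}(x-y-s)\,\prod_{z \in C \setminus \{z_0\}}(x+y-z).
$$
A direct check shows that on $A \times B$ the polynomial $P$ can only fail to vanish on the set
$$
T \;=\; \{(a,b) \in A \times B : a+b = z_0,\ a-b \notin S\},
$$
of cardinality at most $\nu(z_0)=m_0$. Indeed, if $P(a,b)\neq 0$, then $a-b\notin S$ and $a+b\notin C\setminus\{z_0\}$; but $a-b\notin S$ already forces $a+b\in C$, so $a+b=z_0$. Multiplying $P$ by $Q(x,y)=\prod_{(\alpha,\beta)\in T}(x-\alpha)$, of degree at most $m_0$, produces a polynomial $PQ$ that vanishes on all of $A \times B$ and has degree at most $|S|+|C|-1+m_0\le|A|+|B|-2$ under the contradictory hypothesis.

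The next step is to invoke the Combinatorial Nullstellensatz. After padding $PQ$ with extra factors of the form $(x+y-c)$, $c \in G$, to bring the total degree up to exactly $(|A|-1)+(|B|-1)$, a contradiction follows provided the coefficient of $x^{|A|-1}y^{|B|-1}$ in the padded polynomial is nonzero. This coefficient is a signed combination of products of binomial coefficients coming from the expansions of $(x-y)^{|S|}$, of powers of $(x+y)$, and of $\prod(x-\alpha_i)$; in the torsion-free case the computation happens in $\Z$ (or in $\mathbb Q$) and is direct, while in the elementary Abelian case it takes place in $\Z/p\Z$ and the non-vanishing rests on elementary inequalities among the exponents.

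The main obstacle, to my eye, is case (ii). When the torsion part of $G$ is cyclic of composite order, the binomial identities that work in case (i) can vanish after reduction modulo the torsion exponent, so a more robust polynomial is needed; the extra factor of $2$ in front of $|S|$ in the target bound suggests replacing $\prod_{s \in S}(x-y-s)$ by the symmetrised product $\prod_{s \in S}(x-y-s)(x-y+s)$, which is a polynomial in $(x-y)^2$. This doubles the degree contribution of $|S|$ but allows the leading-coefficient identity to be read off from expressions involving only squares of elements of $S$, hence lying in the $2$-divisible part of $G$, where the identity survives the cyclic torsion. Designing this refined polynomial and verifying the non-vanishing of its leading coefficient is the technically delicate part; the overall skeleton---construct the vanishing polynomial, kill the residual set $T$, apply Combinatorial Nullstellensatz---is the same as in case (i).
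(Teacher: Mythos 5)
First, a caveat: this theorem is not proved in the paper at all -- it is imported verbatim from Pan and Sun \cite{PanSun06} as background -- so there is no internal proof to compare you against, and I can only assess your sketch on its own terms. Your skeleton (build a polynomial that vanishes on $A\times B$ off a residual set $T$ with $|T|\le\min_{z}\nu(z)$, kill $T$ by $\prod_{(\alpha,\beta)\in T}(x-\alpha)$, invoke the Combinatorial Nullstellensatz) is indeed the Alon--Nathanson--Ruzsa/Pan--Sun strategy, and your analysis of where $P$ can fail to vanish is correct. But a central idea is missing: the Nullstellensatz requires $A,B,S$ to sit inside a field, whereas $G$ is an abstract Abelian group, and the structural hypotheses in (i) and (ii) exist precisely to supply an embedding. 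A torsion-free group embeds additively into a field of characteristic $0$; an elementary Abelian $p$-group of rank $n$ embeds additively into $\mathbb{F}_{p^n}$ (not into $\Zp$, as your sketch suggests); and a group with cyclic torsion embeds \emph{multiplicatively} into $F^{\times}$ for a suitable field $F$. This last embedding is where the factor $2$ in (ii) really comes from: multiplicatively the sum $a+b$ becomes a product, the factors recording membership in $C$ become $(xy-z)$ of total degree $2$, the restriction becomes $\prod_{s\in S}(x-sy)$ whose top coefficients are elementary symmetric functions $e_i(S)$ that may vanish, and one pays up to $|S|$ more by adjusting the sets so as to land on an index $i$ with $e_i(S)\neq 0$. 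Your proposed symmetrisation $\prod_{s\in S}(x-y-s)(x-y+s)$ is not the mechanism and cannot repair anything by itself, since it still presupposes an additive embedding that a group with composite cyclic torsion does not admit.

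Second, even in case (i) the non-vanishing of the coefficient of $x^{|A|-1}y^{|B|-1}$ is not ``direct.'' After your padding, the top-degree form is $(x-y)^{|S|}(x+y)^{N}x^{|T|}$, and the relevant coefficient is a signed sum of products of binomial coefficients that can vanish even over $\mathbb{Q}$: already for $|S|=1$ it reduces to $\binom{N}{a-1}-\binom{N}{a}$, which is zero whenever $N=2a-1$. (This is exactly the phenomenon forcing the loss from $|A|+|B|-2$ to $|A|+|B|-3$ in the Erd\H{o}s--Heilbronn problem when $|A|=|B|$.) A complete argument must therefore choose the degrees, or trim $A$ or $B$, so that the targeted coefficient is provably nonzero, and in characteristic $p$ one must additionally control the vanishing of $\binom{|S|}{i}$ modulo $p$. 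As it stands, your proposal is a plausible plan with the right architecture, but both the passage from $G$ to a field and the leading-coefficient computation -- the two points where all the actual work of \cite{PanSun06} lies -- are unaddressed.
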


In \cite{GuoSun09}, Guo and Sun used a variation of Tao's method
\cite{Tao05} in harmonic analysis to prove the next theorem, which
is a generalization of Theorem B.

\begin{theirtheorem}[Guo and Sun]
Let $A,B$ and $S$ be non-empty subsets of the group of prime order $p$. Then
$$
|A\wedge^S B|\geq \min\{p,|A|+|B|-2|S|-1\}.
$$
\end{theirtheorem}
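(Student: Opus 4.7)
The plan is to use the polynomial method based on the Combinatorial Nullstellensatz, adapting the Alon--Nathanson--Ruzsa proof of Erd\H{o}s--Heilbronn so as to accommodate the restriction set $S$. We may assume $|A\wedge^SB|<p$, as otherwise the bound is trivial. The strategy is to find subsets $A'\subseteq A$ and $B'\subseteq B$ with $|A|+|B|-|A'|-|B'|\leq|S|$ for which the polynomial method yields the auxiliary inequality $|A'\wedge^SB'|\geq|A'|+|B'|-|S|-1$. Since $A'\wedge^SB'\subseteq A\wedge^SB$, this immediately gives the desired bound $|A\wedge^SB|\geq|A|+|B|-2|S|-1$.

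For a given candidate pair $(A',B')$, suppose for contradiction that $|A'\wedge^SB'|\leq|A'|+|B'|-|S|-2$, and pick any set $C\supseteq A'\wedge^SB'$ of cardinality exactly $|A'|+|B'|-|S|-2$. Consider
$$
P(x,y)=\prod_{s\in S}(x-y-s)\cdot\prod_{c\in C}(x+y-c),
$$
a polynomial of total degree $(|A'|-1)+(|B'|-1)$. By the Combinatorial Nullstellensatz, if the coefficient of $x^{|A'|-1}y^{|B'|-1}$ in $P$ is nonzero modulo $p$, then some $(a,b)\in A'\times B'$ satisfies $P(a,b)\neq 0$; equivalently $a-b\notin S$ and $a+b\notin C$, contradicting $a+b\in A'\wedge^SB'\subseteq C$. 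Only the top-degree terms of the two factors contribute to this coefficient, so via the substitution $y=tx$ it equals
$$
[t^{|B'|-1}]\,(1-t)^{|S|}(1+t)^{|A'|+|B'|-|S|-2}=\sum_{k\geq 0}(-1)^k\binom{|S|}{k}\binom{|A'|+|B'|-2|S|-2}{|B'|-1-2k}\pmod p.
$$

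The main obstacle is to ensure that this coefficient is nonzero modulo $p$ for some admissible $(A',B')$. For the naive choice $A'=A,\ B'=B$ the sum can vanish modulo $p$, as happens already in Erd\H{o}s--Heilbronn when $|A|=|B|$. The technical heart of the argument is a Lucas-style $p$-adic analysis showing that, by deleting a total of at most $|S|$ elements from $A\cup B$, one can always reach a pair $(A',B')$ where the alternating sum above is nonzero modulo $p$. The factor $2|S|$ in the final bound then decomposes naturally into the $|S|$ coming from the degree of $\prod_{s\in S}(x-y-s)$ plus the further $|S|$ ``reduction budget'' used to evade the vanishing of the coefficient.
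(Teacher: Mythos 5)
The paper does not prove this statement: it is quoted from Guo and Sun, who (as the surrounding text notes) obtained it by ``a variation of Tao's method in harmonic analysis,'' i.e.\ an uncertainty-principle argument, not the polynomial method. So your route is genuinely different from the source's --- but it is not complete, and the missing piece is exactly the known obstruction to a Nullstellensatz proof of this theorem.

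The set-up is fine: with $C\supseteq A'\wedge^S B'$ of size $|A'|+|B'|-|S|-2$, the polynomial $P(x,y)=\prod_{s\in S}(x-y-s)\prod_{c\in C}(x+y-c)$ has degree $(|A'|-1)+(|B'|-1)$, and the relevant coefficient is indeed $[t^{|B'|-1}](1-t)^{|S|}(1+t)^{|A'|+|B'|-|S|-2}$ (your rewriting via $(1-t^2)^{|S|}(1+t)^{|A'|+|B'|-2|S|-2}$ is correct). The gap is the sentence beginning ``The technical heart of the argument is a Lucas-style $p$-adic analysis showing that, by deleting a total of at most $|S|$ elements\dots''. This is precisely the statement that needs proof, and you give no argument for it. For $|S|=1$ the coefficient is $\frac{(|A'|+|B'|-3)!}{(|A'|-1)!\,(|B'|-1)!}(|A'|-|B'|)$, so a budget of one deletion visibly suffices (this is Alon--Nathanson--Ruzsa, and explains the $-3$ in Theorem B). But for $|S|\geq 2$ the coefficient is an alternating sum of binomial coefficients (a Krawtchouk-type value) whose nonvanishing modulo $p$ over the $\binom{|S|+2}{2}$ admissible pairs $(|A'|,|B'|)$ is not established by any Lucas-type argument I am aware of; it is exactly this loss of control that forced Pan and Sun to settle for the weaker bound with the extra $-\min_z\nu(z)$ term (Theorem E) and led Guo and Sun to switch to Tao's method. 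Unless you can actually prove the nonvanishing claim, the proof does not go through.

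Two smaller points: (a) you should first truncate $A$ or $B$ so that $|A|+|B|\leq p+2|S|+1$, since otherwise the set $C$ you require may need more than $p$ elements; monotonicity of $A\wedge^S B$ in $A$ and $B$ makes this harmless, but it should be said. (b) Reducing to $|A\wedge^S B|<p$ is legitimate, but phrase it as a case split rather than an assumption on the conclusion.
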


In this paper, applying techniques similar to those used  in \cite{Chi02,GalGre} and \cite{Guo09}, we study the restricted sumset of two large sets
$A$ and $B$ with respect to a set $S$, in general finite Abelian
groups. Given non-empty sets $A,B$ and $S$, we first introduce
a generalization of the doubling constant $L(G)$ which depends on
the set $S$, that we denote $L_S$.
It is easy to see that if $|A|+|B|>|G|+L_S$ then  $A\wedge^S B=G$ (Lemma \ref{phS}) and
as our first principal result we show that if $|A|+|B|=|G|+L_S$ then $|A\wedge^S B|\ge |G|-2|S|$  (Theorem \ref{general}).
We also characterize the triples of sets $(A,B,S)$ such that
$|A|+|B|=|G|+L_S$ and $|A\wedge^S B|= |G|-2|S|$
(Theorems \ref{characterization_even} and
\ref{characterization_general}).
Moreover, in this case, we also provide the structure of the set
$G\setminus (A\wedge^S B)$.

The organization the paper is the following. In Section 2,
we introduce the terminology and some preliminary results.
The key-point in this section is Lemma \ref{clau}.
Section 3 is devoted to the proof of the lower bound.
We also give some examples in order to show that our bound is tight.
In Section 4, we characterize the critical sets
in the important special case $L_S=|S|\ L(G)$. In particular, this gives
the characterization the critical sets for Abelian groups of
odd order. Finally, in Section 5, we extend the characterization
to the case $L_S<|S|\ L(G)$, provided some restriction holds.

\section{Terminology and preliminaries}

We first start with two basic results. The first one was
baptised Prehistorical lemma by the first-named author.

\begin{theirlemma}[Folkloric prehistorical lemma]
\label{ph}
Let $A$ and $B$ be subsets of a finite group $G$.
If $|A|+|B|>|G|$ then $A+B=G$.
\end{theirlemma}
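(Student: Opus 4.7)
The plan is to use the pigeonhole principle on a suitable translate. Fix an arbitrary element $z \in G$; the goal is to produce $a \in A$ and $b \in B$ with $a+b=z$, which is the same as finding a common element of $A$ and the set $z-B = \{z-b : b \in B\}$. Since the map $b \mapsto z-b$ is a bijection on $G$, we have $|z-B|=|B|$.

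Now both $A$ and $z-B$ are subsets of $G$, and by the hypothesis
\[
|A| + |z-B| = |A| + |B| > |G|.
\]
Hence $A$ and $z-B$ cannot be disjoint, so there exists $a \in A \cap (z-B)$; writing $a = z-b$ with $b \in B$ gives $a+b = z$, so $z \in A+B$. Since $z \in G$ was arbitrary, we conclude $A+B = G$. There is no real obstacle here: the whole content of the lemma is the translation trick combined with one pigeonhole inequality, and the argument works verbatim in any finite group (abelian or not), as only the fact that left/right translation is a bijection is used.
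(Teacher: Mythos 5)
Your proof is correct and is exactly the standard argument: the paper states this lemma without proof as folklore, but the identical translation-plus-pigeonhole computation $|A\cap(z-B)|\ge |A|+|B|-|G|$ appears verbatim in its proofs of Lemma~\ref{phS} and Lemma~\ref{exception}. Nothing further is needed.
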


The second result we shall need is Kneser's Theorem \cite{Kneser},
see also \cite{Nat96}.
It has a lot of applications in additive and combinatorial number theory.
We will use it as a key-tool in the characterization of critical sets
in groups of even order.

\begin{theirtheorem}[Kneser]
\label{kneser}
Let $G$ be an Abelian group and let $A$ and $B$ be finite,
non-empty subsets of $G$.
Let $H=H(A+B)=\{g\in G: \ g+A+B=A+B\}$ be the stabilizer of $A+B$.
If $|A+B|<|A|+|B|$ then $|A+B|=|A+H|+|B+H|-|H|$.
\end{theirtheorem}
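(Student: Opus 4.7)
The plan is to follow the classical route via Dyson $e$-transforms and induction on $|B|$. The key tool is the $e$-transform: for an element $e \in G$, set $A_e = A \cup (B+e)$ and $B_e = B \cap (A-e)$. A brief inclusion-exclusion gives $|A_e| + |B_e| = |A| + |B|$, and checking elementwise shows $A_e + B_e \subseteq A + B$. When $e = a - b$ with $a \in A$ and $b \in B$, one has $b \in B_e$, so $B_e$ is non-empty. Thus the transform replaces $(A, B)$ with a pair of the same total size whose sumset is no larger than $A + B$.

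The induction base $|B| = 1$ is immediate: $A + B$ is a translate of $A$, so $H$ equals the stabilizer of $A$, and $|A + H| + |B + H| - |H| = |A| + |H| - |H| = |A| = |A + B|$. For the inductive step, I would distinguish two cases. Either there exists a pair $(a, b) \in A \times B$ such that $B_{a-b}$ is a proper non-empty subset of $B$, in which case the inductive hypothesis applies to $(A_{a-b}, B_{a-b})$ since $|B_{a-b}| < |B|$; or, for every such $(a, b)$, one has $B \subseteq A - (a - b)$, in which case fixing any $b_0 \in B$ forces $B - b_0$ to lie in the stabilizer of $A$, so $A$ is a union of cosets of $\subgp{B - b_0}$ and a direct computation yields the desired equality.

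The main obstacle will be that under the $e$-transform the stabilizer $H(A_e + B_e)$ may strictly contain $H = H(A + B)$, so the inductive hypothesis cannot be invoked naively for the equality statement. The standard remedy is first to prove by induction alone the weaker inequality form of Kneser's theorem, $|A + B| \geq |A + H| + |B + H| - |H|$, and then to deduce the equality under the hypothesis $|A + B| < |A| + |B|$ by descending to $G / H$. Setting $\bar A = (A + H)/H$ and $\bar B = (B + H)/H$, the hypothesis gives $|\bar A + \bar B| \leq |\bar A| + |\bar B| - 1$, while the inequality form of Kneser applied in $G/H$, where the stabilizer of $\bar A + \bar B$ is trivial, provides the reverse inequality. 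Multiplying through by $|H|$ and using $A + B = (A + H) + (B + H)$ (since $H$ stabilizes $A + B$) then yields the claimed equality.
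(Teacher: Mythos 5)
The paper offers no proof of this statement: Kneser's theorem is imported as a black box, with references to Kneser's original article and to Nathanson's book, so there is nothing internal to compare your argument against and it must be judged on its own. The easy parts of your outline are correct: the transform identities $|A_e|+|B_e|=|A|+|B|$, $A_e+B_e\subseteq A+B$ and $b\in B_{a-b}$; the base case $|B|=1$; the degenerate case where no $e$-transform shrinks $B$ (which indeed forces $B-B$ into the stabilizer of $A$, whence $A+B=A+b_0$ and the equality is immediate); and the final descent from the inequality form to the equality form via $G/H$ together with the divisibility of $|A+B|$, $|A+H|$ and $|B+H|$ by $|H|$.

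The genuine gap is in the main inductive case, and the remedy you propose does not close it. You correctly identify that $H(A_e+B_e)$ may strictly contain $H(A+B)$, but retreating to the inequality form $|A+B|\ge |A+H|+|B+H|-|H|$ (equivalently: trivial stabilizer implies $|A+B|\ge |A|+|B|-1$) does not avoid the problem, because its inductive step produces exactly the same mismatched subgroup. Concretely, applying the inductive hypothesis to $(A_e,B_e)$ yields only $|A+B|\ge |A_e+B_e|\ge |A|+|B|-|H_e|$ with $H_e=H(A_e+B_e)$; when $H_e$ is nontrivial while $H(A+B)$ is trivial, this is strictly weaker than the bound $|A|+|B|-1$ that the induction must deliver, and nothing in your sketch converts one into the other. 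The missing content --- which is where essentially all of the difficulty of Kneser's theorem lives --- is the analysis of how $A+B$ meets the cosets of $H_e$: since $A+B$ is not $H_e$-periodic, some coset $c+H_e$ is only partially contained in $A+B$; one writes $c=a+b$, studies the sets $A\cap (a+H_e)$ and $B\cap (b+H_e)$ inside $c+H_e$, and a delicate counting and minimality argument (several pages in Nathanson or Mann) recovers the lost $|H_e|-1$ elements. Without that step the proposal is an outline of the routine parts of the proof only.
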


Given $z\in G$ and a non-empty subset $S\subset G$, we define
$$
L_S(z)=|\{(x,y):\  x,y\in G,\ x+y=z \ \mbox{and}\  x-y\in S\}|
$$
and
$$
L_S = \max_{z \in G} L_S (z).
$$
The mean-value of $L_S(z)$ on $G$ is easy to compute since
\begin{eqnarray*}
\frac{1}{|G|} \sum_{z \in G} L_S(z) & = &
\frac{1}{|G|} \sum_{z \in G} |\{(x,y):\  x,y\in G,\ x+y=z \ \mbox{and}\  x-y\in S\}|\\
		&=&  \frac{1}{|G|}  |\{(x,y):\  x,y\in G,\  \mbox{such that }\  x-y\in S\}|\\
& =&  \frac{1}{|G|}\ |S| |G| = |S|.
\end{eqnarray*}
Therefore, we must have $L_S \geq |S| \geq 1$.

The next lemma will be useful
for further reference. Notice that $L_S(z)=|\{y: \ z-2y\in S\}|.$

\begin{lemma}
\label{L_S}
Let $S$ be a finite non-empty subset of an Abelian group $G$ and let
$z\in G$. Then
\begin{itemize}
\item[(i)]  We have
$$
L_S(z)=
|S \cap (z+2\cdot G) |\ L(G).
$$
\item[(ii)]
In particular, $L_S(z)$ is a multiple of $L(G)$, less than or equal to
$|S|L(G)$.
\item[(iii)] In particular, $L_S(0)=|S\cap 2\cdot G|\ L(G)$.
\item[(iv)]
In particular,
$$
L_S= m\ L(G),
$$
for some integer $m$ satisfying $1 \leq m \leq |S|$.
\item[(v)] If $L_S= |S|\ L(G)$, then $S$ is included in a coset
modulo $2 \cdot G$.
\item[(vi)] If $L_S(0)= L_S= |S|\ L(G)$, then $S$ is included in $2 \cdot G$.
\end{itemize}
\end{lemma}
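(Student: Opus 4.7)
The plan is to parametrise the pairs $(x,y)$ counted by $L_S(z)$ through the set $S$. Given $z$ and a fixed $s\in S$, the system $x+y=z$ and $x-y=s$ is equivalent to $2x=z+s$ together with $y=z-x$. Hence the number of $(x,y)$ with $x+y=z$ and $x-y=s$ equals the number of solutions of $2x=z+s$, which is either $0$ (when $z+s\notin 2\cdot G$) or exactly $L(G)$ (when $z+s\in 2\cdot G$, since the fibres of the doubling map $G\to 2\cdot G$ all have cardinality $L(G)=|G_0|$).

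Summing over $s$ then gives
\[
L_S(z)=\sum_{s\in S}|\{x\in G:2x=z+s\}|=L(G)\cdot|\{s\in S:z+s\in 2\cdot G\}|.
\]
Now $z+s\in 2\cdot G$ is equivalent to $s\in -z+2\cdot G$, and since $2z\in 2\cdot G$ the cosets $-z+2\cdot G$ and $z+2\cdot G$ coincide. This yields part (i). Parts (ii), (iii), (iv) are immediate corollaries: (ii) follows from $|S\cap(z+2\cdot G)|\le|S|$ and $|S\cap(z+2\cdot G)|\in\mathbb{N}$, (iii) is just (i) at $z=0$, and (iv) follows from (ii) upon taking the maximum over $z$, noting also that $L_S\ge|S|\ge 1$ was already established (so $m\ge1$).

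For (v), if $L_S=|S|L(G)$ then by (i) there is some $z_0\in G$ with $|S\cap(z_0+2\cdot G)|=|S|$, which forces $S\subset z_0+2\cdot G$, i.e.\ $S$ lies in a single coset modulo $2\cdot G$. For (vi), the additional hypothesis $L_S(0)=L_S=|S|L(G)$ combined with (iii) gives $|S\cap 2\cdot G|=|S|$, hence $S\subset 2\cdot G$. There is no serious obstacle here: the entire lemma reduces to the observation about fibres of the doubling map, and everything else is bookkeeping.
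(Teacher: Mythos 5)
Your proof is correct and follows exactly the argument the paper intends: the paper states Lemma \ref{L_S} without proof, offering only the hint that $L_S(z)=|\{y:\ z-2y\in S\}|$, which is the same fibre-of-the-doubling-map count you carry out (parametrised by $x$ rather than $y$). All the deductions (ii)--(vi) are correctly reduced to (i), and your use of the previously established inequality $L_S\ge |S|\ge 1$ for part (iv) is legitimate.
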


The next result can be thought as a generalization of 
 Lemma \ref{ph}:
\begin{lemma}
\label{phS}
Let $A,B$ and $S$ be non-empty subsets of a finite Abelian group $G$.
If $|A|+|B|> |G|+L_S$ then $A\wedge^S B=G$.
\end{lemma}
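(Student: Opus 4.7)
The plan is to mimic the pigeonhole computation already sketched in the introduction for the ordinary restricted sumset, but separating the count of representations of an element $z$ as $a+b$ from the count of those representations which happen to satisfy $a-b\in S$.

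Fix an arbitrary $z\in G$. First I would write $\nu(z)=|A\cap(z-B)|$ and apply inclusion-exclusion in the ambient group to get
\begin{equation*}
\nu(z)=|A|+|z-B|-|A\cup(z-B)|\ge |A|+|B|-|G|.
\end{equation*}
This gives a lower bound on the total number of pairs $(a,b)\in A\times B$ with $a+b=z$.

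Next I would bound the number of \emph{forbidden} pairs, namely those $(a,b)\in A\times B$ satisfying $a+b=z$ and $a-b\in S$. Any such pair is in particular a pair $(x,y)\in G\times G$ with $x+y=z$ and $x-y\in S$, so the number of forbidden pairs is at most $L_S(z)\le L_S$. (This is just the observation, used in the paper already to define $L_S$, that the constraints $x+y=z$ and $x-y\in S$ amount to $2x\in z+S$, which has exactly $L_S(z)$ solutions.)

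Combining the two inequalities, the number of pairs $(a,b)\in A\times B$ realising $z$ in $A\wedge^S B$ is at least
\begin{equation*}
\nu(z)-L_S\ge |A|+|B|-|G|-L_S>0
\end{equation*}
by hypothesis. Hence such a pair exists, i.e.\ $z\in A\wedge^S B$, and since $z$ was arbitrary this proves $A\wedge^S B=G$. I do not expect any real obstacle here: the argument is a direct two-line pigeonhole, strictly parallel to the unrestricted case ($S=\{0\}$, $L_S=L(G)$) recalled earlier in the introduction, and uses nothing more than the definition of $L_S$.
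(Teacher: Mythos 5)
Your proposal is correct and follows exactly the paper's own argument: the same inclusion-exclusion bound $\nu(z)\ge |A|+|B|-|G|>L_S$ followed by the observation that at most $L_S(z)\le L_S$ of the representing pairs can satisfy $a-b\in S$, leaving at least one admissible pair. The only difference is that you spell out the pigeonhole count slightly more explicitly than the paper does; there is nothing to add or fix.
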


\begin{proof}
For any $z\in G$ we have
$$\nu (z)=|A\cap (z-B)|=|A|+|z-B|-|A\cup (z-B)|\ge |A|+|B|-|G|>L_S.$$
Thus, by definition of $L_S$, one at least among
the $\nu(z)$ pairs $(a,b) \in A \times B$ such that $z=a+b$ must satisfy
$a-b\notin S$. Therefore $z \in A\wedge^S B$.
\end{proof}

Similarly, we obtain the following:

\begin{lemma}
\label{exception}
Let $A,B$ and $S$ be non-empty subsets of a finite Abelian group $G$ and
assume that $|A|+|B|= |G|+L_S,$ in particular $A+B=G$. Then
\begin{itemize}
\item[(i)] For each $z \in G$, $\nu (z)\ge L_S$.
\item[(ii)] If $z \notin A\wedge^SB $ then $\nu(z)=L_S(z)=L_S$.
That is, there are exactly $L_S$ pairs $(a,b)\in A\times B$ such that
$z=a+b$. Moreover, for each sum $z=a+b$ with $a\in A$ and $b\in B$ we have
$a-b\in S$.
\end{itemize}
\end{lemma}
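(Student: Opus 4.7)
The plan is to imitate the proof of Lemma \ref{phS}, but to squeeze the inequalities at the boundary case $|A|+|B| = |G| + L_S$. For part (i), the same inclusion--exclusion used in Lemma \ref{phS} gives, for every $z \in G$,
$$\nu(z) = |A \cap (z-B)| = |A| + |z-B| - |A \cup (z-B)| \ge |A| + |B| - |G| = L_S,$$
where the hypothesis $|A|+|B| = |G| + L_S$ is used in the final equality. Since $L_S \ge |S| \ge 1$, this also justifies the parenthetical remark that $A+B = G$.

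For part (ii), I would combine (i) with the defining property of $A \wedge^S B$. If $z \notin A \wedge^S B$, then by definition every pair $(a,b) \in A \times B$ with $a + b = z$ must satisfy $a - b \in S$; this is precisely the ``moreover'' clause. But then the $\nu(z)$ representations of $z$ as $a+b$ with $(a,b) \in A \times B$ form a subset of the family of pairs $(x,y) \in G \times G$ with $x+y = z$ and $x-y \in S$, whose cardinality is $L_S(z)$. Hence $\nu(z) \le L_S(z)$, and by the very definition of $L_S$ as the maximum of $L_S(z)$ over $z \in G$, we have $L_S(z) \le L_S$. Together with (i) this yields the chain
$$L_S \le \nu(z) \le L_S(z) \le L_S,$$
forcing equality throughout and giving $\nu(z) = L_S(z) = L_S$.

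There is no real obstacle: once one recognizes that the inequality from Lemma \ref{phS} becomes an equality under $|A|+|B| = |G| + L_S$, everything follows from a two-line sandwich argument. The only subtlety worth flagging explicitly is that the ``moreover'' part of (ii) is not an additional claim to prove but rather the very fact used to obtain $\nu(z) \le L_S(z)$.
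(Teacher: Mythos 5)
Your proof is correct and follows exactly the paper's own argument: the same inclusion--exclusion identity $\nu(z)=|A|+|z-B|-|A\cup(z-B)|\ge|A|+|B|-|G|=L_S$ for (i), and the same sandwich $L_S\le\nu(z)\le L_S(z)\le L_S$ for (ii). Nothing to add.
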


\begin{proof}
For any $z\in G$ we have
$\nu (z)=|A\cap (z-B)|=|A|+|z-B|-|A\cup (z-B)|\ge |A|+|B|-|G|=L_S$. This proves (i).

For the proof of (ii), by definition, if $z=a+b$ with $a\in A$ and
$b\in B$ then $a-b\in S$. Thus,
$\nu (z) \le L_S(z) \le L_S$. Hence, by (i), we must have $\nu (z)=L_S(z)=L_S$.
\end{proof}

Generalizing an earlier notation, we say that $z$ is an
{\em $A\wedge^S B$-exception} if $z\in (A+B)\setminus (A\wedge^SB)$.

A useful reduction is given by the following lemma.

\begin{lemma}
\label{xxl}
Let $A,B$ and $S$ be non-empty subsets of a given Abelian group $G$.
Let $z$ be an $A\wedge^S B$-exception.
\begin{itemize}
\item[(i)] There exist $s\in S$ and $b\in (A-s)\cap B$ such that
$z=2b+s$.
\item[(ii)] For any $s\in S$ and $b\in (A-s)\cap B$ with
$z=2b+s$, letting $A' = A-b-s$, $B'=B-b$ and $S'=S-s$,
we have
\begin{itemize}
\item[(a)] $0\in A'\cap B'\cap S'$,
\item[(b)] $A'\wedge^{S'} B' = A \wedge^S B -(2b + s)$, and
\item[(c)] $0$ is an $A'\wedge^{S'} B'$-exception.
\end{itemize}
\item[(iii)] If $|A|+|B|=|G|+L_S$ then
$$
\nu_{A',B'}(0)=L_{S'}(0)=L_{S'}=L_S.
$$
\item[(iv)] If $|A|+|B|=|G|+L_S$ and $L_S= |S|\ L(G)$ both hold,
then (i), (ii) and (iii) are valid for any $s \in S$.
\end{itemize}
\end{lemma}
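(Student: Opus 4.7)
The plan is to dispatch parts (i)--(iii) by unpacking definitions and invoking Lemma \ref{exception}, saving the real work for (iv), where the hypothesis $L_S = |S|\, L(G)$ enters through Lemma \ref{L_S}.

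For (i), since $z$ is an exception we have $z = a + b$ with $a \in A$, $b \in B$, and $z \notin A \wedge^S B$ forces every such representation to satisfy $a - b \in S$. Taking $s := a - b$ gives $z = 2b + s$ with $b \in (A - s) \cap B$. Part (ii) is bookkeeping. (a) is immediate from $b + s \in A$, $b \in B$, $s \in S$. For (b), I substitute $\alpha = a' + b + s$ and $\beta = b'' + b$, so that the conditions ``$a' + b'' = w$ and $a' - b'' \notin S'$'' translate into ``$\alpha + \beta = w + 2b + s$ and $\alpha - \beta \notin S$'', yielding $A' \wedge^{S'} B' = A \wedge^S B - (2b + s)$ at once. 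Then (c) follows, since $0 \in A' \cap B' \subseteq A' + B'$ while $0 \in A' \wedge^{S'} B'$ would, by (b), place $z$ into $A \wedge^S B$, contradicting the exception hypothesis.

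For (iii), I first note the translation-invariance of $L_{\cdot}$: the bijection $(x, y) \mapsto (x - s, y)$ identifies the pairs counted by $L_S(z)$ with those counted by $L_{S-s}(z - s)$, so in particular $L_{S'} = L_S$. Combined with $|A'| + |B'| = |G| + L_S = |G| + L_{S'}$ and the fact from (ii)(c) that $0$ is an exception for $(A', B', S')$, Lemma \ref{exception}(ii) applied at $0$ yields $\nu_{A', B'}(0) = L_{S'}(0) = L_{S'} = L_S$.

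The main obstacle is (iv), where one must produce a witness $b \in (A - s) \cap B$ with $z = 2b + s$ for \emph{every} $s \in S$, not just the $s$ extracted in (i). Lemma \ref{exception}(ii) provides $L_S(z) = L_S = |S|\, L(G)$, and Lemma \ref{L_S}(i) translates this into $S \subseteq z + 2 \cdot G$, so for every $s \in S$ the equation $2b = z - s$ is solvable in $G$. For any such $b$, the pair $(b + s, b)$ has sum $z$ and difference $s \in S$; denoting by $R(z)$ the set of representations of $z$ in $A \times B$ and by $T(z)$ the set of pairs in $G \times G$ with sum $z$ and difference in $S$, the exception property gives $R(z) \subseteq T(z)$, while the cardinality identities $|R(z)| = \nu(z) = L_S = L_S(z) = |T(z)|$ force $R(z) = T(z)$. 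Hence $(b + s, b) \in A \times B$, producing the desired $b$, after which parts (ii)--(iii) apply to this pair verbatim. This final cardinality-collapse identification of $R(z)$ with $T(z)$ is precisely where the hypothesis $L_S = |S|\, L(G)$ pulls its weight.
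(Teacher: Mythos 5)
Your proposal is correct and follows essentially the same route as the paper: (i)--(iii) are the same definitional unpacking plus Lemma \ref{exception}(ii), and your part (iv) is the paper's counting argument in different clothing --- the identification $R(z)=T(z)$ by comparing $\nu(z)=L_S$ with $|T(z)|=L_S(z)=|S|\,L(G)$ is exactly the paper's observation that each $s\in S$ contributes at most $L(G)$ pairs, so each must contribute exactly $L(G)$.
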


\begin{proof}
Since $z$ is in $A+B$, it can be written $a+b$ for some $a \in A$ and $b \in B$.
But, being an $A\wedge^S B$-exception, we must have $a-b \in S$, say
$a-b=s$, for an $s \in S$. It follows that $b \in A-s$ and
$z=a+b=2b+s$. This proves (i).

To prove (ii), one first observes that $A'=A-a$ and thus (a) is immediate.
Assertion (b) follows from
\begin{eqnarray*}
A'\wedge^{S'} B' & = &
\{ a'+b': a'\in A',\ b'\in B' \ \mbox{and}\  a'-b'\notin S'\} \\
	&=& \{ (\alpha -a)+(\beta -b) :\ \alpha \in A,\ \beta \in B \
\mbox{and}\  (\alpha -a) - (\beta -b) \notin S-s \} \\
	&=& \{ \alpha +\beta -(a+b):\ \alpha \in A,\ \beta \in B \
\mbox{and}\  \alpha - \beta \notin S \} \\
	&=& A\wedge^{S} B - (a+b)\\
	&= &A\wedge^{S} B - (2b+s).
\end{eqnarray*}
Finally (c) follows from the fact that if $0$ (which belongs to $A'+B'$)
was not an $A'\wedge^{S'} B'$-exception, then $0=a'+b'$ with $a' \in A'$,
$b' \in B'$ and $a'-b' \not\in S'$ from which we would derive that $z$
is not an $A\wedge^{S} B$-exception, a contradiction.

Point (iii) is an immediate consequence of Lemma \ref{exception} (ii)
applied to the sets $A',B'$ and $S'$ and the $A'\wedge^{S'}
B'$-exception $z=0$, on recalling that $|A'|=|A|$,
$|B'|=|B|$, $|S'|=|S|$ and $L_{S'}=L_S$.

For the proof of (iv), let us write $k=|S|$, $S=\{s_1,\dots,s_k\}$
and consider $z$ an $A\wedge^S B$-exception.
By Lemma \ref{exception} (ii) again, we know that there are exactly $L_S$ 
pairs $(a,b)\in A\times B$ such that $z=a+b$ and, for each such sum,
we have $a-b\in S$ or, equivalently $a-b=s_i$ for some $i$ in $\{
1,\dots, k\}$. But for each $1 \leq i \leq k$, there are at
most $L(G)$ solutions to the system $a+b=z,\ a-b=s_i$, since it is
equivalent to the equation $z=2b + s_i$.
Since $L_S = |S|\ L(G)$, the
only possibility is that for all $i$  in $\{ 1,\dots, k\}$, there are
exactly $L(G)$ corresponding solutions.
In particular, this implies that there is at least
one solution $(a,b) \in A \times B$ to $a+b=z$ and $a-b=s_i$, for each $i \in \{1,\dots,k \}$.
\end{proof}

\begin{corollary}
\label{coroxxl}
Let $A,B$ and $S$ be non-empty subsets of a given Abelian group
$G$. We assume that $0 \in S$ and that both equalities
$|A|+|B|=|G|+L_S$ and $L_S= |S|\ L(G)$ hold, then
$$
G\setminus (A\wedge^{S} B) \subset 2 \cdot (A \cap B).
$$
\end{corollary}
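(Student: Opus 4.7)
The proof should be very short given the machinery just developed. The plan is to take an arbitrary element $z \in G \setminus (A \wedge^S B)$ and show that $z = 2b$ for some $b \in A \cap B$.

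First, I would observe that since $|A|+|B| = |G| + L_S \geq |G| + 1 > |G|$, Lemma \ref{ph} gives $A+B = G$. Consequently, the set $G \setminus (A \wedge^S B)$ coincides with the set of $A \wedge^S B$-exceptions; fix such a $z$.

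Next, I would apply Lemma \ref{xxl}(iv). Under our two running assumptions $|A|+|B| = |G| + L_S$ and $L_S = |S|\,L(G)$, part (i) of that lemma is upgraded so that for \emph{every} $s \in S$ there exists some $b \in (A-s) \cap B$ with $z = 2b+s$. Specializing to the distinguished element $s = 0 \in S$ (which is legal precisely because $0 \in S$ is part of the hypothesis), we obtain $b \in A \cap B$ with $z = 2b$. Hence $z \in 2\cdot(A \cap B)$, which is the desired conclusion.

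There is essentially no obstacle: the whole content has been front-loaded into Lemma \ref{xxl}(iv), and the corollary is just the observation that the hypothesis $0 \in S$ lets us substitute $s = 0$ into the strengthened conclusion. The only thing to check is the trivial initial reduction that $A+B = G$ so that ``$A \wedge^S B$-exception'' and ``element of $G \setminus (A \wedge^S B)$'' are synonymous in this context.
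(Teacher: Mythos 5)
Your proof is correct and follows exactly the paper's own argument: invoke Lemma \ref{xxl}(iv) to upgrade part (i) to an arbitrary $s\in S$, then specialize to $s=0$ to write any exception as $2b$ with $b\in A\cap B$. Your explicit remark that $A+B=G$ (so that exceptions exhaust $G\setminus(A\wedge^S B)$) is a small point the paper leaves implicit, but it changes nothing of substance.
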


\begin{proof}
This follows from (iv) in the preceding lemma: one may therefore apply
(i) with any prechosen element $s$ in $S$. Selecting $s=0$, it follows
that any $A\wedge^{S}B$-exception $z$ can be written
$2b$ for some $b\in  A \cap B$.
\end{proof}

The next lemma is a technical result which will be central to give
a lower bound for $|A\wedge^SB|$.

We shall use the standard notation
$$
S\Delta -S = (S \setminus -S)\cup (-S \setminus S)
$$
for any set $S$ of $G$. 
 Notice that
$$
(S\cap -S)\cup (S\Delta -S)=(S\cap -S)\cup (S \setminus -S)\cup (-S \setminus S)
$$
is a partition of $S\cup -S$.

\begin{lemma}
\label{clau}
Suppose that $A,B$ and $S$ are subsets of a finite Abelian group such that $|A|+|B|= |G|+L_S$, $L_S=|S|\ L(G)$ and $0\in  A\cap B\cap S$. If $b\in A\cap B$ and $0,2b\notin A\wedge^S B$, then for any $x\in G$ we have
$$
|A\cap \{b-x,b+x\}|+|B\cap \{b-x,b+x\}|=
\left\{
\begin{array}{cl}
2 & \text{if } 2x \notin S\cup -S, \\
3 & \text{if } 2x \in S\Delta -S, \\
4 & \text{if } 2x \in S\cap -S,\ 2x\neq 0 \\
2 & \text{if } 2x =0 \\
\end{array}
\right.
$$

\end{lemma}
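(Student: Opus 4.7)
The plan is to bound $\Sigma(x):=|A\cap\{b-x,b+x\}|+|B\cap\{b-x,b+x\}|$ above pointwise by the claimed right-hand side $R(x)$, then force equality via a global averaging identity.

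For the pointwise upper bound, I use that $2b$ is an $A\wedge^S B$-exception: by Lemma \ref{exception}(ii), every pair $(a',b')\in A\times B$ with $a'+b'=2b$ satisfies $a'-b'\in S$. Applied to the pair $(b+x,b-x)$ this gives: if $b+x\in A$ and $b-x\in B$, then $2x\in S$; symmetrically, if $b-x\in A$ and $b+x\in B$, then $2x\in -S$. Letting $p_1,p_2,p_3,p_4$ denote the indicators of $b+x\in A$, $b-x\in A$, $b+x\in B$, $b-x\in B$ respectively, the contrapositive reads $p_1+p_4\le 1$ whenever $2x\notin S$, and $p_2+p_3\le 1$ whenever $2x\notin -S$. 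A case analysis on the location of $2x$ with respect to $S$ and $-S$ then yields $\Sigma(x)\le 2,3,4$ in cases I, II, III; case IV is automatic since $b+x=b-x$ makes $\{b-x,b+x\}$ a singleton, whence $\Sigma(x)\le 2$.

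For the averaging identity, I first derive $b+G_0\subset A\cap B$. Since $\nu(2b)=L_S=|S|\,L(G)$ (by Lemma \ref{exception}(ii) and hypothesis), the equality-of-counts argument appearing in the proof of Lemma \ref{xxl}(iv) forces, for each $s\in S$, all $L(G)$ pairs $(a',b')$ solving $a'+b'=2b$ and $a'-b'=s$ to lie in $A\times B$. Specialising to $s=0\in S$ and writing $a'=b+h$, $b'=b-h$ for each $h\in G_0$ yields $b+G_0\subset A\cap B$. Double-counting pairs $(x,y)$ with $y\in A\cap\{b-x,b+x\}$, observing that each $y\in A$ gives two such $x$ unless $y\in b+G_0$ (where it gives one), now produces $\sum_x|A\cap\{b-x,b+x\}|=2|A|-L(G)$ and similarly for $B$. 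Hence
\[
\sum_{x\in G}\Sigma(x)=2(|A|+|B|)-2L(G)=2|G|+2L_S-2L(G).
\]
A direct computation of $\sum_{x\in G}R(x)$, using $|\mathcal{H}(T)|=|T|\,L(G)$ for each $T\subset 2\cdot G$ (valid because $S\subset 2\cdot G$ by Lemma \ref{L_S}(vi), applied in turn to $T=S$, $-S$, $S\cap -S$, $S\Delta -S$), yields exactly the same total.

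Since $\Sigma(x)\le R(x)$ pointwise and $\sum_x\Sigma(x)=\sum_x R(x)$, pointwise equality follows for every $x\in G$. The main delicate point is the numerical match of the two sums, which essentially uses both hypotheses $|A|+|B|=|G|+L_S$ and $L_S=|S|\,L(G)$, together with careful bookkeeping of $|S\cap -S|$; modulo that computation, the proof rests on the structural consequences of the double exception at $0$ and $2b$.
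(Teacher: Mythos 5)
Your proof is correct and follows essentially the same strategy as the paper: a pointwise upper bound $\Sigma(x)\le R(x)$ derived from $2b$ being an exception, forced into equality by showing the two sides have equal sums over $G$ (using $S\subset 2\cdot G$ from Lemma \ref{L_S}(vi) and $L_S=|S|\,L(G)$). The only difference is bookkeeping for the elements with $2x=0$: the paper sums with those $x$ counted twice, while you use the ordinary sum and correct by $-2L(G)$ via the observation $b+G_0\subset A\cap B$; both computations check out.
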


\begin{proof}
We shall denote by LHS$(x)$ and RHS$(x)$, respectively, the left-hand side and the right-hand side of the equality to prove. From $2b\notin  A \wedge^S B$ we easily get  $\hbox{LHS}(x)\le \hbox{RHS}(x)$, for any $x\in G$. On  the other hand, if $\sum_x^*$ denotes the summation over all elements $x\in G$, with every value $x$ with $2x=0$ attained twice, then
$${\sum_x}^*\hbox{LHS}(x)=2|A|+2|B|.$$

Furthermore, by Lemma \ref{L_S} (vi) we have $S\cup (-S)\subset 2\cdot G$, implying:
\begin{eqnarray*}
  {\sum_x}^*\hbox{RHS}(x) &=& 2|G|+|\{x\in G:\ 2x\in S\Delta -S\}|+2|\{x\in G:\ 2x\in S\cap -S\}|\\
  &=& 2|G|+|\{x\in G:\ 2x\in S\cup -S\}|+|\{x\in G:\ 2x\in S\cap -S\}|\\
   &=& 2|G|+|S\cup -S|L(G)+|S\cap -S|L(G) \\
   &=& 2|G|+2|S|L(G) \\
  &=& 2|G|+2L_S \\
  &=&2|A|+2|B|.
\end{eqnarray*}
\end{proof}

From Lemma \ref{clau} we derive the following corollary.

\begin{corollary}
Suppose that $A, B$ and $S$ are subsets of a finite Abelian group $G$ such that $|A|+|B|= |G|+L_S$, $L_S=|S|\ L(G)$, and $0\in A\cap B\cap S$. If $b\in A\cap B$ and
$0,2b\notin A\wedge^S B$, then there exist partitions
$$G\setminus \mathcal{H}(S\cup -S)=X_0\cup -X_0\cup X_1\cup -X_1\cup X_2\cup -X_2$$
and
$$\mathcal{H}(S\Delta -S)=Y_1\cup -Y_1\cup Y_2\cup-Y_2$$
such that
\begin{eqnarray*}
A-b&=&\mathcal{H}(S\cap -S)\cup X_0\cup X_1\cup-X_1\cup Y_1\cup -Y_1\cup Y_2,\\
B-b&=&\mathcal{H}(S\cap -S)\cup X_0\cup X_2\cup-X_2\cup Y_1\cup Y_2\cup -Y_2.
\end{eqnarray*}
\end{corollary}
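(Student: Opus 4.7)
The plan is to derive the two decompositions by a pair-by-pair analysis, starting from Lemma~\ref{clau} combined with the two ``forbidden pattern'' consequences of the hypothesis $2b\notin A\wedge^S B$. First, I would translate by $-b$, replacing $A$ by $A-b$ and $B$ by $B-b$, so that $0\in A\cap B$ and the original exception at $2b$ becomes an exception at $0$; the statement to prove then concerns the new sets $A$ and $B$ directly. Since $0\in S$, every element of $G_0$ lies in $\mathcal{H}(S)\subset\mathcal{H}(S\cup -S)$, so every pair $\{x,-x\}$ inside $G\setminus\mathcal{H}(S\cup -S)$ or inside $\mathcal{H}(S\Delta -S)$ consists of two distinct elements, which is what makes the $\pm$ decompositions unambiguous.

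Unpacking $2b\notin A\wedge^S B$ after translation gives two implications: if $x\in A$ and $-x\in B$ then $2x\in S$, and by replacing $x$ with $-x$, if $-x\in A$ and $x\in B$ then $2x\in -S$. Together with the exact values furnished by Lemma~\ref{clau} these constraints completely pin down the admissible membership patterns on each pair $\{x,-x\}$. I would then split $G=\mathcal{H}(S\cap -S)\sqcup \mathcal{H}(S\Delta -S)\sqcup (G\setminus\mathcal{H}(S\cup -S))$ and treat each stratum. On $\mathcal{H}(S\cap -S)$ the sum in Lemma~\ref{clau} equals $4$ (or $2$ when $x=0$), which forces both $x$ and $-x$ to lie in $A\cap B$; this produces the $\mathcal{H}(S\cap -S)$ term that appears in both decompositions.

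On $G\setminus\mathcal{H}(S\cup -S)$ the sum equals $2$ and both forbidden-pattern implications are active: the two ``crossed'' configurations, in which $x$ and $-x$ split across $A\setminus B$ and $B\setminus A$ in opposite ways, are ruled out, leaving exactly three types of pairs, namely both elements in $A\setminus B$, both in $B\setminus A$, or one element in $A\cap B$ together with the other in neither set. Grouping pairs of each type and sending the ``distinguished'' element (the one in $A\cap B$ for the third type, chosen arbitrarily for the other two) to the positive piece defines $X_1,-X_1,X_2,-X_2,X_0,-X_0$. On $\mathcal{H}(S\Delta -S)$ the sum equals $3$ and exactly one of the two implications is active (depending on whether $2x\in S$ or $2x\in -S$); the surviving sum-$3$ configurations all share the structure ``one element of the pair in $A\cap B$, the other in exactly one of $A\setminus B$ or $B\setminus A$'', and splitting according to where the ``extra'' element lies defines $Y_1,-Y_1,Y_2,-Y_2$.

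Reading off which pieces belong to $A$ and which to $B$ then yields the two claimed decompositions. The main obstacle, essentially bookkeeping, will be to check that the two forbidden-pattern constraints together with the prescribed sum leave exactly the configurations listed above among the sixteen a priori possibilities on a pair, and to organize the $\pm$ conventions so that each $X_i$ and $Y_i$ is indeed disjoint from its negative.
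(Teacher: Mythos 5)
Your argument is correct and is exactly the derivation the paper intends: the paper states the corollary without proof, saying only that it follows from Lemma~\ref{clau}, and your case analysis (exact counts from the lemma on each stratum, plus the two forbidden patterns coming from $2b\notin A\wedge^S B$, which is the same observation used to prove $\mathrm{LHS}(x)\le\mathrm{RHS}(x)$ in the lemma) is the right way to fill that in. The only quibble is that the parenthetical ``or $2$ when $x=0$'' should read ``when $2x=0$,'' i.e.\ $x\in G_0\subset\mathcal{H}(S\cap -S)$, but this does not affect the conclusion on that stratum.
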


\section{The lower bound}

We start this section with a lemma that contains the central part
of the main result.

\begin{lemma}\label{PofTheorem7}
Let $A,B$ and $S$ be subsets, containing $0$,
of a finite Abelian group $G$. Assume that $|A|+|B|= |G|+L_S$,
that $0$ is an $A\wedge^S B$-exception and that $L_S=|S|\ L(G)$.
Let $z$ be an $A\wedge^S B$-exception not contained in $S\cup -S$.
\begin{itemize}
\item[(i)] If $z'$ is another $A\wedge^S B$-exception, then we have $z'-z\in A\cap B$,
\item[(ii)] Moreover, if $z'\notin S\cup -S$ then $z'-z\in S\cap -S$.
\end{itemize}
\end{lemma}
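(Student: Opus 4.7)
By Corollary \ref{coroxxl} we may pick $b,c\in A\cap B$ with $z=2b$ and $z'=2c$. A preliminary observation, obtained by applying Lemma \ref{clau} at centre $b$ with parameter $x=b$ (the elements being $\{0,z\}$, the right-hand side $2$ since $2x=z\notin S\cup -S$, and $0\in A\cap B$ already saturating the count), is that $z\notin A\cup B$.

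To prove (i), we apply Lemma \ref{clau} at centre $c$ with parameter $x=c-z$, so that the two elements under scrutiny are $c+x=z'-z$ and $c-x=z$. Since $z\notin A\cup B$, the left-hand side reduces to $|A\cap\{z'-z\}|+|B\cap\{z'-z\}|\leq 2$, forcing the right-hand side --- which depends on $2x=z'-2z$ --- to be at most $2$. This excludes $z'-2z\in S\Delta -S$ (right-hand side $3$) and $z'-2z\in (S\cap -S)\setminus\{0\}$ (right-hand side $4$). The degenerate case $z'=2z$ (equivalent to $2x=0$) is ruled out as follows: $2c=2z$ forces $c=z+g_0$ for some $g_0\in G_0$; the subcase $g_0=0$ gives $c=z\notin A$, contradicting $c\in A$, while $g_0\neq 0$ leads to a contradiction between $G_0\subset A$ (from Lemma \ref{xxl}(iv) applied to the $0$-exception with $s=0$) and $-g_0=g_0\notin A$ (derived from Lemma \ref{clau} at centre $b$ with $x=b+g_0$). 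Thus $z'-2z\notin S\cup -S$, the right-hand side is exactly $2$, and the element $z'-z$ must therefore belong to $A\cap B$.

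For part (ii), apply (i) both to the pair $(z,z')$ and (using the symmetric hypothesis $z'\notin S\cup -S$) to $(z',z)$: this gives $\pm(z'-z)\in A\cap B$. The corollary following Lemma \ref{clau} applied at $b=0$ decomposes $A\cap B=\mathcal{H}(S\cap -S)\cup X_0\cup Y_1\cup Y_2$, where the negated pieces $-X_0,-Y_1,-Y_2$ are all disjoint from $A\cap B$. Since the only component invariant under negation is $\mathcal{H}(S\cap -S)$, the symmetry $\pm(z'-z)\in A\cap B$ forces $z'-z\in\mathcal{H}(S\cap -S)$, i.e., $2(z'-z)\in S\cap -S$.

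The main obstacle is the final promotion step from $2(z'-z)\in S\cap -S$ to $z'-z\in S\cap -S$ itself; we expect this to follow from further applications of Lemma \ref{clau} at centres $b$ and $c$ combined with Lemma \ref{xxl}(iv) applied simultaneously for every $s\in S$, exploiting that both $b$ and $c$ must lie in the $X_0$-component of the decomposition at centre $0$ (a consequence of $z,z'\notin S\cup -S$).
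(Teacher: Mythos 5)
Your proof of part (i) is correct and is essentially the paper's argument: you first show $z\notin A\cup B$ and then apply Lemma \ref{clau} at centre $c$ with $x=c-z$, so that one of the two tested elements is $z\notin A\cup B$ and the equality in the lemma forces the other element, $z'-z$, into $A\cap B$. (Your separate treatment of the degenerate case $2x=0$ is correct but roundabout: in that case both tested elements equal $z\notin A\cup B$, so the left-hand side is $0$ while the right-hand side is $2$, an immediate contradiction.) For part (ii), however, there is a genuine gap. The lemma asserts $z'-z\in S\cap -S$; what you actually prove is only the weaker statement $2(z'-z)\in S\cap -S$. (Your route to that intermediate fact, via the negation-asymmetry of the decomposition $A\cap B=\mathcal{H}(S\cap -S)\cup X_0\cup Y_1\cup Y_2$ from the corollary of Lemma \ref{clau}, is a pleasant alternative to the paper's, which writes $0=(z-z')+(z'-z)$ as two sums in $A+B$ and uses that $0$ is an exception.) The final ``promotion step'' that you explicitly defer --- ``we expect this to follow from further applications\dots'' --- is precisely the substance of assertion (ii), and the sketch you give (locating $b$ and $c$ in $X_0$ and invoking Lemma \ref{xxl}(iv) for every $s\in S$) is not a proof.

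The paper closes this gap with a short argument you are missing. Since $2(z'-z)\in S\cap -S$, Lemma \ref{clau} applied at centre $b$ with $x=z'-z$ has right-hand side equal to $4$ (or to $2$ if $2(z'-z)=0$, in which case the two tested elements coincide); in either case this forces $b+(z'-z)=2c-b\in A\cap B$. Now write the exception $z'=2c=(2c-b)+b$ as a sum of an element of $A$ and an element of $B$: since $z'$ is an $A\wedge^{S}B$-exception, every such representation has its difference in $S$, so $(2c-b)-b=z'-z\in S$. Exchanging the roles of $z$ and $z'$ (legitimate, as both lie outside $S\cup -S$) gives $z-z'\in S$, hence $z'-z\in S\cap -S$. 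Without some such step your proof of (ii) is incomplete.
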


\begin{proof}
By Corollary \ref{coroxxl}, we may assume that $z=2b$ and $z'=2b'$ for some $b,b' \in A\cap B$.

Clearly $2b \notin A\cup B$, otherwise since $0\in A\cap B$
and $2b$ is an $A\wedge^{S}B$-exception, we would have that
either $2b\in S$ or $-2b\in S$ that is, $2b \in S\cup -S$,
a contradiction. Defining $r=2b-b'$, this can be reformulated as
$b'+r \notin A\cup B$. Therefore, since $2b'$ is an $A\wedge^{S}B$-exception,
by Lemma \ref{clau},
we must have $b'-r \in A\cap B$, or equivalently $2b'-2b \in A\cap B$.
This proves (i).

Suppose now that $2b'\notin S\cup -S$. By symmetry, applying what
we just proved, we also have $2b-2b'\in A \cap B$.

But then, using the fact that $0$ is an $A\wedge^{S}B$-exception,
the equalities (giving two representations of $0$ as an element of $A+B$)
$$
(2b-2b')+(2b'-2b)=(2b'-2b)+(2b-2b')=0
$$
imply that both $4b-4b'$ and $4b'-4b$ are in $S$, that is $2(2b'-2b)\in S\cap -S$.

Now, Lemma \ref{clau} implies that $2b'-b= b+(2b'-2b)\in  A\cap B$.
Using this and  $b \in B$, we deduce from
$2b'=(2b'-b)+b\in A+B$ and the fact that  $2b'$ is an $A\wedge^{S}B$-exception,
that $2b'-2b= (2b'-b)-b \in S$. By symmetry, $2b-2b'\in S$, therefore
$2b'-2b \in S\cap -S$, which proves assertion (ii).
\end{proof}

\begin{corollary}
\label{CoroPofThm7}
Let $A,B$ and $S$ be subsets, containing $0$,
of a finite Abelian group $G$. Assume that $|A|+|B|= |G|+L_S$,
that $0$ is an $A\wedge^S B$-exception and that $L_S=|S|\ L(G)$.
Then, for any $A\wedge^S B$-exception $z$
not contained in $S\cup -S$, we have
$$
G\setminus (A\wedge^S B)\subset S\cup -S \cup (z+S\cap -S ).
$$
In the special case $S=-S$, we obtain
$G \setminus (A\wedge^S B)\subset S \cup (z+S )$.
\end{corollary}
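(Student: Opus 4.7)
The plan is essentially to unwind Lemma~\ref{PofTheorem7} by a direct case analysis. First, I would note that the hypothesis $|A|+|B|=|G|+L_S$ together with $L_S\geq 1$ gives $|A|+|B|>|G|$, so by the Folkloric prehistorical lemma we have $A+B=G$. Consequently $G\setminus(A\wedge^S B)=(A+B)\setminus(A\wedge^S B)$ consists precisely of the $A\wedge^S B$-exceptions, which is the natural object to analyse.

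Fix the distinguished exception $z\notin S\cup -S$ provided by the hypothesis, and let $z'$ be an arbitrary element of $G\setminus(A\wedge^S B)$, i.e.\ an arbitrary $A\wedge^S B$-exception. The plan is to do the following dichotomy. If $z'\in S\cup -S$, the desired inclusion $z'\in S\cup -S\cup(z+S\cap -S)$ is immediate. Otherwise $z'$ is an $A\wedge^S B$-exception lying outside $S\cup -S$, and we are in a position to apply Lemma~\ref{PofTheorem7}~(ii) with the pair $(z,z')$; this yields $z'-z\in S\cap -S$, equivalently $z'\in z+S\cap -S$. Combining the two cases gives the announced inclusion.

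For the special case $S=-S$, I would simply observe that $S\cup -S=S$ and $S\cap -S=S$, so that $S\cup -S\cup(z+S\cap -S)$ collapses to $S\cup(z+S)$, yielding the stated simplification.

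There is no real obstacle here: the corollary is a straightforward translation of Lemma~\ref{PofTheorem7} into a global statement about the exception set, the only non-cosmetic move being the invocation of the prehistorical lemma to identify $G\setminus(A\wedge^S B)$ with the exception set.
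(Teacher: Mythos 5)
Your proposal is correct and follows essentially the same route as the paper: identify $G\setminus(A\wedge^S B)$ with the set of exceptions (via $A+B=G$), and for each exception $z'$ apply the dichotomy of Lemma~\ref{PofTheorem7}~(ii) with the fixed exception $z\notin S\cup -S$. The paper's proof is just a terser version of the same argument, so nothing further is needed.
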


\begin{proof}
Since, by assumption, $z$ is not in $S\cup -S$, we may then apply the
preceding lemma. Consider another $A\wedge^S B$-exception $z'$.
By Lemma \ref{PofTheorem7} (ii), it is either in $S\cup -S$,
or in $z+S\cap -S$.
\end{proof}

The next theorem gives us the lower bound for restricted sumsets
with respect to a set $S$, in the case $L_S=|S|\ L(G)$.
It is a direct application of Lemma \ref{PofTheorem7}.

\begin{theorem}
\label{lower_2s}
Let $A,B$ and $S$ be non-empty subsets of a finite Abelian
group $G$. If $|A|+|B|=|G|+L_S$ and $L_S=|S|\ L(G)$, then
$$
|A\wedge^S B|\ge |G|-2|S|.
$$
\end{theorem}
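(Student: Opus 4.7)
The plan is to reduce to the normalized case handled by Corollary \ref{CoroPofThm7} via translation, and then split into two cases according to whether or not some exception lies outside $S \cup -S$.

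First I would dispose of the trivial situation $A \wedge^S B = G$, where the bound is immediate. Otherwise, pick any $A \wedge^S B$-exception $z_0$. The key reduction lemma to invoke is Lemma \ref{xxl}, specifically part (iv), which is where the assumption $L_S = |S|\,L(G)$ is crucial: for any preselected $s \in S$ (say $s = s_0$ for some fixed $s_0 \in S$), there exists $b \in (A - s_0) \cap B$ with $z_0 = 2b + s_0$. Set $A' = A - b - s_0$, $B' = B - b$, $S' = S - s_0$. Lemma \ref{xxl} gives $0 \in A' \cap B' \cap S'$ and that $0$ is an $A' \wedge^{S'} B'$-exception, with $A' \wedge^{S'} B' = A \wedge^S B - z_0$. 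Since translation of $S$ preserves $L_S$ (a short check: $L_{S'}(z) = L_S(z + s_0)$, so the maximum is unchanged) and cardinalities, the triple $(A', B', S')$ still satisfies $|A'| + |B'| = |G| + L_{S'}$ and $L_{S'} = |S'|\,L(G)$, i.e., all the hypotheses of Corollary \ref{CoroPofThm7}.

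Now I would split on the existence of an $A' \wedge^{S'} B'$-exception outside $S' \cup -S'$. If such an exception $z$ exists, Corollary \ref{CoroPofThm7} yields
\[
G \setminus (A' \wedge^{S'} B') \subset (S' \cup -S') \cup (z + S' \cap -S'),
\]
whose cardinality is at most $|S' \cup -S'| + |S' \cap -S'| = |S'| + |-S'| = 2|S|$. If, on the other hand, every $A' \wedge^{S'} B'$-exception is contained in $S' \cup -S'$, then trivially
\[
|G \setminus (A' \wedge^{S'} B')| \le |S' \cup -S'| \le 2|S|.
\]
In either case $|G \setminus (A' \wedge^{S'} B')| \le 2|S|$, and since translation preserves the size of the exception set, $|G \setminus (A \wedge^S B)| \le 2|S|$, i.e.\ $|A \wedge^S B| \ge |G| - 2|S|$.

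The entire nontrivial content has already been packaged into Lemma \ref{xxl}(iv) and Corollary \ref{CoroPofThm7}, so there is no serious obstacle; the only thing to be careful about is checking that the translated triple still satisfies $L_{S'} = |S'|\,L(G)$ and that $0$ really is an $A' \wedge^{S'} B'$-exception, both of which are immediate from Lemma \ref{xxl}. This explains the author's remark that the theorem is a direct application of Lemma \ref{PofTheorem7}.
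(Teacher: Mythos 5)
Your proposal is correct and follows essentially the same route as the paper: translate via Lemma \ref{xxl} so that $0$ is an exception of the normalized triple, then split on whether some exception lies outside $S'\cup -S'$, using Corollary \ref{CoroPofThm7} in the nontrivial case and the bound $|S'\cup -S'|+|S'\cap -S'|=2|S|$. The only cosmetic difference is that you invoke part (iv) of Lemma \ref{xxl} to prechoose $s$, whereas the paper only needs parts (i)--(iii); this changes nothing.
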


\begin{proof}
Since $L_S\ge 1$, Lemma \ref{ph} implies that $A+B=G$. Assume that
the set of exceptions $G\setminus (A\wedge^S B)$ is not empty, otherwise
there is nothing to prove. Let $z$ be an $A\wedge^S B$-exception.

Lemma \ref{xxl} gives the existence of $s \in S$ and
$b\in (A-s)\cap B$ such that $z=2b+s$ and if we put
$A'=A-b-s$, $B'=B-b$ and $S'=S-s$, we have that $|A'|+|B'|=|G|+L_{S}$,
$0\in A'\cap B'\cap S'$, $0$ is an $A'\wedge^{S'} B'$-exception and
that $L_{S}=L_{S'}=L_{S'}(0)$.

If there is no $A'\wedge^{S'}B'$-exception outside $S'\cup -S'$, then
$$
|G\setminus (A\wedge^{S}B)| = |G\setminus (A'\wedge^{S'}B')|
\le |S'\cup -S'| \le 2|S'| = 2 |S|
$$
and thus the result holds.

Suppose now that there is at least one $A'\wedge^{S'}B'$-exception
outside $S'\cup -S'$. By applying Corollary \ref{CoroPofThm7}, we obtain
\begin{eqnarray*}
|G \setminus (A\wedge^{S}B) | & = & |G \setminus (A' \wedge^{S'} B')| \\
	& \leq & |S'\cup -S'|+ |S'\cap -S'| \\
	& = & 2|S'| \\
	& = & 2|S|
\end{eqnarray*}
which proves the theorem.
\end{proof}

We introduce another lemma.

\begin{lemma}
\label{texnik}
Let $A,B$ and $S$ be subsets, containing $0$,
of a finite Abelian group $G$. Assume that  $|A|+|B|=|G|+L_S$ and
that $0$ is an $A\wedge^{S} B$-exception.
Let $\Sigma = S \cap 2 \cdot G$. Then
\begin{itemize}
\item[(i)] $L_S = |\Sigma|\ L(G)=L_{\Sigma}(0) =L_{\Sigma}$,
\item[(ii)] $|A\wedge^{\Sigma} B|\ge |G|-2|\Sigma|$, and
\item[(iii)] the set of
$A\wedge^{S} B$-exceptions can be partitioned as follows:
$$
G\setminus (A\wedge^{S} B)= (G \setminus (A\wedge^{\Sigma} B))
\cup ( G\setminus (A\wedge^{(S\setminus \Sigma)} B)).
$$
\end{itemize}
\end{lemma}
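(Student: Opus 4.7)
The plan is to establish the three assertions in the given order, with (i) feeding into (ii) and with (iii) standing mostly on its own.

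For (i), I would chain several equalities. The hypotheses $|A|+|B|=|G|+L_S$ and ``$0$ is an $A\wedge^S B$-exception'' allow me to invoke Lemma \ref{exception}(ii), which yields $\nu(0)=L_S(0)=L_S$. By Lemma \ref{L_S}(iii), $L_S(0)=|S\cap 2\cdot G|\,L(G)=|\Sigma|\,L(G)$, so $L_S=|\Sigma|\,L(G)$. To incorporate $L_\Sigma$ into the chain, I would apply Lemma \ref{L_S}(iii) again, this time to the set $\Sigma\subseteq 2\cdot G$, obtaining $L_\Sigma(0)=|\Sigma\cap 2\cdot G|\,L(G)=|\Sigma|\,L(G)$, and sandwich $L_\Sigma(0)\le L_\Sigma\le |\Sigma|\,L(G)$ using Lemma \ref{L_S}(iv). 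This collapses to $L_S=L_\Sigma=L_\Sigma(0)=|\Sigma|\,L(G)$, as required.

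Assertion (ii) should then be immediate from Theorem \ref{lower_2s} applied to the triple $(A,B,\Sigma)$: part (i) gives $L_\Sigma=|\Sigma|\,L(G)$ and $|A|+|B|=|G|+L_S=|G|+L_\Sigma$, so the hypotheses are in force and the conclusion $|A\wedge^\Sigma B|\ge |G|-2|\Sigma|$ follows directly.

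For assertion (iii), which carries the real content, I would first note $A+B=G$ by Lemma \ref{ph} since $|A|+|B|>|G|$. The inclusion ``$\supseteq$'' is immediate from $\Sigma\subseteq S$ and $S\setminus\Sigma\subseteq S$, which give $A\wedge^S B\subseteq A\wedge^\Sigma B$ and $A\wedge^S B\subseteq A\wedge^{S\setminus\Sigma} B$. For the reverse inclusion ``$\subseteq$'', I would argue by contradiction: assume $z\in G\setminus(A\wedge^S B)$ lies in both $A\wedge^\Sigma B$ and $A\wedge^{S\setminus\Sigma} B$. Since every representation $z=a+b$ must have $a-b\in S$, there must exist representations $(a_1,b_1)$ with $a_1-b_1\in S\setminus\Sigma$ and $(a_2,b_2)$ with $a_2-b_2\in\Sigma$. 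The main obstacle, though ultimately a short parity computation, is to extract a contradiction from the coexistence of these two representations. I would exploit $\Sigma\subseteq 2\cdot G$ whereas $(S\setminus\Sigma)\cap 2\cdot G=\emptyset$: the identity $2a_2=z+(a_2-b_2)$ forces $z\in 2\cdot G$, and then $2a_1=z+(a_1-b_1)$ forces $a_1-b_1\in 2\cdot G$, contradicting $a_1-b_1\in S\setminus\Sigma$. The disjointness required to upgrade the equality to a true partition is easier: if some $z$ lay in both $G\setminus(A\wedge^\Sigma B)$ and $G\setminus(A\wedge^{S\setminus\Sigma} B)$, any representation of $z$ (which exists since $A+B=G$) would yield $a-b$ simultaneously in $\Sigma$ and in $S\setminus\Sigma$, which is impossible.
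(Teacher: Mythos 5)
Your proposal is correct and follows essentially the same route as the paper: part (i) via Lemma \ref{exception}(ii) and Lemma \ref{L_S}(iii)--(iv), part (ii) via Theorem \ref{lower_2s} applied to $(A,B,\Sigma)$, and part (iii) by showing that all representations of an exception $z$ have differences in the same coset modulo $2\cdot G$ (your computation $a-b=2a-z$ is the same algebra as the paper's $s=s'+2(b'-b)$). Your explicit verification of disjointness is a welcome touch the paper leaves implicit.
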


\begin{proof}
By Lemmas \ref{L_S} and \ref{exception}, since $0$ is an $A\wedge^{S} B$-exception, we have that
$$
L_S=|\Sigma|\ L(G).
$$
By applying Lemma \ref{L_S} (iii), we check that
$L_{\Sigma}(0)=|\Sigma|\ L(G)$ and by definition of $L_{\Sigma}$
and Lemma \ref{L_S} (iv), we have
$L_{\Sigma}(0) \leq L_{\Sigma} \leq | \Sigma |\ L(G)$, therefore
$$
|\Sigma|\ L(G)=L_{\Sigma}(0) =L_{\Sigma}
$$
and (i) is proved.

In view of (i), Theorem \ref{lower_2s} applied to the sets $A,B$ and
$\Sigma$ implies that
\begin{equation}
\label{Sigma}
\nonumber
|A\wedge^{\Sigma} B|\ge |G|-2|\Sigma|,
\end{equation}
that is (ii).

To prove (iii), first notice that it is immediate that the
right-hand side is included in the left-hand side. Let us see
now that the other inclusion holds. Assume that $z$ is an
$A\wedge^{S} B$-exception. By Lemma \ref{xxl} (i),
there are $s\in S$ and $b\in (A-s)\cap B$ such that $z=2b+s.$
Assume that there exists a different $s'\in S$ such that $z=2b'+s'$,
where $b'\in (A-s')\cap B$. Thus, we obtain that
$s=s'+2(b'-b)\in s'+2 \cdot G$. That is, if $s\in 2 \cdot G$
then $s'\in 2 \cdot G$, and viceversa. Hence, $s\in \Sigma$ if and only if,
 $s'\in \Sigma$. Therefore, if $z$ is an
$A\wedge^{S} B$-exception and there exist $a,a'\in A$ and $b,b'\in B$ such that $z=a+b=a'+b'$ either $a-b,a'-b'\in \Sigma$ or $a-b,a'-b'\in S\setminus \Sigma$.
\end{proof}

We are now ready for our next theorem which is
a generalization of Theorem \ref{Guo_theo_bound}.

\begin{theorem}
\label{general}
Let $A,B$ and $S$ be non-empty subsets of a finite Abelian group $G$.
If $|A|+|B|=|G|+L_S$  then  $|A\wedge^S B|\ge |G|-2|S|.$
\end{theorem}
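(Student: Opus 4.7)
My plan is to reduce the general case to Theorem~\ref{lower_2s} by cutting the exception set $G\setminus(A\wedge^S B)$ into pieces indexed by cosets of $2\cdot G$ and bounding each piece separately. If $A\wedge^S B=G$ there is nothing to prove; otherwise pick an exception and apply Lemma~\ref{xxl} to translate $A,B,S$ so that $0\in A\cap B\cap S$ and $0$ is an $A\wedge^S B$-exception (this preserves $|A|,|B|,|S|,L_S$ and the cardinality of the exception set). Setting $\Sigma=S\cap 2\cdot G$, Lemma~\ref{texnik}(i) yields $L_S=|\Sigma|\,L(G)$, and Lemma~\ref{texnik}(iii) partitions the exceptions into $\mathcal{E}_1=G\setminus(A\wedge^{\Sigma} B)$ (whose elements lie in $2\cdot G$) and $\mathcal{E}_2=G\setminus(A\wedge^{S\setminus\Sigma} B)$ (whose elements lie in $(S\setminus\Sigma)+2\cdot G$). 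Lemma~\ref{texnik}(ii) bounds $|\mathcal{E}_1|\le 2|\Sigma|$.

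For $\mathcal{E}_2$, I would observe, via Lemma~\ref{exception}(ii) combined with Lemma~\ref{L_S}(i), that any coset $c$ of $2\cdot G$ containing an $A\wedge^S B$-exception must satisfy $|S\cap c|=L_S/L(G)=|\Sigma|$. Let $\mathcal{C}$ be the family of cosets containing at least one element of $\mathcal{E}_2$; then $\Sigma$ and the sets $S\cap c$ for $c\in\mathcal{C}$ form a disjoint family inside $S$, yielding the double-counting bound
\[
|S|\ge |\Sigma|(1+|\mathcal{C}|).
\]

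Next I bound $|\mathcal{E}_2\cap c|$ for each $c\in\mathcal{C}$. Picking any $z_c\in\mathcal{E}_2\cap c$ and applying Lemma~\ref{xxl} a second time produces sets $A_c,B_c,S_c$ with $0\in A_c\cap B_c\cap S_c$, $L_{S_c}=L_S$, and $0$ a new exception. Writing $\Sigma_c=S_c\cap 2\cdot G$, Lemma~\ref{texnik}(i) gives $|\Sigma_c|=|\Sigma|$ and $L_{\Sigma_c}=|\Sigma_c|\,L(G)$, so Theorem~\ref{lower_2s} applies to $(A_c,B_c,\Sigma_c)$ and bounds the $A_c\wedge^{\Sigma_c}B_c$-exceptions by $2|\Sigma|$. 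The key observation is that every $A\wedge^S B$-exception lying in $c$ corresponds, after the translation by $-z_c$, to an $A_c\wedge^{S_c}B_c$-exception lying in $2\cdot G$, and every such element $\tilde z$ is \emph{automatically} a $\Sigma_c$-exception: for any representation $\tilde z=a_c+b_c$ one has $a_c-b_c=\tilde z-2b_c\in 2\cdot G$, which combined with $a_c-b_c\in S_c$ forces $a_c-b_c\in\Sigma_c$. Hence $|\mathcal{E}_2\cap c|\le 2|\Sigma|$, so
\[
|G\setminus(A\wedge^S B)|=|\mathcal{E}_1|+|\mathcal{E}_2|\le 2|\Sigma|+2|\mathcal{C}|\,|\Sigma|=2|\Sigma|(1+|\mathcal{C}|)\le 2|S|.
\]

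The delicate step is the automatic upgrade above: on each coset $c\in\mathcal{C}$ the translation makes every $S_c$-exception behave as a $\Sigma_c$-exception, which is exactly what permits Theorem~\ref{lower_2s} to be invoked coset-by-coset. Once that observation is secured, the double-counting inequality $|S|\ge|\Sigma|(1+|\mathcal{C}|)$ closes the argument with no further work.
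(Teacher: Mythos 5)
Your proof is correct, but it takes a genuinely different route from the paper's. The paper proves Theorem~\ref{general} by induction on $|S|$: after normalizing so that $0\in A'\cap B'\cap S'$ is an exception, it splits off $\Sigma=S'\cap 2\cdot G$, handles the $\Sigma$-exceptions by Lemma~\ref{texnik}(ii), establishes $L_{S'\setminus\Sigma}=L_{S'}$, and applies the induction hypothesis to $S'\setminus\Sigma$; the base case $|S|=1$ invokes Guo's Theorem~C. You instead treat all cosets of $2\cdot G$ simultaneously and avoid both the induction and Theorem~C: the key points are (a) that every exception $z$ forces $|S\cap(z+2\cdot G)|=L_S/L(G)=|\Sigma|$ via Lemmas~\ref{exception}(ii) and~\ref{L_S}(i), so the slices of $S$ over the ``active'' cosets are disjoint and give $|S|\ge|\Sigma|(1+|\mathcal{C}|)$, and (b) that after translating by an exception $z_c$ in a coset $c$, every exception of $c$ lands in $2\cdot G$ and is therefore automatically a $\Sigma_c$-exception, so Theorem~\ref{lower_2s} bounds each coset's contribution by $2|\Sigma|$. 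I checked the ``automatic upgrade'' in (b): for $\tilde z\in 2\cdot G$ one indeed has $a_c-b_c=\tilde z-2b_c\in 2\cdot G$, hence $a_c-b_c\in S_c\cap 2\cdot G=\Sigma_c$, and the hypotheses of Theorem~\ref{lower_2s} for $(A_c,B_c,\Sigma_c)$ follow from Lemma~\ref{texnik}(i) applied to the translated triple. What your approach buys is a self-contained proof (no appeal to Theorem~C, which the paper only uses for convenience) and, as a by-product, the per-coset information that each active coset of $2\cdot G$ carries exactly $L_S/L(G)$ elements of $S$ and at most $2L_S/L(G)$ exceptions --- essentially the decomposition the paper only extracts later in Theorem~\ref{tretze}. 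What the paper's induction buys is shorter bookkeeping, since it never needs the counting inequality $|S|\ge|\Sigma|(1+|\mathcal{C}|)$.
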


Notice that, for the sake of clarity, (the first step of the induction in)
the forthcoming proof relies on Theorem C, but it would be no problem
-- to be more precise, the very same proof, but in a drastically
simplified way -- to keep the paper self-contained and prove Theorem
\ref{general} without invoking it.

\begin{proof}

We shall prove the result by (finite) induction on the cardinality of $S$.

If $|S|=1$, then $L_S=L(G)$ and the result holds by Theorem C.

Assume now that the result is proved for any set of cardinality
$< \sigma$ for some integer $\sigma \leq |G|$ and let us consider
a set $S$ of cardinality $\sigma$.

We start by recalling that in the present situation, $A+B=G$.
We may also assume that there is at least one $A\wedge^S B$-exception,
say $z$, otherwise $A\wedge^S B=G$ and there is nothing more to prove.

By Lemma \ref{xxl} (i), we can assume that $z=2b+s$ for some $s\in S$ and
$b\in (A-s)\cap B$.
By replacing $A$, $B$ and $S$ respectively by $A'=A-b-s$,
$B'=B-b$ and $S'=S-s$ we have that $0\in A'\cap B'\cap S'$,
$0$ is an $A'\wedge^{S'} B'$-exception and that
$L_S=L_{S'}(0)=L_{S'}$. Moreover we also have $A' + B' =G$.

We denote $\Sigma = S' \cap 2 \cdot G$ and notice that $0\in \Sigma$.
We distinguish three cases.

If $\Sigma=S'$, then by Lemma \ref{texnik} (i), we can apply
Theorem \ref{lower_2s}, which gives the result. From now on,
assume $\Sigma \neq S'$.

If $A'\wedge^{(S'\setminus \Sigma)} B'=G$ then, by Lemma \ref{texnik} (iii), we obtain that
$$
G\setminus (A'\wedge^{S'} B')= G \setminus (A'\wedge^{\Sigma} B').
$$
Lemmas \ref{xxl} and \ref{texnik} (ii) now yield
$$
|A \wedge^S B| = |A' \wedge^{S'} B' | = |A'\wedge^{\Sigma} B'|
\ge |G|-2|\Sigma|\ge |G|-2 |S'|=|G|-2|S|,
$$
and the result is proved.

Or, there exists an exception
$z' \in G\setminus (A'\wedge^{(S'\setminus \Sigma)} B')$.
We first check that
\begin{equation}
\label{lesl}
L_{S' \setminus \Sigma}= L_{S'}.
\end{equation}
Indeed,  by Lemma
\ref{texnik} (iii) $z'$ is an $A'\wedge^{S'} B'$-exception, and by Lemma
\ref{exception} (ii), $\nu (z')=L_{S'}=L_{S'}(z')$. But, since $z' \notin A' \wedge^{S' \setminus \Sigma} B'$,
for any $a \in A'$, $b \in B'$ such that $z'=a+b$, we have
$a-b \in S' \setminus \Sigma$. Thus
$\nu (z)\le L_{S' \setminus \Sigma} (z') \le L_{S' \setminus \Sigma}\le  L_{S'}$ which implies
equality \eqref{lesl}.

By \eqref{lesl}, we have $|A'|+|B'|=|G|+L_{S' \setminus \Sigma}$.
Since $0\in\Sigma\varsubsetneq S'$,
we have $1 \leq |S' \setminus \Sigma | < \sigma = |S' |$, and we may use the
induction hypothesis which implies that
$$
|G\setminus (A'\wedge^{S'\setminus \Sigma} B')|
\le 2|S'\setminus \Sigma|.
$$
Thus, using Lemma \ref{texnik} (iii) and (ii), we obtain that
\begin{eqnarray*}
|G\setminus (A\wedge^S B) |& = & |G\setminus (A'\wedge^{S'} B') | \\
                        & = & |G\setminus (A'\wedge^{\Sigma} B')|+
|G\setminus (A'\wedge^{(S'\setminus \Sigma)} B')|\\
& \le& 2|\Sigma| +2|S'\setminus \Sigma|\\
& =& 2|S'|\\
&=& 2|S|
\end{eqnarray*}
which finishes the induction step and the proof.
\end{proof}

\noindent {\bf Examples.} {\it
From the characterization given in \cite{Guo09},
we construct the first example. By a slight modification
we can then generate the two other examples,
which show that the bound of Theorem \ref{general} is tight.

\begin{enumerate}
\item Let $G=\mathbb{Z}/15\mathbb{Z}$, we have that $G_0=0$.
If $S=\{0\}$ then $L_S=L(G)=1$. Let us consider $H=\subgp{5}$,
$A=A_0\cup (2+H)$ and $B=B_0\cup (2+H)\cup (4+H)\cup (1+H)$,
where $A_0=B_0=\{0,5\}$. In particular, $|A|+|B|=|G|+L_S$. Then
$$
\nu_{A+B}(u)=\left\{\begin{array}{lc}
                       |A_0|+|H|, & \hbox{if} \ u\in 1+H;\\
                       2|A_0|, & \hbox{if} \ u\in 2+H;\\
                       |H|, & \hbox{if} \ u\in 3+H;\\
                       |A_0|+|H|, & \hbox{if} \ u\in 4+H;\\
                     \end{array}\right. \hspace{0.5cm}
\hbox{and} \ A_0\wedge^{S} B_0=\{5\}.
$$

Thus, the set of $A\wedge^S B$-exceptions is $\{0,10\}$.

\item Let $G=\mathbb{Z}/30\mathbb{Z}$, we have that $G_0=\{0,15\}$.
If $S=\{0,15\}$ then $L_S=L(G)$. Let us consider $H=\subgp{5}$,
$A=A_0\cup (2+H)$ and $B=B_0\cup (2+H)\cup (4+H)\cup (1+H)$,
where $A_0=B_0=\{0,5,15,20\}$. In particular, $|A|+|B|=|G|+L_S$. Then
$\nu_{A+B}(u)>L_S$ for $u\notin H$ and
$$
A_0\wedge^S B_0=\{5,20\}.
$$
Thus, the set of $A\wedge^S B$-exceptions is $\{0,10,15,25\}$.

\item Let $G=\mathbb{Z}/45\mathbb{Z}$,
we have that $G_0=0$. If $S=\{0,15,30\}$ then $L_S=3|G_0|$
(clearly, since $3|G_0|\ge L_S\ge L_S(0)=3$).
Let us consider $H=\subgp{5}$, $A=A_0\cup (2+H)$ and
$B=B_0\cup (2+H)\cup (4+H)\cup (1+H)$,
where $A_0=B_0=\{0,5,15,20,30, 35\}$.
In particular, $|A|+|B|=|G|+L_S$.
Then $\nu_{A+B}(u)>L_S$ for $u\notin H$ and
$$
A_0\wedge^S B_0= \{5,20,35\}.
$$
Thus, the set of $A\wedge^S B$-exceptions is $\{0,10, 15,25,30,40\}$.
\end{enumerate}
}

\section{The critical sets for Abelian groups. Case $L_S=|S|\ L(G)$.}

In what follows, instead of $G_0$ we will write $K(G)$. That is, $K(G)=\{g\in G: \ 2g=0\}.$ We start with a lemma.

\begin{lemma}
\label{lemma_lower_2s}
Let $A,B$ and $S$ be subsets, containing $0$,
of a finite Abelian group $G$. Assume that $|A|+|B|=|G|+L_S$,
that  $0$ is an $A\wedge^{S} B$-exception and that $L_S=|S|\ L(G)$.
Let $z$ and $z'$ be two $A\wedge^{S}B$-exceptions
such that $z \notin S\cup -S$, $z' \in S\Delta -S$ and
$z'-z\notin S\cap -S$. Then,
$2z-2z'\in (S\Delta -S)\cap (A\wedge^{S}B)$.
\end{lemma}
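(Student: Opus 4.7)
My plan is to establish both parts of the conclusion: $w := 2z - 2z' \in S \Delta -S$ and $w \in A \wedge^S B$. The common setup comes from Corollary~\ref{coroxxl}: write $z = 2c$ and $z' = 2c'$ with $c, c' \in A \cap B$; since swapping $A$ with $B$ exchanges $S$ with $-S$ and preserves the hypotheses and conclusion, I may assume WLOG $z' \in S \setminus -S$. Set $d := z' - z = 2(c' - c)$; by Lemma~\ref{PofTheorem7}(i), $d \in A \cap B$. Using Lemma~\ref{xxl}(iv), every exception admits exactly $L(G)$ representations with $a - b = s$ for each $s \in S$: applied to $z$ with $s = z'$ this produces $c + c' + K(G) \subset A$ and $c - c' + K(G) \subset B$; applied to $z'$ with $s = z'$ it produces $z' + K(G) \subset A$ and $K(G) \subset B$. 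As a first observation, $w \neq 0$: if $w = 0$, then $z - z' \in K(G)$, and the pair $(2c', -d) \in A \times B$ (since $2c' \in A$ and $-d \in K(G) \subset B$) is a representation of $z$ with $a + b = 2c' - d = z$ and $a - b = 2c' + d = 2z' - z = z$ (the last equality uses $2(z - z') = 0$, so $z' + (z-z') = z' - (z-z') = z$). Since $z \notin S$, this representation puts $z \in A \wedge^S B$, contradicting $z$ being an exception.

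Granted that $w \in S \Delta -S$ (the main substance), the other conclusion $w \in A \wedge^S B$ follows by assuming to the contrary that $w$ is an exception. The $s = 0$ representation of $w$ gives $w/2 + K(G) \subset A \cap B$, and choosing the half $w/2 = -d$ of $w$ puts $-d \in A \cap B$. Both $d$ and $-d$ are then in $A \cap B$ and are distinct (as $2d = -w \neq 0$), so Lemma~\ref{clau} applied with $b = 0$ and $x = d$ has LHS $= 4$, forcing $2d \in S \cap -S$; hence $w = -2d \in S \cap -S$, contradicting $w \in S \Delta -S$.

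The main obstacle is establishing $w \in S \Delta -S$, equivalently $w \in S \cup -S$ and $w \notin S \cap -S$. My plan is to combine two applications of Lemma~\ref{clau}: first with $b = c$ (valid since $c \in A \cap B$ and $0, 2c = z$ are both exceptions) and $x = c'$, where $2x = z' \in S \Delta -S$ forces the membership sum of $\{c - c', c + c'\}$ in $A$ and $B$ to be $3$; combined with the setup memberships $c + c' \in A$ and $c - c' \in B$, this pins down exactly one of $c - c' \in A$ or $c + c' \in B$. Second, with $b = c'$ and $x = c$: here $2x = z \notin S \cup -S$ forces the corresponding sum for $\{c' - c, c + c'\}$ to be $2$, further constraining the memberships. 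A careful case analysis of the resulting scenarios, together with Corollary~\ref{CoroPofThm7} (which confines exceptions to $S \cup -S \cup (z + S \cap -S)$) and Lemma~\ref{xxl}(iv) applied to further $s \in S$, should rule out both $w \notin S \cup -S$ and $w \in S \cap -S$, leaving $w \in -S \setminus S \subset S \Delta -S$. The subtlety will lie in tracking the case analysis and invoking the hypothesis $z' - z \notin S \cap -S$ at precisely the right point to exclude the degenerate configurations that would place $w$ outside $S \Delta -S$.
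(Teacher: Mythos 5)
Your setup is sound: the reduction to $z'\in S\setminus -S$ by swapping $(A,B,S)\mapsto (B,A,-S)$ is legitimate, the memberships extracted from Lemma \ref{xxl}(iv) are correct, the observation $w=2z-2z'\neq 0$ is a valid (and not strictly necessary, but harmless) remark, and your argument that \emph{if} $w\in S\Delta -S$ then $w$ cannot be an exception (via the $s=0$ representation of $w$ and Lemma \ref{clau} at $b=0$, $x=z'-z$) is correct and matches the closing step of the paper's proof in spirit. The problem is that the core of the lemma --- the claim $w\in S\Delta -S$, which you yourself call ``the main substance'' --- is never proved: you only describe a plan and assert that ``a careful case analysis \dots should rule out'' the bad cases. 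That is a genuine gap, not a compressed proof.

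Moreover, the plan as stated targets the wrong elements. Whether $w\in S$ is governed by writing $0=(z-z')+(z'-z)$ as an element of $A+B$ and using that $0$ is an exception: since $z'-z\in A\cap B$ is already known, one gets $w\in S$ precisely from $z-z'\in A$, and $w\in -S$ from $z-z'\in B$. So the decisive memberships are those of $z-z'$ and $z'$ in $A$ and $B$, which the paper extracts by applying Lemma \ref{clau} at the point $b=c$ with $x=2c'-c$, so that $\{b-x,b+x\}=\{z-z',\,z'\}$ (together with a preliminary argument that $z'\in A\cup B$, which is where the hypothesis $z'-z\notin S\cap -S$ enters). Your two proposed applications, at $(b,x)=(c,c')$ and $(c',c)$, instead constrain the memberships of $c\pm c'$, which are \emph{halves} of $z\pm z'$; there is no visible route from those constraints to the membership of $z-z'$ itself, so even the sketch does not point at a working argument. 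To complete the proof you would need to redo the analysis around $x=2c'-c$ (establish $z'\in A\cup B$, split into the cases $z'\in A$ and $z'\in B$, show $z-z'\in A\setminus B$ in the first case, conclude $w\in S$ and then $w\notin -S$), essentially as in the paper.
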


\begin{proof}
Slightly more precisely, we shall in fact prove that
if $\epsilon=-1$ or $1$ and $z'\in \epsilon (S\setminus -S)$,
then $2z-2z'\in \epsilon (S\setminus -S)\cap (A\wedge^{S}B)$.

By Corollary \ref{coroxxl}, we may assume that $z=2b$ and $z'=2b'$
for some $b, b' \in A\cap B$.
Recall first that, by Lemma \ref{PofTheorem7}, we have
\begin{equation}
\label{belle}
z'-z = 2b'-2b\in A\cap B.
\end{equation}

For all this proof, we define $r=2b'-b$.

We first prove that
\begin{equation}
\label{deuxbprime}
z'\in A\cup B.
\end{equation}

Indeed, suppose to the contrary that
$b+r  \notin A\cup B$, therefore since $2b$ is an $A\wedge^{S}B$-exception,
Lemma \ref{clau} implies that $b-r=2b-2b'\in A\cap B$.
But then, the equalities
$$
(2b-2b')+(2b'-2b)=(2b'-2b)+(2b-2b')=0
$$
giving two ways to write $0$ as an element of $A+B$,
by using \eqref{belle}, imply, since $0$ is an $A\wedge^{S}B$-exception,
that $4b-4b'$ and $4b'-4b$ are in $S$, that is $2(2b'-2b)\in S\cap -S$.
Thus, by Lemma \ref{clau},
we obtain $2b'-b= b+(2b'-2b) \in  A\cap B$.
This implies in turn, in view of the writing of the
$A\wedge^{S}B$-exception $2b'$ as an element of
$A+B$ in the form $(2b'-b)+b$, that $2b'-2b \in S$.
In a similar fashion, we obtain $2b-2b'\in S$. Therefore,
$z-z'\in S\cap -S$. Since this is a contradiction with an
assumption of our statement, \eqref{deuxbprime} is proved.

We are now reduced to study two cases.
\smallskip

\noindent \underline{Case 1}: $z' \in A$.

Writing $2b'=2b'+0\in A+B$ and since $2b'$
is an $A\wedge^{S}B$-exception, we obtain that
$2b' \in S$ and thus
$$
z' \in S\setminus -S
$$
(recall that in all this proof, we  assume that
$z'\in S\Delta -S$).
This implies that
$$
b+r=2b'=z'\notin B,
$$
since otherwise it follows from the writing of the
$A\wedge^{S}B$-exception $2b'=0+2b'\in A+B$ that
$2b' \in -S$, a contradiction to $2b' \in S\setminus -S$.

By Lemma \ref{clau},
\begin{equation}
\label{eqeq}
|A\cap \{b-r,b+r\}|+|B\cap \{b-r\}| =
|A\cap \{b-r,b+r\}|+|B\cap \{b-r,b+r\}|\in\{2,3\}.
\end{equation}
Note that, $b-r \neq b+r$ since otherwise we would have
$b-r=b+r \notin B$ and the left-hand side of the preceding
formula would be equal to $1$.
In particular, by Lemma \ref{clau}, we obtain that
\begin{equation}
\label{ratty}
\displaystyle r\notin \mathcal{H}(S\cap -S).
\end{equation}

We now prove that
\begin{equation}
\label{zz'}
z-z' \notin B.
\end{equation}
Indeed if, to the contrary, $2b-2b'\in B$ then the writing
$2b$ as the sum $2b'+(2b-2b')$ in $A+B$ implies,
since $2b$ is an $A\wedge^{S}B$-exception, that $2r = 4b'-2b\in S$.
In view of \eqref{ratty}, we obtain that $2r=s$ for some
$s \in S\setminus -S$. By Lemma \ref{clau}, we derive
$$
|A\cap \{b-r,b+r\}|+|B\cap \{b-r,b+r\}|=3
$$
hence, since $b+r \not\in B$, we obtain that  $2b-2b'=b-r \in A\cap B$.
We are now back to the situation of the proof of assertion \eqref{deuxbprime}.
Proceeding in a similar way, we obtain consecutively that
$2z-2z' \in S\cap -S$, $2b'-b \in A\cap B$, $2b'\in A+B$
and finally $2b'-2b \in S$. With $2b-2b'\in S$, which holds by symmetry,
the contradiction $z-z'\in S\cap -S$ follows and \eqref{zz'} is proved.

Relation \eqref{zz'} can be rewritten as $b-r=2b-2b'\notin B$.
Using $b+r \not\in B$, we see that the left-hand side of \eqref{eqeq}
must be equal to $2$
and we obtain, by Lemma \ref{clau}, that
\begin{equation}
\label{2B2B'}
2b-2b'=b-r \in A\setminus B.
\end{equation}
Thus, using this, \eqref{belle} and the writing $(2b-2b') +(2b'-2b)=0$
yields, since $0$ is an $A\wedge^{S}B$-exception,
$$
4b-4b'\in S.
$$
Hence, we must have
$$
2z-2z' = 4b-4b'\in S\setminus -S,
$$
otherwise $4b-4b'\in S\cap -S$, that is $2(2b-2b')\in S\cap -S$.
Therefore, using that $0$ is an $A\wedge^{S}B$-exception,
Lemma \ref{clau} implies that $2b-2b'\in  A\cap B$,
a contradiction with \eqref{2B2B'}.

Note that, in the present case, if $2z-2z'$ is an
$A\wedge^{S}B$-exception, then by Corollary \ref{coroxxl},
it is of the form $2b''$ for some $b''\in A\cap B$. Thus, using
Lemma \ref{clau}, it follows that $2b-2b' \in b'' + K(G) \subset A \cap B$,
a contradiction.

\smallskip

\noindent \underline{Case 2}: $z' \in B$.
This case is analogous.
\smallskip

Finally, the study of these two cases implies the result.

\end{proof}

The next proposition gives more information
on the structure of $S$ and the set of $A\wedge^{S}B$-exceptions
when $|A\wedge^S B|=|G|-2|S|$.

\begin{proposition}
\label{proof_lower_2s}
Let $A,B$ and $S$ be non-empty subsets of a finite Abelian group $G$ with
$|A|+|B|=|G|+L_S$ and $L_S=|S|\ L(G)$.
If $|A\wedge^S B|=|G|-2|S|$ then
\begin{itemize}
\item[(i)] $S-S\subset 2\cdot G$
\end{itemize}
Moreover, for any $s$ in $S$,
\begin{itemize}
\item[(ii)] we have $-(S-s)=S-s$ and
\item[(iii)] the set of $A\wedge^{S}B$-exceptions can be partitioned
in the form $(z_1+ S)\cup (z_2+S)$, for some $z_1,z_2\in G$.
\end{itemize}
\end{proposition}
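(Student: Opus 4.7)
The plan is to get (i) for free from Lemma \ref{L_S}, and then to derive (ii) and (iii) together by a single normalization argument that reduces to the case $s=0$ with $0$ an exception belonging to $S$, after which the combinatorics of the $2|S|$ exceptions, combined with Corollary \ref{CoroPofThm7} and Lemma \ref{lemma_lower_2s}, forces $S=-S$ and exhibits the exception set as a union of two $S$-cosets.

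For (i), the hypothesis $L_S = |S|\,L(G)$ together with Lemma \ref{L_S}(v) tells us that $S$ lies in a single coset modulo $2\cdot G$, so $S-S \subset 2\cdot G$.

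For (ii) and (iii), fix $s \in S$. Since $|A\wedge^S B|<|G|$ there is at least one $A\wedge^S B$-exception, and Lemma \ref{xxl}(iv) lets us choose one of the form $z_0 = 2b+s$ with $b\in (A-s)\cap B$. Passing to $A'=A-b-s$, $B'=B-b$, $S'=S-s$ we obtain $0\in A'\cap B'\cap S'$, $0$ an $A'\wedge^{S'}B'$-exception, and all the standing hypotheses preserved, with $|A'\wedge^{S'}B'|=|G|-2|S'|$. It suffices to prove in this normalized setting that $-S'=S'$ and that the set of $A'\wedge^{S'}B'$-exceptions is a disjoint union of two translates of $S'$: translating back by $2b+s$ gives (ii) (since $-(S-s)=S-s$) and (iii).

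Next, I claim that some exception $z$ must sit outside $S'\cup -S'$. Indeed $0\in S'\cap -S'$, so $|S'\cup -S'|=2|S'|-|S'\cap -S'|\le 2|S'|-1<2|S'|$, and there are $2|S'|$ exceptions. Pick such a $z$ and apply Corollary \ref{CoroPofThm7}: all exceptions lie in $(S'\cup -S')\cup(z+(S'\cap -S'))$, a set of size at most $2|S'|$. The equality $|G\setminus(A'\wedge^{S'}B')|=2|S'|$ therefore forces this inclusion to be an equality, the two pieces to be disjoint, and every element of $S'\cup -S'$ to be an exception. Now suppose, for a contradiction, that $S'\ne -S'$; pick any $z'\in S'\Delta -S'$. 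Then $z'$ is an exception lying in $S'\cup -S'$, and the just-established disjointness yields $z'-z\notin S'\cap -S'$. The hypotheses of Lemma \ref{lemma_lower_2s} are met, so $2z-2z'\in (S'\Delta -S')\cap (A'\wedge^{S'}B')$. But $S'\Delta -S'\subset S'\cup -S'$ consists entirely of exceptions, contradicting $2z-2z'\in A'\wedge^{S'}B'$. Hence $S'=-S'$.

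With $S'=-S'$, the description of the exceptions collapses to $S'\cup(z+S')$, a disjoint union of two $S'$-translates by the cardinality count. Translating back gives $G\setminus(A\wedge^S B)=(2b+S)\cup(2b+s+z+S)$, which is the partition required in (iii). The main obstacle is the application of Lemma \ref{lemma_lower_2s} to rule out $S'\ne -S'$: one must verify that the chosen $z$ and $z'$ satisfy all three constraints ($z\notin S'\cup -S'$, $z'\in S'\Delta -S'$, $z'-z\notin S'\cap -S'$), and this is precisely where the disjointness forced by the extremal cardinality assumption does the work.
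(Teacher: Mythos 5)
Your proposal is correct and follows essentially the same route as the paper: normalize via Lemma \ref{xxl}(iv) so that $0\in A'\cap B'\cap S'$ is an exception, use the count $|S'\cup -S'|<2|S'|$ to find an exception $z$ outside $S'\cup -S'$, force equality and disjointness in Corollary \ref{CoroPofThm7}, and invoke Lemma \ref{lemma_lower_2s} to kill $S'\Delta -S'$ (the paper derives (i) from Lemma \ref{L_S}(vi) after normalizing, while you read it off Lemma \ref{L_S}(v) directly, which is equally valid). The only blemish is a harmless translation slip at the end: the second coset is $2b+z+S$, not $2b+s+z+S$, which does not affect statement (iii).
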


\begin{proof}
Let us choose an arbitrary $s$ in $S$.
Since $A+B=G$, the cardinality condition implies that there are
exactly $2|S|$ $A\wedge^SB$-exceptions. Let $w$ be one of them.
By Lemma \ref{xxl} (iv), $w$ can be written as $2b+s$
for some $b \in (A-s) \cap B$.
Let $A'=A-b-s$, $B'=B-b$ and $S'=S-s$. By Lemma \ref{xxl} (iii),
$L_{S}=L_{S'}(0)$ and since we are assuming that $L_{S}=|S|\ L(G)$,
Lemma \ref{L_S} (vi) implies that $S-s=S'\subset 2 \cdot G$. Taking
the union over all possible $s$ implies (i).

In order to prove the equality (ii), notice first that the set
of $A'\wedge^{S'}B'$-exceptions cannot be included in $S'\cup -S'$
since in this case its cardinality would be at most $|S'\cup -S'| < 2
|S'|$, a contradiction.

Let $z$ be an $A'\wedge^{S'}B'$-exception outside $S'\cup -S'$. By
Corollary \ref{CoroPofThm7},
$$
G \setminus (A'\wedge^{S'}B') \subset S'\cup -S' \cup (z+S'\cap -S').
$$
But since the right-hand side has a cardinality at most $|S' \cup -S'|
+ |S'\cap -S'|=2|S'|$ we obtain, using the assumption on the
cardinality of the set of $A'\wedge^{S'}B'$-exceptions, the equality
$$
G \setminus (A'\wedge^{S'}B') = S'\cup -S' \cup (z+S'\cap -S')
$$
and that this union is disjoint. It follows that we can partition
the set of $A'\wedge^{S'}B'$-exceptions as
\begin{equation}
\label{partitionzz}
G \setminus (A'\wedge^{S'}B') =
(S'\cap-S') \cup (S'\Delta -S') \cup (z+S'\cap-S').
\end{equation}

If $S'\Delta -S'$ is empty then $S'=-S'$ and the result is
proved. Otherwise let $z' \in S'\Delta -S'$. By partition
\eqref{partitionzz}, it is not in
$z+S'\cap -S'$ and we may apply Lemma \ref{lemma_lower_2s} which
implies that $2z-2z'$ is an element of $S'\Delta -S'$ not in
the set of $A'\wedge^{S'}B'$-exceptions, contrarily to the partition \eqref{partitionzz}.
This proves (ii).

With (ii), Corollary \ref{CoroPofThm7} and the assumption
$|G\setminus (A'\wedge^{S'}B')|=2|S|$ yield
$$
G \setminus (A'\wedge^{S'}B') = S' \cup (z+ S').
$$
Thus
$$
G \setminus (A\wedge^{S}B) = G \setminus (A'\wedge^{S'}B') + w
= (S' \cup (z+ S'))+w = 
(2b+S) \cup (2b+z+S),
$$
on recalling that $w=2b+s$.
\end{proof}

The next lemma will be useful.

\begin{lemma}
\label{S_general}
Let $G$ be a finite Abelian group. Let $S$ be a subset of $G$
such that, for each $s\in S$, $S-s=-(S-s)$. Then
one of the following happens:
\begin{itemize}
\item[(i)] $S$ is a coset.
\item[(ii)] $S-S$ is contained in $K(G)$.
\item[(iii)] There exists a group $H \leq G$ such that
$S=(s_1+H)\cup (s_2+H)\cup \ldots \cup (s_k+H)$ and
for any $i,j\in \{1,\ldots, k\}$ we have $2(s_i-s_j)\in H$.
\end{itemize}
\end{lemma}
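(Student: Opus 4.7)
The plan is to reduce to the case $0 \in S$ and then analyse the stabiliser of $S$. I would first fix any $s_0 \in S$ and replace $S$ by $T := S - s_0$, so that $0 \in T$. The hypothesis transfers to $T$: for each $t = s - s_0 \in T$, one computes $T - t = S - s = -(S - s) = -(T - t)$. Taking $t=0$ gives $T = -T$.

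Next comes the key step. For any $t \in T$, the identity $T - t = -(T-t) = -T + t = T + t$ (using $T = -T$) rearranges as $T + 2t = T$, so $2t$ belongs to the stabiliser $H := \{ g \in G : T + g = T \}$, a subgroup of $G$. In particular $T$ is a disjoint union of $H$-cosets $T = (t_1 + H) \cup \cdots \cup (t_k + H)$, with $2t_i \in H$ for each $i$. Translating back, $S = (s_1 + H) \cup \cdots \cup (s_k + H)$ with $s_i = t_i + s_0$, and $2(s_i - s_j) = 2t_i - 2t_j \in H$ for all $i,j$.

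The three conclusions then fall out from a split on $k$ and on $H$. If $k = 1$, $S$ is a single coset of $H$ and we are in case (i). If $k \geq 2$, the coset decomposition together with the relation $2(s_i - s_j) \in H$ is exactly case (iii). Conclusion (ii) appears as a natural specialisation: when $H = \{0\}$, each $t_i$ satisfies $2t_i = 0$, hence $t_i \in K(G)$, and therefore $S - S = T - T \subseteq K(G)$.

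The only delicate point, and the likely source of bookkeeping errors, is verifying that translation leaves invariant everything that matters — the stabiliser, the coset structure, and the set of differences $s_i - s_j$ — so that the three conclusions really concern the original set $S$ and not just its translate $T$. There is no genuine obstacle here; the whole argument is a direct manipulation of the symmetry assumption combined with the observation that doubles of elements of $S$ land in the stabiliser.
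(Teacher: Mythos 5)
Your proof is correct, and it takes a genuinely different, more elementary route than the paper. The paper first rewrites the hypothesis as $S=-S+2\cdot S$ and splits on whether $|2\cdot S|=|S|$: in that case a short computation shows $S$ is a coset, and otherwise it invokes Kneser's theorem on the sumset $-S+2\cdot S$ to produce the stabilizer $H$ together with $|2\cdot S+H|=|H|$ and $|S+H|=|S|$, from which the coset structure and the relations $2(s_i-s_j)\in H$ follow; the trichotomy then comes from the cases $H=\{0\}$ and $H\neq\{0\}$. You instead normalize to $T=S-s_0\ni 0$, note $T=-T$, and deduce directly from $T-t=-(T-t)=-T+t=T+t$ that $T=T+2t$, i.e.\ $2t$ lies in the stabilizer $H=\{g\in G:\ T+g=T\}$ for every $t\in T$. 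This one-line computation delivers everything at once without Kneser: $S$ is a union of $H$-cosets with $2t_i\in H$, hence $2(s_i-s_j)\in H$, and the split on $k=1$ versus $k\ge 2$ and on $H$ trivial versus nontrivial recovers (i), (iii) and (ii). Your version in fact gives the slightly stronger conclusion that $2\cdot S$ lies in a single $H$-coset (namely $2s_0+H$), which is exactly the extra information the paper extracts from Kneser via $|2\cdot S+H|=|H|$; the only thing the paper's route buys is the reuse of a theorem it has already quoted for other purposes, whereas your argument is shorter and self-contained.
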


\begin{proof}
First, notice that the assumption is equivalent to the fact that
for any $s \in S$, $S=-S+2s$, in other words
\begin{equation}
\label{Ss}
S=-S+ 2 \cdot S.
\end{equation}

Assume first that $|2 \cdot S|=|S|$. In this case, \eqref{Ss} implies
for any $s \in S$, that $-s+2 \cdot S \subset S$. By the assumption on
the cardinalities of these two sets, we then obtain $2 \cdot S = S+s$
and finally $2\cdot S=2S$. Choose any $s$ in $S$ and write $S'=S-s$ so
that $0$ is in $S'$, we get $S'=-S'$ and $2S' = 2 \cdot S' = S'$.
Therefore, $S'$ is a subgroup of $G$ and $S$ is a coset.

Suppose now that $|2 \cdot S|<|S|$, in particular $K(G)\neq 0$.
The trivial inequality $|S|\le |2 \cdot S|+|S|-1$ implies,
by Theorem \ref{kneser}, that there exists a group $H$ (namely
the stabilizer of $S$) such that
$$
|S|= |-S+ 2 \cdot S|=|2 \cdot S+H|+|-S+H|-|H| =|2 \cdot S+H|+|S+H|-|H|.
$$
In other words, the sum of two non-negative terms
$(|2 \cdot S+H|-|H|)+(|S+H|-|S|)$ is equal to $0$.
Hence, we obtain that $|2 \cdot S+H|=|H|$ and $|S+H|=|S|$, in particular, this means that
$S$ is composed of (full) cosets modulo $H$.

If $H=\{ 0 \}$, then for any $s \in S$, we get $2 \cdot(S-s)=\{ 0 \}$ and
thus $S-s\subset K(G)$ which implies $S-S \subset K(G)$.

Suppose now $H\neq \{ 0 \}$. Let
$S=(s_1+H)\cup (s_2+H)\cup \cdots \cup (s_k+H)$ be the decomposition
of $S$ into $H$-cosets ($H$-tiling). We have
$2\cdot S=(2s_1+H)\cup (2s_2+H)\cup \cdots \cup (2s_k+H)$. Using now
$|2 \cdot S+H|=|H|$, we deduce that
$$
(2s_1+H) = (2s_2+H)= \cdots = (2s_k+H),
$$
that is, $2(s_i-s_j)\in H$ for any pair $i,j\in \{1,\ldots, k\}$.
\end{proof}

The next example shows that a set $S$ that verifies the hypothesis
of Lemma \ref{S_general} it is not necessarily a coset.

\noindent {\bf Example.} {\it
Let $G=\mathbb{Z}/4\mathbb{Z}\oplus \mathbb{Z}/2\mathbb{Z}
$ with generators $a,b$ and relations
$4a=2b=0$. Note that $K(G)=\{0,2a,b,2a+b\}$.
Let us now consider the set
$S=H_0\cup (a+H_0)\cup (a+b+H_0)=H_0\cup (a+K(G))$,
where $H_0=\subgp{2a}=\{ 0,2a \}$. Then we may check that,
for each $s\in S$, we have $-(S-s)=S-s$
(or, equivalently, $S=-S+2\cdot S$) while $S$ is not a coset.
}

We are now ready to study the critical sets $S$ of
Theorem \ref{lower_2s}.

\begin{theorem}
\label{exceptions}
Let $A,B$ and $S$ be non-empty subsets of a finite Abelian
group $G$. Assume that $|A|+|B|=|G|+L_S$, where $L_S=|S|\ L(G)$ and
that $|A\wedge^S B|= |G|-2|S|$. Then
\begin{itemize}
\item[(i)]  there exists a subgroup $H$ of
$2 \cdot G$ such that $S$ is a coset modulo $H$, and
\item[(ii)] $G\setminus (A\wedge^S B )$ is a union of two cosets modulo $H$.
\end{itemize}
\end{theorem}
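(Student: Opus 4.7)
The plan is to combine the normalization from Lemma \ref{xxl} with the structural information of Proposition \ref{proof_lower_2s}, then classify $S$ via Lemma \ref{S_general}, and finally handle the non-coset cases using Lemma \ref{clau} together with cardinality arguments on $A \cap B$.

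First, by Lemma \ref{xxl}(i), choose an $A \wedge^S B$-exception $w$ and write it as $w = 2b + s$ with $s \in S$ and $b \in (A-s)\cap B$. Translating by $A \mapsto A - b - s$, $B \mapsto B - b$, $S \mapsto S - s$ produces data with $0 \in A \cap B \cap S$ and $0$ an $A \wedge^S B$-exception, while preserving coset structure; the original statement follows from the statement for the translated data. Applied in this setting, Proposition \ref{proof_lower_2s} yields $S \subset 2 \cdot G$ (from (i) combined with $0 \in S$), $S = -S$ (from (ii) with $s = 0$), and a disjoint decomposition $G \setminus (A \wedge^S B) = S \cup (z + S)$ for some $z \notin S - S$. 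Since the hypothesis of Lemma \ref{S_general} holds for $S$, the latter falls into one of three cases. If $S$ is a coset, then $0 \in S$ forces $S$ to be a subgroup $H$, which by the first inclusion lies in $2 \cdot G$, and then $H \cup (z + H)$ is already a union of two $H$-cosets, as required. In the remaining two cases, $S$ is not a priori a coset and an additional argument is needed.

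To upgrade $S$ to a coset in cases (ii) and (iii) of Lemma \ref{S_general}, I would apply Lemma \ref{clau} at $b = 0$ (valid since $0 \in A \cap B$ and $0$ is an exception). Using $S = -S$, the formula there forces $\{-x, x\} \subset A \cap B$ whenever $2x \in S \setminus \{0\}$ and $x \in A \cap B$ whenever $x \in K(G)$, hence $\mathcal{H}(S) \subset A \cap B$. Now every $b \in \mathcal{H}(S) \subset A \cap B$ satisfies $2b \in S \subset G \setminus (A \wedge^S B)$, so a second application of Lemma \ref{clau} at $b$ yields $b + \mathcal{H}(S) \subset A \cap B$, and thus $\mathcal{H}(S + S) = \mathcal{H}(S) + \mathcal{H}(S) \subset A \cap B$ (the equality relies on $S \subset 2 \cdot G$). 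Anchoring the same iteration at some $b_z \in A \cap B$ with $2 b_z = z$ (which exists by Corollary \ref{coroxxl}) gives $\mathcal{H}(z + S + S) = b_z + \mathcal{H}(S + S) \subset A \cap B$. Combining the resulting lower bound on $|A \cap B|$ with the upper bound $|A \cap B| \le (|A| + |B|)/2 = (|G| + L_S)/2$ should, after careful numerical analysis, force $|S + S| = |S|$, which together with $0 \in S$ and $S = -S$ means $S$ is a subgroup, putting us back in case (i). In case (iii), where $S$ has a non-trivial stabilizer $H_0$, one may also exploit the quotient $\bar S := S/H_0 \subset K(G/H_0)$, which has trivial stabilizer, reducing to case (ii) in $G/H_0$.

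The hardest step is the closing numerical argument: showing that the lower bound on $|A \cap B|$ really rules out $|S + S| > |S|$ whenever $S$ is not already a coset. For small $|S|$ the naive cardinality bound is too weak, and one likely needs to combine the disjointness of $\mathcal{H}(S + S)$ and $\mathcal{H}(z + S + S)$ (which follows from $z \notin (S + S) - (S + S)$ together with the torsion structure of $2 \cdot G$) with a refined Kneser-type analysis of $S + S$ and, in case (iii), an iterative reduction through the successive stabilizers. Once $S$ has been shown to be a subgroup $H \subset 2 \cdot G$, the description of $G \setminus (A \wedge^S B)$ as a union of two $H$-cosets is an immediate consequence of Proposition \ref{proof_lower_2s}(iii).
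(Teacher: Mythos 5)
Your normalization, the use of Proposition \ref{proof_lower_2s} to obtain $S\subset 2\cdot G$, $S=-S$ and the decomposition of the exception set, and the appeal to Lemma \ref{S_general} all match the paper's proof, and the case where $S$ is a coset is handled correctly. The gap is in the step that upgrades $S$ to a coset in cases (ii) and (iii) of Lemma \ref{S_general}: this is the heart of the theorem, and your proposed route does not close. The inclusions $\mathcal{H}(S+S)\subset A\cap B$ and $b_z+\mathcal{H}(S+S)\subset A\cap B$ are correct consequences of Lemma \ref{clau} (and the iteration necessarily stops there, since doubles lying outside $S\cup(z+S)$ are no longer exceptions), but the resulting lower bound $|A\cap B|\ge 2|S+S|\,L(G)$ against the upper bound $(|G|+L_S)/2$ only yields $|S+S|\le |G|/(4L(G))+|S|/4$, which is perfectly compatible with $|S+S|>|S|$ whenever $S$ is small relative to $|G|/L(G)$; no amount of ``careful numerical analysis'' extracts $|S+S|=|S|$ from these two estimates. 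Moreover, the disjointness of $\mathcal{H}(S+S)$ and $\mathcal{H}(z+S+S)$ requires $z\notin S+S+S+S$, whereas Proposition \ref{proof_lower_2s} only gives $z\notin S+S$; you invoke the stronger condition without proof. You flag both difficulties yourself, which is honest, but it means the decisive step is missing.

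The paper closes the case $S-S\subset K(G)$ not by counting but by a structural observation you did not exploit: the elements of $S'$ are themselves $A'\wedge^{S'}B'$-exceptions. Given $s_1',s_2'\in S'$, one writes $s_1'=2y$ with $y\in A'\cap B'$ (Corollary \ref{coroxxl}) and notes $y+s_1'+s_2'\in y+K(G)\subset A'\cap B'$ (Lemma \ref{clau}); then the representation $s_2'=(y+s_1'+s_2')+y$ in $A'+B'$ forces $s_1'+s_2'=(y+s_1'+s_2')-y\in S'$ because $s_2'$ is an exception. Hence $S'+S'\subset S'$, so $S'$ is a group, contradicting the assumption. Case (iii) is then reduced to this one by passing to $G/J$, which requires verifying that the quotient triple again satisfies $|\pi(A')|+|\pi(B')|=|G/J|+L_{\pi(S')}$ with $L_{\pi(S')}=|\pi(S')|\,L(G/J)$ --- a nontrivial computation resting on $L(G/J)=L(G)$ that your one-sentence reduction to ``case (ii) in $G/H_0$'' glosses over.
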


\begin{proof}
Let $z$ be an $A\wedge^S B$-exception, which exists since $|A\wedge^S B|= |G|-2|S|$.
By Lemma \ref{xxl}, for each $s\in S$ we may find $b \in (A-s)\cap B$ and define
the sets $A'=A-b-s$, $B'=B-b$ and $S'=S-s$. In particular, we have that
$0\in  A'\cap B'\cap S'$ and
$G\setminus (A'\wedge^{S'} B' )=G\setminus (A\wedge^S B )-z$.

Assume that we have proved that $S-s$ is
a subgroup $H$ of $G$. Then $H \subset S-S \subset 2 \cdot G$
by Proposition \ref{proof_lower_2s} (i). Statement (i) of the Theorem
follows. By Proposition \ref{proof_lower_2s} (iii),
we obtain that the set of $A\wedge^{S} B$-exceptions can be partitioned
into two translates of $S$, that is, two $H$-cosets and (ii) is proved.

What remains to be proved is that $S' = S-s$ is a group. Suppose the contrary.
By Proposition \ref{proof_lower_2s} (ii), we can apply Lemma \ref{S_general}.
Two cases may happen.
\smallskip

\noindent \underline{Case 1}. $S -S\subset K(G)$.

Let $s_1',s_2'\in S'$. As shown in the course of the proof of Proposition \ref{proof_lower_2s},
in the present situation: $S' \subset G\setminus (A'\wedge^{S'}B')$.
By Corollary \ref{coroxxl}, there exists
$y\in A'\cap B'$ such that $s_1'=2y$. Notice that since
we have that $S' \subset S-S \subset K(G)$ and $K(G)$
is a group, we have that $s'_1+s'_2 \in K(G)$.

Since $2y\in S'$ is an $A'\wedge^{S'}B'$-exception, and
$0 \in S'\cap -S'$, Lemma \ref{clau} implies
$y+K(G)=y +\mathcal{H}(\{0\}) \subset  A'\cap B'$. Therefore $y+s'_1+s'_2 \in A' \cap B'$.
Writing $s'_2$ as the sum  $s'_2 = (y+s_1'+s_2')+y$ in $A'+B'$,
we deduce from the fact that $s_2'$ is an $A'\wedge^{S'} B'$-exception
that $s_1'+s_2'= (y+s_1'+s_2')-y \in S'$. This proves $S'+S' \subset S'$.

This and the facts that $0\in S'$ and $S'$ is finite implies that
$S'$ is a subgroup of $G$, a contradiction.
\smallskip

\noindent\underline{Case 2}. We now suppose that there exists a group
$J$ such that $S=(s_1+J)\cup (s_2+J)\cup \ldots \cup (s_k+J)$
and for each pair $i,j\in \{1,\ldots, k\}$ we have $2(s_i-s_j)\in J$.

Notice that $J \subset S-s_1 \subset S-S \subset 2\cdot G$ which shows
$J\subset 2\cdot G$.

Let $\pi : G \rightarrow G/J$.
We first prove that
\begin{equation}
\label{quocient}
|\pi (A')|+|\pi (B')|=|G/J|+L_{\pi (S')},\quad
L_{\pi(S')}=|\pi (S')|\ L(G/J)\quad
\text{and} \quad \pi (S')\subset  K(G/J).
\end{equation}

Indeed, since by Proposition \ref{proof_lower_2s} (iii) the set
of $A\wedge^S B$-exceptions can be partitioned into two translates of
$S$ and since $S$ is a union of $J$-cosets,
the equality $|A\wedge^S B|= |G|-2|S|$ implies that
$$
|\pi (A')\wedge^{\pi (S')} \pi (B')|= |G/J|-2|\pi (S')|.
$$
Thus, by Lemma \ref{phS} and Lemma \ref{L_S} (iv), we obtain that:
\begin{equation}
\label{hdr}
|\pi(A')|+|\pi(B')|\le |G/J|+L_{\pi(S')}\le  |G/J|+|\pi(S')||K(G/J)|.
\end{equation}
Let us see now that $|K(G/ J )|=L(G)$. Since $J$ is a subgroup of $2 \cdot G$,
the set $Y=\{y\in G:\ 2y\in J\}$ itself is a group. Moreover, we have
that $Y+K(G)=Y$ and $Y+J=Y$ and we obtain that $|K(G/J )|=|Y/J |=L(G)$.
Hence, we should have $|\pi (A')|+|\pi(B')|=|G/J |+|\pi(S')||K(G/J )|$,
otherwise, using \eqref{hdr}, we derive the inequalities
$$
|G|+|S'|\ L(G)=|J|(|G/J |+|\pi(S')||K(G/J )|)>|J|(|\pi(A')|+|\pi(B')|)\ge
|A'|+|B'|,
$$
which gives us a contradiction with $|A|+|B|=|G|+|S|\ L(G)$.
Clearly, the inclusion $\pi (S')\subset  K(G/J)$ holds.
This proves \eqref{quocient}.

Notice that, from \eqref{quocient} we also conclude that
$\pi(S')-\pi(S')\subset K(G/J)$. Therefore,
by the same reasoning as in Case 1, applied to $\pi (A'), \pi (B')$
and $\pi(S')$, we obtain that $\pi (S')$ is a subgroup in $G/J$.
Which implies, since $S'$ is union of cosets modulo $J$,
that $S'$ is a subgroup of $G$, a contradiction.
\end{proof}

\begin{theorem}
Let $A,B$ and $S$ be non-empty subsets of a finite Abelian group $G$ with
$|A|+|B|=|G|+L_S$ and $L_S=|S|\ L(G)$.
Let $2b_s+s$ be an $A\wedge^S B$-exception, with $s\in S$,
$b_s\in (A-s)\cap B$. Then, the following assertions are equivalent
\begin{itemize}
\item[(i)] $|A\wedge^S B|= |G|-2|S|$.
\item[(ii)]
\begin{itemize}
\item[(a)] the set $\Sigma = S-s$ is a subgroup of $2 \cdot G$,
\item[(b)] if $\pi:G \rightarrow G/ \Sigma$ denotes the natural projection,
$A'=\pi (A-b_s-s)$, $B'=\pi (B-b_s)$,
$$
|A'|+|B'|=|G/ \Sigma |+ L (G/ \Sigma ) \text{ and }
|A'\wedge B'|= |G/ \Sigma |-2.
$$
\end{itemize}
\end{itemize}
\end{theorem}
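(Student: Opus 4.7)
The plan is to reduce the critical case in $(G,A,B,S)$ to the classical (Guo) critical case in the quotient group $G/\Sigma$. Throughout, the key translated objects are $\tilde A = A - b_s - s$, $\tilde B = B - b_s$ and $\Sigma = S-s$, for which Lemma \ref{xxl} guarantees $0 \in \tilde A \cap \tilde B \cap \Sigma$, $0$ is an $\tilde A \wedge^\Sigma \tilde B$-exception, $L_\Sigma = L_S = |\Sigma|\, L(G)$, and $|A \wedge^S B| = |\tilde A \wedge^\Sigma \tilde B|$. I will prove the two implications separately, writing $\pi:G\to G/\Sigma$ for the canonical projection.

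For $(i)\Rightarrow(ii)$, part $(a)$ is immediate from Theorem \ref{exceptions}$(i)$: $S$ is a coset modulo some subgroup $H\le 2\cdot G$, and since $s\in S$ this forces $\Sigma = S-s = H$. For $(b)$, Proposition \ref{proof_lower_2s}$(iii)$ says the set of $A\wedge^S B$-exceptions is a union of two $S$-translates, hence two $\Sigma$-cosets, so $\tilde A \wedge^\Sigma \tilde B$ is itself a union of $\Sigma$-cosets. A direct check using that $\pi(\Sigma)=\{0\}$ shows that a representation $z=a+b$ with $a\in\tilde A$, $b\in\tilde B$, $a-b\notin\Sigma$ projects to one with $\pi(a)\neq\pi(b)$ and conversely (up to $\Sigma$-shift), so in fact $\tilde A \wedge^\Sigma \tilde B = \pi^{-1}(A'\wedge B')$, yielding $|A'\wedge B'|=|G/\Sigma|-2$. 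The cardinality identity $|A'|+|B'|=|G/\Sigma|+L(G/\Sigma)$ then follows by sandwiching: the computation $L(G/\Sigma)=L(G)$ (already carried out inside the proof of Theorem \ref{exceptions}, using $\Sigma\subset 2\cdot G$) combined with $|A|+|B|=|G|+|\Sigma|L(G)\le (|A'|+|B'|)|\Sigma|$ gives the lower bound, while applying Lemma \ref{phS} contrapositively in $G/\Sigma$ (since $A'\wedge B'\neq G/\Sigma$) gives the upper bound.

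For $(ii)\Rightarrow(i)$, Theorem \ref{lower_2s} applied to $\tilde A,\tilde B,\Sigma$ (whose hypotheses hold by the translation properties and by $(a)$) produces $|\tilde A \wedge^\Sigma \tilde B| \ge |G|-2|\Sigma|$. For the matching upper bound, the inclusion $\tilde A \wedge^\Sigma \tilde B \subset \pi^{-1}(A'\wedge B')$ is immediate from the definitions -- a representation $z=a+b$ with $a-b\notin\Sigma$ projects to $\pi(z)=\pi(a)+\pi(b)$ with $\pi(a)\neq\pi(b)$ -- and therefore $|\tilde A \wedge^\Sigma \tilde B|\le |A'\wedge B'||\Sigma|=(|G/\Sigma|-2)|\Sigma|=|G|-2|\Sigma|$. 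Equality then gives $|A\wedge^S B|=|G|-2|S|$.

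The principal delicate point sits in the forward direction, namely the equality (not just inclusion) $\tilde A \wedge^\Sigma \tilde B = \pi^{-1}(A'\wedge B')$: this is what forces the exceptions to descend cleanly to the quotient, and it relies on Proposition \ref{proof_lower_2s}$(iii)$ together with $(a)$ to guarantee that the exception set is genuinely a union of full $\Sigma$-cosets. In the converse direction only the easy inclusion is needed, so once the structural information about $\Sigma$ is available, the proof is essentially bookkeeping combined with Theorem \ref{lower_2s}.
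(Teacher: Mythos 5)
Your proof is correct and follows essentially the same route as the paper: both directions reduce to the quotient $G/\Sigma$ via Theorem \ref{exceptions} and Proposition \ref{proof_lower_2s}, and the cardinality identity $|A'|+|B'|=|G/\Sigma|+L(G/\Sigma)$ is obtained by the same sandwich between Lemma \ref{phS} and the counting argument based on $L(G/\Sigma)=L(G)$. The only (harmless) divergence is in the converse direction, where you combine the easy inclusion $(A-b_s-s)\wedge^{\Sigma}(B-b_s)\subset\pi^{-1}(A'\wedge B')$ with the lower bound of Theorem \ref{lower_2s}, whereas the paper instead first shows that $A$ and $B$ are unions of full $\Sigma$-cosets so that the two exceptional classes lift exactly; both arguments are sound.
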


\begin{proof}
Suppose first that the equality $|A\wedge^S B|= |G|-2|S|$ holds.
By Theorem \ref{exceptions}, $\Sigma$ is a subgroup of $2 \cdot G$ and
the set of $A\wedge^S B$-exceptions is a union of two cosets modulo $\Sigma$.
Hence, since $A\wedge^S B-(2b_s+s)=(A-b_s -s)\wedge^{\Sigma}(B-b_s)$,
we have that $|A'\wedge B'|= |G/\Sigma|-2$.
This implies, by Lemma \ref{phS}, in case $S=\{0\}$,
that $|A'|+|B'|\le|G/ \Sigma |+ L(G/\Sigma )$.

Let us see now that $L(G/ \Sigma )=L(G)$.
Since $\Sigma$ is a subgroup of $2 \cdot G$, the set
$Y=\{y\in G:\ 2y\in \Sigma\}$ itself is a group.
Moreover, we have that $Y+K(G)=Y$ and $Y+\Sigma=Y$. Thus
we obtain that $L (G/\Sigma )=|Y/\Sigma |=L(G)$.
Hence, we should have $|A'|+|B'|=|G/\Sigma |+ L(G/\Sigma )$,
otherwise
$$
|G|+|S|\ L(G)=|S|(|G/\Sigma |+ L (G/\Sigma ))>|S|(|A'|+|B'|)\ge
|A|+|B|,
$$
a contradiction with $|A|+|B|=|G|+|S|\ L(G)$.
Note that, we also conclude that $A$ and $B$ are unions
of cosets modulo $\Sigma$.

For the converse statement, assume that $\Sigma$ is a subgroup of $2 \cdot G$
and that the equalities $|A'|+|B'|=|G/\Sigma |+L(G/\Sigma )$ and
$|A'\wedge B'|= |G/\Sigma|-2$ hold. The condition $|A|+|B|=|G|+L_S$,
with $L_S=|S|\ L(G)$ implies that $A$ and $B$ are unions of cosets
modulo $\Sigma$, since, as it is easy to check,
$L(G/\Sigma)= L(G)$. Hence, the set of $A'\wedge B'$-exceptions is
a union of two cosets modulo $\Sigma$. Therefore, we have
$|A\wedge^S B|= |G|-2|S|$.
\end{proof}

From the previous result together with Theorem \ref{Guo_theo_charac}
(in case $a=0$) we obtain the characterization of the critical sets
$A,B$ and $S$ of Theorem \ref{lower_2s}.

\begin{theorem}
\label{characterization_even}
Let $A,B$ and $S$ be non-empty subsets of a finite Abelian group $G$ with
$|A|+|B|=|G|+L_S$ and $L_S=|S|\ L(G)$. Let $2b_s+s$ be an
$A\wedge^S B$-exception with $s\in S$ and $b_s\in (A-s)\cap B$.
Then, the equality $|A\wedge^S B|= |G|-2|S|$ holds,
if and only if the following conditions hold true:

\begin{itemize}
\item[(i)]  $\Sigma = S-s$ is a subgroup.
\end{itemize}
\noindent Let $\pi: G\rightarrow G/\Sigma$ be the natural projection and
let $A'=\pi (A-b_s-s)$ and $B'=\pi (B-b_s)$.
\begin{itemize}
\item[(ii)] There exists $b\in A'\cap B'$ such that $H=\subgp{2b}$
is a subgroup in $G/\Sigma$ of odd order $d$ greater than $1$.
\item[(iii)] There exists
$\{x_1,\ldots , x_k,x_{k+1},\ldots ,x_l,x_{l+1},\ldots,x_m\}\subset
(G/\Sigma) \setminus (K(G/\Sigma )+H)$, where
$m=(|G/\Sigma |/d- L(G/\Sigma ))/2$ and $0\le k,l\le m$, such that
\begin{eqnarray*}
&& (G/\Sigma) \setminus (K (G/\Sigma )+H)=\
\bigcup_{i=1}^m((x_i+H)\cup (-x_i+H)),\\
&& A'=\{0,b,3b,\ldots, (d-2)b\}+K(G/\Sigma )\cup
(\{x_1,\ldots , x_k,\pm x_{k+1},\ldots ,\pm x_l\}+H),\\
&& B'=\{0,b,3b,\ldots, (d-2)b\}+K(G/\Sigma ) \cup
(\{x_1,\ldots , x_k,\pm x_{l+1},\ldots ,\pm x_m\}+H),
\end{eqnarray*}
\item[(iv)] And $A=b_s+s+\pi^{-1}(A')$ and $B=b_s+\pi^{-1}(B')$.
\end{itemize}
\end{theorem}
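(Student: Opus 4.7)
The plan is to reduce the theorem, via the immediately preceding theorem, to a direct application of Theorem \ref{Guo_theo_charac} in the quotient group $G/\Sigma$. The preceding theorem has already done the hard analytic work: it establishes the equivalence between $|A\wedge^S B| = |G|-2|S|$ and the pair of conditions that $\Sigma = S-s$ is a subgroup of $2\cdot G$ and that the projected sets $A' = \pi(A-b_s-s)$, $B' = \pi(B-b_s)$ satisfy the critical equalities $|A'|+|B'| = |G/\Sigma|+L(G/\Sigma)$ and $|A'\wedge B'| = |G/\Sigma|-2$ for classical restricted sumsets. What remains is purely a translation between this quotient formulation and the explicit structural description given by Guo's theorem.

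For the forward direction, I would start by invoking the preceding theorem to obtain (i) and the two equalities above for $A'$ and $B'$. Since $b_s \in (A-s)\cap B$, one has $0 = \pi(b_s-b_s) \in B'$ and likewise $0 \in A'$, so $0 \in A' \cap B'$. I would then apply Theorem \ref{Guo_theo_charac} to $(A',B')$ in the ambient group $G/\Sigma$, using $0$ as the element playing the role of $a$ in Guo's statement and letting $b$ be the second distinguished element of $A'\cap B'$ produced by that theorem. The group $K(G/\Sigma)$ plays the role of $G_0$, and one checks (as is noted in the proof of the preceding theorem) that $L(G/\Sigma) = L(G)$, so the parameter $m = (|G/\Sigma|/d - L(G/\Sigma))/2$ in (iii) matches Guo's. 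Conditions (ii) and (iii) then drop out directly. For (iv), one uses the fact -- also established in the preceding theorem -- that both $A$ and $B$ are unions of $\Sigma$-cosets, which gives $A = b_s + s + \pi^{-1}(A')$ and $B = b_s + \pi^{-1}(B')$.

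For the converse direction, I would run the argument in reverse: (i) makes the projection $\pi : G \to G/\Sigma$ meaningful, and (ii)--(iii) are precisely the structural conditions that Theorem \ref{Guo_theo_charac} (applied with $a=0$) identifies as equivalent to $|A'|+|B'| = |G/\Sigma|+L(G/\Sigma)$ together with $|A' \wedge B'| = |G/\Sigma|-2$. Condition (iv) lifts $A'$ and $B'$ to $\Sigma$-saturated sets $A$ and $B$. Then the preceding theorem converts the critical equality in $G/\Sigma$ back into $|A\wedge^S B| = |G|-2|S|$.

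The only genuine obstacle is bookkeeping: one must verify that the dictionary $a \leftrightarrow 0$, $G_0 \leftrightarrow K(G/\Sigma)$, $L(G) \leftrightarrow L(G/\Sigma)$ between Guo's original statement and the projected setting is compatible, and in particular that choosing $a=0$ in Theorem \ref{Guo_theo_charac} is legitimate -- which it is, since $0 \in A'\cap B'$ and the characterization is translation-invariant. Once this matching is in place, both directions follow with no further calculation.
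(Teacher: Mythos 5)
Your proposal follows exactly the paper's route: the paper derives Theorem \ref{characterization_even} in a single sentence by combining the immediately preceding (unnumbered) equivalence theorem with Theorem \ref{Guo_theo_charac} applied in $G/\Sigma$ with $a=0$, which is precisely your reduction, including the use of the $\Sigma$-saturation of $A$ and $B$ for (iv) and the identification $L(G/\Sigma)=L(G)$. Your version is simply a more explicit write-up of the same argument, with the same (minor, glossed-over) bookkeeping point about normalizing Guo's distinguished element to $0$.
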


Note that, under the assumptions of the previous theorem,
we can determine the set of $A\wedge^SB$-exceptions.
Let $Y=\{y\in G:\ 2y\in \Sigma \}$ then
$\pi^{-1}(K(G/\Sigma ))=Y$ and the set
$G\setminus (A\wedge^SB)=(2b_s+S)\cup (2b^*+2b_s+S)$,
where $b^*\in G$ and $\pi(b^*)=b$.

\section{The critical sets for Abelian groups. Case $L_S<|S|\ L(G)$.}

As an immediate consequence of Theorem \ref{general},
we obtain the following result.

\begin{theorem}\label{tretze}
Let $A,B$ and $S$ be non-empty subsets of a finite Abelian group $G$ with
$|A|+|B|=|G|+L_S$. If the equality $|A\wedge^S B|= |G|-2|S|$ holds
then there exists a decomposition
$S=S_1\cup S_2\cup\ldots \cup S_k$ of $S$ modulo $2 \cdot G$ such that
\begin{enumerate}
\item[(i)] $|S_i|=L_S/L(G)$ for each $i\in \{1,2,\ldots, k\}$,
\item[(ii)] $|A\wedge^{S_i}B|=|G|-2|S_i|$ and
$|A\wedge^{(S\setminus S_i)}B|=|G|-2|S\setminus S_i|$ for each
$i\in \{1,2,\ldots, k\}$. Moreover, the set of $A\wedge^SB$-exceptions
can be partitioned in the form:
$$
G\setminus (A\wedge^SB)=\cup_{i=1}^k(G\setminus (A\wedge^{S_i}B)).
$$
\end{enumerate}
\end{theorem}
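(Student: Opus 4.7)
The plan is to let $S_1,\ldots,S_k$ be the non-empty intersections of $S$ with the distinct cosets of $2\cdot G$ that $S$ meets; this is the ``decomposition modulo $2\cdot G$'' the statement asks for. Because each $S_i$ lies in a single coset of $2\cdot G$, Lemma~\ref{L_S}~(i) gives $L_{S_i}=|S_i|\,L(G)$, while the same lemma applied to $S$ itself reads $L_S=L(G)\cdot\max_i|S_i|$. Thus $|S_i|\le L_S/L(G)$ for every $i$, with equality reached for at least one index; claim (i) will amount to showing that equality is forced for every $i$.

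The key structural remark---implicit already in the proof of Lemma~\ref{texnik}~(iii)---is that all representations of any single $A\wedge^S B$-exception $z$ live over one coset of $2\cdot G$. Indeed, if $z=a+b=a'+b'$ with $a,a'\in A$, $b,b'\in B$ and $a-b,a'-b'\in S$, then
\[
(a-b)-(a'-b')=2(b'-b)\in 2\cdot G,
\]
so both differences lie in the same $S_i$. Consequently every $A\wedge^S B$-exception is an $A\wedge^{S_i}B$-exception for \emph{exactly one} index $i$, while the converse inclusion is automatic since $S_i\subset S$. Applying the same reasoning to $S\setminus S_i$ I would obtain the two disjoint-union decompositions
\[
G\setminus(A\wedge^S B)=\bigcup_{i=1}^{k}\bigl(G\setminus(A\wedge^{S_i}B)\bigr)\quad\text{and}\quad G\setminus(A\wedge^{(S\setminus S_i)}B)=\bigcup_{j\neq i}\bigl(G\setminus(A\wedge^{S_j}B)\bigr).
\]

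The remainder is bookkeeping. For indices with $|S_i|<L_S/L(G)$ one has $L_{S_i}<L_S$, so $|A|+|B|>|G|+L_{S_i}$ and Lemma~\ref{phS} applied to $(A,B,S_i)$ gives $A\wedge^{S_i}B=G$; for the remaining indices $|A|+|B|=|G|+L_{S_i}$ and Theorem~\ref{general} yields $|G\setminus(A\wedge^{S_i}B)|\le 2|S_i|$. Substituting into the first displayed identity,
\[
2|S|=|G\setminus(A\wedge^S B)|\le\sum_{i\,:\,|S_i|=L_S/L(G)}2|S_i|\le 2|S|,
\]
so equality must hold throughout. This forces $|S_i|=L_S/L(G)$ for every $i$ (which is (i)) and $|G\setminus(A\wedge^{S_i}B)|=2|S_i|$ for every $i$; the companion equality $|A\wedge^{(S\setminus S_i)}B|=|G|-2|S\setminus S_i|$ then drops out of the second displayed identity by summing over $j\neq i$. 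The only mildly delicate point is the disjointness of the first union, which is precisely where the $2(b'-b)\in 2\cdot G$ observation above is used; once that is in hand, the theorem is a direct consequence of Theorem~\ref{general} and Lemma~\ref{phS}.
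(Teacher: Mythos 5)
Your proof is correct, and it uses the same ingredients as the paper's (the observation that two representations $a+b=a'+b'$ of an exception satisfy $(a-b)-(a'-b')=2(b'-b)\in 2\cdot G$, which is exactly the content of Lemma \ref{texnik}~(iii), together with Theorem \ref{general} and Lemma \ref{phS}), but you organize them differently. The paper proceeds by induction on $|S|$: it translates so that $0\in A'\cap B'\cap S'$ is an exception, peels off the single class $\Sigma=S'\cap 2\cdot G$, proves $L_{S'\setminus\Sigma}=L_{S'}$ via an auxiliary exception (equation \eqref{lesl}), and recurses on $S'\setminus\Sigma$. You instead split $S$ into all of its classes modulo $2\cdot G$ at once and run a single global count: classes with $|S_i|<L_S/L(G)$ have $L_{S_i}<L_S$ and hence contribute no exceptions by Lemma \ref{phS}, classes with $|S_i|=L_S/L(G)$ contribute at most $2|S_i|$ by Theorem \ref{general}, and the hypothesis $|G\setminus(A\wedge^S B)|=2|S|$ squeezes everything to equality, yielding (i) and both equalities in (ii) simultaneously. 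This one-pass version avoids the translation and the induction entirely and makes the necessity of $|S_i|=L_S/L(G)$ for \emph{every} class (not just the ones reached by the recursion) completely transparent; the paper's inductive formulation buys nothing extra here, so your route is arguably the cleaner write-up of the same idea.
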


\begin{proof}
Let $L_S=mL(G)$, for some $m\le |S|$. We will prove the result by induction on $|S|$.
We have $A+B=G$. Let $z$ be an $A\wedge^S B$-exception.
We write $z=2b+s$ for some $s\in S$ and
$b\in (A-s)\cap B$ and define $A'=A-b-s$, $B'=B-b$ and $S'=S-s$.
Again, $0\in A'\cap B'\cap S'$, $G\setminus (A\wedge^SB)-(2b+s)=G\setminus(A'\wedge^{S'}B')$ (in particular, $0$ is an $A'\wedge^{S'}
B'$-exception), $L_S=L_{S'}(0)=L_{S'}$ and $A' + B' =G$.

We denote $\Sigma = S' \cap 2 \cdot G$. Notice that $0\in \Sigma$,
and also by Lemma \ref{texnik} (i) that $m=|\Sigma|$.
If $|S'|=m$ then the result holds with $k=1$. Suppose now that $|S'|>m$.

By Lemma \ref{texnik} (iii), we obtain that the set of $A'\wedge^{S'}B'$-exceptions can be partitioned as follows
\begin{eqnarray}\label{final}
G\setminus (A'\wedge^{S'} B') & = &  (G\setminus (A'\wedge^{\Sigma} B'))\cup
(G\setminus (A'\wedge^{(S'\setminus \Sigma)} B'))
\end{eqnarray}

Now, using Lemma \ref{texnik} (ii), we have that $|G\setminus (A'\wedge^{\Sigma} B')|\le 2|\Sigma|$. In particular, since $|S'|>|\Sigma|$, there exists and exception $z'\in G\setminus (A'\wedge^{(S'\setminus \Sigma)} B')$. Thus, \eqref{lesl} is still valid
and we have $L_{S' \setminus \Sigma} = L_S$, hence
$|A'|+|B'|=|G|+L_{S' \setminus \Sigma}$.  By Theorem \ref{general}, we obtain that $|G\setminus (A'\wedge^{(S'\setminus \Sigma)} B')|\le 2|S'\setminus \Sigma|$. Therefore, using $|G\setminus (A\wedge^S B)|=2|S|$ and the partition (\ref{final}), we conclude that $|G\setminus (A'\wedge^{\Sigma} B')|= 2|\Sigma|$ and $|G\setminus (A'\wedge^{(S'\setminus \Sigma)} B')|=2|S'\setminus \Sigma|$.

The inductive process applied to $S'\setminus \Sigma$ completes the result.

\end{proof}

We now introduce some results that allow us to be more precise about
the structure of the sets $S$ and $G\setminus (A\wedge^SB)$ in the case
$L_S<|S|\ L(G)$, provided some restriction holds.

\begin{proposition}
\label{catorze}
Let $A$ and $B$ be subsets of a finite Abelian group $G$.
Suppose that there exists an element $b\in A\cap B$ such that:
\begin{itemize}
\item[(i)] The order $d$ of the subgroup $H=\subgp{2b}$ is
an odd integer greater than $2 L(G)-1$.
\item[(ii)] There exists a subset
$\{x_1,\ldots ,x_k,x_{k+1},\ldots, x_l,x_{l+1},\ldots, x_m\}
\subset G \setminus (K(G)+H)$, where
$m=(|G|/d-|K(G)|)/2$ and $0\le k,l\le m$, such that
\begin{eqnarray*}
&& G\setminus (K(G)+H)=\bigcup_{i=1}^m((x_i+H)\cup (-x_i+H)),\\
&& A=(\{0,b,3b,\ldots, (d-2)b\}+K(G))\cup
(\{x_1,\ldots,x_k,\pm x_{k+1},\ldots, \pm x_l\}+H), \\
&& B=(\{0,b,3b,\ldots, (d-2)b)\}+K(G))\cup
(\{x_1,\ldots,x_k,\pm x_{l+1},\ldots, \pm x_m\}+H).
\end{eqnarray*}
\end{itemize}
Then, for each $z\in G\setminus (K(G)\cup (2b+K(G)))$ we have
$\nu (z)>L(G)$. In particular, if $\nu (z)=L(G)$ then
$z\in K(G)\cup (2b+K(G)).$
\end{proposition}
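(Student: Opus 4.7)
The approach I plan is to use the equivalent characterisation $\nu(z) = L(G) \iff z \notin (G \setminus A) + (G \setminus B)$, which follows once $|A|+|B| = |G|+L(G)$ is established via the pigeonhole argument of Lemma \ref{phS}. To verify this cardinality, I would note that $d$ odd and $K(G) \cap H = \{0\}$ imply that $P = \{0, b, 3b, \ldots, (d-2)b\}$ injects into $G/K(G)$, so $|A_1| = |B_1| = \frac{d+1}{2} L(G)$; coupled with $|A_2| + |B_2| = 2md$ and $|G| = (L(G) + 2m) d$ this yields $|A| + |B| = |G| + L(G)$. It therefore suffices to show that $(G \setminus A) + (G \setminus B)$ contains $G \setminus (K(G) \cup (2b + K(G)))$.

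The complements decompose as $G \setminus A = (Q + K(G)) \sqcup (R_A + H)$ and $G \setminus B = (Q + K(G)) \sqcup (R_B + H)$, where $Q = \{2b, 4b, \ldots, (d-1)b\}$ is the complement of $P$ inside $H$, and $R_A = \{-x_i : 1 \le i \le k\} \cup \{\pm x_i : l < i \le m\}$, $R_B = \{-x_i : 1 \le i \le k\} \cup \{\pm x_i : k < i \le l\}$. The central computation is the identity
\[
(Q + K(G)) + (Q + K(G)) = (K(G) + H) \setminus \bigl(K(G) \cup (2b + K(G))\bigr),
\]
which I would establish by identifying $H$ with $\mathbb{Z}/d\mathbb{Z}$ via $2b \leftrightarrow 1$: then $Q$ corresponds to $\{1, 2, \ldots, (d-1)/2\}$ and $Q + Q$ to $\{2, 3, \ldots, d-1\} = \mathbb{Z}/d\mathbb{Z} \setminus \{0, 1\}$, that is $H \setminus \{0, 2b\}$. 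This covers all $z \in K(G) + H$ outside $K(G) \cup (2b + K(G))$ at once; the hypothesis $d > 2L(G) - 1$ (equivalent to $|Q| \ge L(G)$) will be what ensures $Q$ is large enough for the remaining analysis.

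For $z \notin K(G) + H$, I would additionally use the three further contributions $(Q + K(G)) + (R_B + H) = R_B + K(G) + H$, $(R_A + H) + (Q + K(G)) = R_A + K(G) + H$, and $(R_A + H) + (R_B + H) = (R_A + R_B) + H$. Passing to the quotient $\bar G = G / (K(G) + H)$, the first two cover every $\bar z \in \bar R_A \cup \bar R_B$; the main technical obstacle will be covering the cosets $+\bar x_i$ for $i \in [1, k]$, which need not lie in $\bar R_A \cup \bar R_B$. My plan is to write $+\bar x_i = (-\bar x_i) + 2\bar x_i$ and observe that $2\bar x_i \in \bar G$ either vanishes (in which case $\bar x_i \in K(\bar G)$ and $+\bar x_i = -\bar x_i \in \bar R_B$ directly) or equals $\pm \bar x_j$ for some $j \in [1, m]$; a short case analysis on the position of $j$ relative to $k$ then completes the argument.
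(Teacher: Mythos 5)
Your opening reduction is correct and is a genuinely different route from the paper's: since $|A|+|B|=|G|+L(G)$ (your cardinality count is right), one has $\nu(z)>L(G)$ exactly when $z\in(G\setminus A)+(G\setminus B)$, whereas the paper bounds $\nu(z)$ from below directly on each of the four pieces $P+P$, $P+Q$, $P+R$, $Q+R$ of $A+B$ (with $Q,R$ the paper's cosets outside $K(G)+H$). Your description of the complements and the identity $(Q+K(G))+(Q+K(G))=(K(G)+H)\setminus(K(G)\cup(2b+K(G)))$ (with your $Q$, the complement of $P$ inside $H$) are also correct, and together with $(Q+K(G))+(R_B+H)=R_B+K(G)+H$ and $(R_A+H)+(Q+K(G))=R_A+K(G)+H$ they dispose of every $z\in K(G)+H$ and of every $H$-coset whose class modulo $K(G)+H$ lies in $\bar R_A\cup\bar R_B$.

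The gap is in the remaining case: an $H$-coset $x_i+H$ ($i\le k$) such that every $H$-coset of $x_i+K(G)+H$ is again some $x_j+H$ with $j\le k$. First, the only piece of $(G\setminus A)+(G\setminus B)$ that can reach such a coset is $(R_A+H)+(R_B+H)=(R_A+R_B)+H$, which is a union of $H$-cosets but not of $(K(G)+H)$-cosets; so establishing $\bar x_i\in\bar R_A+\bar R_B$ in $G/(K(G)+H)$, as you propose, is not enough --- the identity must be produced in $G/H$. Second, and more seriously, the decomposition $x_i+H=(-x_i+H)+(2x_i+H)$ fails precisely when $2x_i+H$ is itself a coset $x_j+H\subseteq A\cap B$ with $j\le k$, and iterating the doubling need never escape this situation. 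Concretely, take $G=\Z/2\Z\oplus\Z/7\Z\oplus\Z/5\Z$ and $b=(0,0,1)$, so $d=5$, $L(G)=2$, $K(G)+H=\Z/2\Z\oplus\{0\}\oplus\Z/5\Z$, $m=6$; choose $k=l=m=6$ with the $x_j$ being the six $H$-cosets whose $\Z/7\Z$-coordinate lies in $\{1,2,4\}$. Doubling permutes $\{1,2,4\}$ cyclically, so your recursion loops forever. The conclusion of the Proposition still holds there (in $\Z/7\Z$ one has $1=3+5$, $2=3+6$, $4=5+6$, exhibiting the required elements of $R_A+R_B$), but not through your decomposition, so covering these cosets in general needs a different idea. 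The paper's treatment of exactly these $z$ is a one-line direct count: all $H$-cosets of $z+K(G)+H$ lie in $B$, hence $z-P\subseteq B$ and $\nu(z)\ge|P|=\frac{d+1}{2}L(G)>L(G)$. Note finally that your outline never actually invokes the hypothesis $d>2L(G)-1$, which is where the paper needs it (for the bounds $\nu(z)\ge(d+1)/2$ on $P+Q$ and $P+R$); a complete version of your argument should make explicit where, if anywhere, that hypothesis enters.
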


\begin{proof}
Let $P,Q$ and $R$ be the subsets defined as follows:
\begin{eqnarray*}
P &=& \{0,b,3b,\ldots, (d-2)b\}+K(G), \\
  Q &=& \{x_1,\ldots,x_k,\pm x_{k+1},\ldots, \pm x_l\}+H, \\
  R &=& \{x_1,\ldots,x_k,\pm x_{l+1},\ldots, \pm x_m\}+H.
\end{eqnarray*}

Then $A=P\cup Q$, $B=P\cup R$ and
$A+B=(P+P)\cup (P+R)\cup (P+Q)\cup (Q+R)$. Notice that,
since $|H|=d$ is odd, we can prove that $|H+K(G)|=d L(G)$
and thus $|A|+|B|=|G|+L(G)$ (see \cite{Guo09}). In particular, $A+B=G$.
Let us see now how $\nu (z)$ is lower bounded according
to which part of the sum it belongs to.

For the sum $P+P$, clearly, $P+P\subset K(G)+H$ and since $d$
is odd, we have that  $db+K(G)=K(G)$. Thus, by a direct computation,
it follows that $\nu (z)>L(G)$ unless $z\in K(G)\cup (2b+K(G))$.

Concerning the sum $ P+R$, we have that $P+R\subset G\setminus (K(G)+H)$.
Since $\{0,b,3b,\ldots, (d-2)b\}\subset H$, it is clear that $P+R$
is a union of $(H+K(G))$-cosets. Moreover, for each $z\in P+R$
 we have $\nu (z)\ge (d+1)/2$. Hence, since we are assuming that
$(d+1)/2>L(G)$, we conclude that $\nu (z)> L(G)$, for each $z\in P+R$.

The case of the sum $P+Q$ is similar to the previous one.
(Here we have that $P+Q\subset G\setminus (K(G)+H)$).

Finally, it follows that $Q+R$ is a union of $H$ cosets, none of which
is $H$, hence $0, 2b\notin Q+R$. Moreover, for each $z\in Q+R$,
$\nu (z)\ge |H|> L(G)$.
\end{proof}

With the notation and assumptions of the preceding proposition,
we can state the following two corollaries:

\begin{corollary}
\label{quinze}
Let $z\in G$. If $\nu (z)=L(G)$ then
$z\in (K(G)\cup (2b+K(G)))\setminus (Q+R).$
\end{corollary}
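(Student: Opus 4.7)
The plan is to deduce the corollary directly from what was established inside the proof of Proposition \ref{catorze}, simply by reading out two separate facts proved there and combining them.

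First, I would invoke Proposition \ref{catorze} itself, which already states that if $\nu(z)=L(G)$ then $z\in K(G)\cup (2b+K(G))$. So it only remains to show the additional exclusion $z\notin Q+R$. For this I would inspect the last paragraph of the proof of Proposition \ref{catorze}: there it is shown that $Q+R$ is a union of $H$-cosets none of which equals $H$ itself, and that moreover each $z\in Q+R$ satisfies $\nu(z)\ge |H|=d$. Combined with the hypothesis $d>2L(G)-1$ (which forces $d>L(G)$ as $L(G)\ge 1$), this gives $\nu(z)>L(G)$ for every $z\in Q+R$. Contrapositively, if $\nu(z)=L(G)$ then $z\notin Q+R$.

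Putting the two inclusions together yields
\[
z\in (K(G)\cup (2b+K(G)))\setminus (Q+R),
\]
which is exactly the claim. The only subtle point — and the one I would make explicit — is that the lower bound $\nu(z)\ge |H|$ on $Q+R$ uses only the structural decomposition of $Q$ and $R$ into $H$-cosets (so that each pair of cosets, one from $Q$ and one from $R$, contributes $|H|$ representations to a single $H$-coset of $Q+R$), and is independent of the finer computations carried out elsewhere in the proof of Proposition \ref{catorze}. Thus no further work beyond quoting the proposition and its proof is required.
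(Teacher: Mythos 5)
Your proposal is correct and matches the paper's (implicit) argument exactly: the paper states Corollary \ref{quinze} without proof as an immediate consequence of Proposition \ref{catorze} together with the observation, made in the last paragraph of that proposition's proof, that every $z\in Q+R$ satisfies $\nu(z)\ge |H|=d>L(G)$. Your explicit justification of the bound $\nu(z)\ge|H|$ via the $H$-coset structure of $Q$ and $R$, and of $d>L(G)$ from $d>2L(G)-1$, is exactly the intended reasoning.
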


\begin{corollary}
\label{setze}
Let $A,B$ be non-empty subsets of a finite Abelian group $G$ and $T$
be a subgroup of $2 \cdot G$ such that $|A|+|B|=|G|+|T|\ L(G)$.
Let $\pi: G\rightarrow G/T$ be the natural projection.
Denote $A'=\pi (A)$ and $B'=\pi (B)$.
Suppose that
\begin{itemize}
\item[(i)] there exists $b\in A'\cap B'$ such that $H=\subgp{2b}$
is a subgroup in $G/T$ of odd order $d$ greater than $2L(G/T)-1$,
\item[(ii)] there exists
$\{x_1,\ldots , x_k,x_{k+1},\ldots ,x_l,x_{l+1},\ldots,x_m\}
\subset (G/T)\setminus (K(G/T)+H)$, where
$m=(|G/T|/d- L(G/T))/2$ and $0\le k,l\le m$,
such that
\begin{eqnarray*}
&& (G/T) \setminus (K(G/T)+H)=
\bigcup_{i=1}^m((x_i+H)\cup (-x_i+H)),\\
&& A'=\{0,b,3b,\ldots, (d-2)b\}+K (G/T) \cup
(\{x_1,\ldots , x_k,\pm x_{k+1},\ldots ,\pm x_l\}+H),\\
&& B'=\{0,b,3b,\ldots, (d-2)b\}+K (G/T) \cup
(\{x_1,\ldots , x_k,\pm x_{l+1},\ldots ,\pm x_m\}+H).
\end{eqnarray*}
\end{itemize}
Then, for each
$z\in G\setminus \pi^{-1}(K(G/T)\cup (2b+K(G/T)))$
we have $\nu (z)>L(G)\ |T|$.
In particular, if $\nu (z)=L(G)\ |T|$ then
$z\in \pi^{-1}(K (G/T)\cup (2b+K(G/T))).$
\end{corollary}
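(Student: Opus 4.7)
My plan is to reduce the statement to Proposition \ref{catorze} applied inside the quotient $G/T$. The key observation will be that, under the hypotheses, $A$ and $B$ are in fact unions of $T$-cosets, i.e.\ $A=\pi^{-1}(A')$ and $B=\pi^{-1}(B')$; once this is in place, the number of representations factors as $\nu_{A,B}(z)=|T|\,\nu_{A',B'}(\pi(z))$ and the conclusion is immediate from the corresponding bound in $G/T$.

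First, I would observe that hypotheses (i) and (ii) on $A'$ and $B'$ in $G/T$ match exactly the structural description of a Guo-critical pair in $G/T$ (centred at $a=0$), so the ``if'' direction of Theorem \ref{Guo_theo_charac}, applied in $G/T$, yields
$$|A'|+|B'|=|G/T|+L(G/T)\quad\text{and}\quad |A'\wedge B'|=|G/T|-2.$$
Next I would verify the identity $L(G/T)=L(G)$, which holds because $T\subset 2\cdot G$: the subgroup $Y=\{g\in G:\ 2g\in T\}$ surjects onto $T$ under doubling with kernel $K(G)$, so $|Y|=|T|\,L(G)$, and hence $|K(G/T)|=|Y/T|=L(G)$. (This is the same computation already used in the proofs of Theorem \ref{exceptions} and the subsequent theorem.)

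Combining these facts with the cardinality assumption $|A|+|B|=|G|+|T|\,L(G)$ and the trivial inclusions $A\subset\pi^{-1}(A')$, $B\subset\pi^{-1}(B')$, I would obtain the chain
$$|G|+|T|\,L(G)=|A|+|B|\le |T|\bigl(|A'|+|B'|\bigr)=|T|\bigl(|G/T|+L(G/T)\bigr)=|G|+|T|\,L(G),$$
which forces equality throughout; in particular $A=\pi^{-1}(A')$ and $B=\pi^{-1}(B')$. Thus $A$ and $B$ are $T$-saturated, and each representation $\pi(z)=a'+b'$ with $a'\in A'$, $b'\in B'$ lifts to exactly $|T|$ representations of $z$ in $A\times B$; therefore $\nu_{A,B}(z)=|T|\,\nu_{A',B'}(\pi(z))$ for every $z\in G$.

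Finally, since $d>2L(G)-1=2L(G/T)-1$, Proposition \ref{catorze} applies to the pair $(A',B')$ in $G/T$ and gives $\nu_{A',B'}(\pi(z))>L(G/T)=L(G)$ whenever $\pi(z)\notin K(G/T)\cup(2b+K(G/T))$. Multiplying by $|T|$ yields the desired inequality $\nu_{A,B}(z)>|T|\,L(G)$ for every $z\in G\setminus\pi^{-1}(K(G/T)\cup(2b+K(G/T)))$, and the ``in particular'' clause is the contrapositive. The main obstacle will be establishing the saturation $A=\pi^{-1}(A')$, $B=\pi^{-1}(B')$; this hinges on the identity $L(G/T)=L(G)$, without which the cardinality inequality above would not be tight and the lifting would fail. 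Once that identity is secured, the remainder is a routine transfer of Proposition \ref{catorze} through the projection $\pi$.
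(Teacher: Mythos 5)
Your proof is correct and follows exactly the route the paper intends: the corollary is stated without proof as an immediate consequence of Proposition \ref{catorze}, and your reduction (establishing $L(G/T)=L(G)$, deducing the $T$-saturation $A=\pi^{-1}(A')$, $B=\pi^{-1}(B')$ from the cardinality hypothesis, and transferring the bound via $\nu_{A,B}(z)=|T|\,\nu_{A',B'}(\pi(z))$) is precisely the argument the authors leave implicit, using the same $Y=\{y\in G:\,2y\in T\}$ computation that appears elsewhere in the paper.
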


The next theorem gives a characterization of the critical sets
of Theorem \ref{general}, provided a restriction holds.

\begin{theorem}
\label{characterization_general}
Let $A,B$ and $S$ be non-empty subsets of a finite Abelian group $G$ with
$|A|+|B|=|G|+L_S$. Assume that $0$ is an $A\wedge^S B$-exception
with $0\in A\cap B\cap S$ and that $|A\wedge^S B|= |G|-2|S|$.
The following conditions hold true:
\begin{itemize}
\item[(i)] There exists a decomposition
$S=S_1\cup S_2\cup\ldots \cup S_k$ of $S$ modulo $2 \cdot G$ such that
$|S_i|=L_S/L(G)$ for each $i\in \{1,2,\ldots, k\}$ and
$|A\wedge^{S_i}B|=|G|-2|S_i|$ for each $i\in \{1,2,\ldots, k\}$.
Moreover, the set of $A\wedge^SB$-exceptions can be partitioned
in the form
$G\setminus (A\wedge^SB)=\cup_{i=1}^k(G\setminus (A\wedge^{S_i}B)).$
\end{itemize}
Without loss of generality, we can assume that $0\in S_1$.
\begin{itemize}
\item[(ii)]  $S_1$ is a group.
\end{itemize}
\noindent Let $\pi: G\rightarrow G/S_1$ be the natural projection and
let $A'=\pi (A)$ and $B'=\pi (B)$.
\begin{itemize}
\item[(iii)] There exists $b\in A'\cap B'$ such that
$H=\subgp{2b}$ is a subgroup of $G/S_1$ of odd order $d$ greater than $1$.
\item[(iv)] There exists
$\{x_1,\ldots , x_k,x_{k+1},\ldots ,x_l,x_{l+1},\ldots,x_m\}
\subset (G/S_1)\setminus (K(G/S_1)+H)$, where $m=(|G/S_1|/d-L(G/S_1))/2$
and $0\le k,l\le m$, such that
\begin{eqnarray*}
&& G/S_1\setminus (K(G/S_1)+H)=
\bigcup_{i=1}^m((x_i+H)\cup (-x_i+H)),\\
&& A'=\{0,b,3b,\ldots, (d-2)b\}+K(G/S_1)) \cup
(\{x_1,\ldots , x_k,\pm x_{k+1},\ldots ,\pm x_l\}+H),\\
&& B'=\{0,b,3b,\ldots, (d-2)b\}+K(G/S_1)) \cup
(\{x_1,\ldots , x_k,\pm x_{l+1},\ldots ,\pm x_m\}+H).
\end{eqnarray*}
\end{itemize}

\noindent Moreover, if we suppose that $(d+1)/2> L (G/S_1)$ then
\begin{itemize}
\item[(v)] $S_i=y_i+S_1$, for each $i=1,\ldots, k$, where
$Y=\{y\in G: 2y \in S_1\}$ and $Y=\cup_{i=1}^r (y_i+S_1)$, $r\ge k$,
is a decomposition modulo $S_1$. In particular, $2 \cdot S\subset S_1$,
\item[(vi)] The set of $A\wedge^SB$-exceptions is $\{0,2b^*\}+S$,
where $b^*\in \pi^{-1}(b)$, and
\item[(vii)]
$(\{0,2b\}+\{\pi (y_1),\pi (y_2),\ldots ,\pi (y_k)\})\cap (Q+R)=\emptyset$,
where $Q= \{x_1,\ldots,x_k,\pm x_{k+1},\ldots, \pm x_l\}+H$ and
$R=\{x_1,\ldots,x_k,\pm x_{l+1},\ldots, \pm x_m\}+H$.
\end{itemize}
\end{theorem}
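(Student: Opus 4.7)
My plan is to prove the seven parts in order. Parts (i)--(iv) do not use the additional hypothesis and follow from the paper's previously established tools; parts (v)--(vii), which invoke $(d+1)/2 > L(G/S_1)$, contain the real technical work.

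Part (i) is exactly Theorem~\ref{tretze}. For parts (ii)--(iv) I first observe that the component $S_1 \ni 0$ of $S$ modulo $2 \cdot G$ lies inside $2 \cdot G$, so Lemma~\ref{L_S}(vi) gives $L_{S_1} = |S_1| L(G)$; the partition of exceptions in (i) guarantees that $0$ is an $A \wedge^{S_1} B$-exception. Applying Theorem~\ref{characterization_even} to $(A, B, S_1)$ with $s = 0$, $b_s = 0$ then simultaneously yields (ii) (namely $\Sigma = S_1$ is a subgroup) together with the Guo-type description of $\pi(A),\pi(B)$ in $G/S_1$ asserted in (iii), (iv). A side consequence is that $A$ and $B$ are unions of $S_1$-cosets, so $\nu_{A,B}(z) = |S_1|\,\nu_{A',B'}(\pi(z))$.

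For (v)--(vii), the extra hypothesis combined with (iii), (iv) meets exactly the assumptions of Corollary~\ref{setze} (equivalently Proposition~\ref{catorze} applied in $G/S_1$) with $T = S_1$. Hence any $A \wedge^S B$-exception $z$, which satisfies $\nu(z) = L_S$ by Lemma~\ref{exception}(ii), must lie in $Y \cup (2b^* + Y)$. Fixing $i$, Proposition~\ref{proof_lower_2s}(iii) and Theorem~\ref{exceptions} applied to $(A,B,S_i)$ (using $L_{S_i} = |S_i| L(G)$, since $S_i$ lies in one $2\cdot G$-coset) show that $G \setminus (A \wedge^{S_i} B)$ is a disjoint union of two cosets of some subgroup $H_i \le 2 \cdot G$ with $|H_i|=|S_1|$. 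The crucial step is to prove $H_i = S_1$: both $H_i$-cosets sit inside $Y \cup (2b^*+Y)$, whose stabilizer in $G$ is $Y$, so projecting to $G/S_1$ (and using $4b \ne 0$ in $G/S_1$ to identify the stabilizer of $K(G/S_1) \cup (2b+K(G/S_1))$ as $K(G/S_1)$) gives $\pi(H_i) \subset K(G/S_1)$. A finer analysis, combining Proposition~\ref{catorze}'s description of the locus $\{u : \nu_{A',B'}(u) = L(G/S_1)\}$ with a careful count of exceptions per $S_1$-fiber, then refines this to $\pi(H_i) = \{0\}$, whence $H_i \subset S_1$ and $H_i = S_1$ by cardinality. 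Consequently each $S_i$ is a coset of $S_1$, and containment of the exception set in $Y \cup (2b^* + Y)$ lets us pick $S_i = y_i + S_1$ with $y_i \in Y$, proving (v).

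Part (vi) is then immediate: the exception set partitions as $\bigcup_{i=1}^k \bigl((y_i+S_1) \cup (2b^*+y_i+S_1)\bigr) = \{0, 2b^*\}+S$. For (vii), Proposition~\ref{catorze} gives $\nu_{A', B'}(u) > L(G/S_1)$ on $Q+R$, so projections of exceptions must avoid $Q+R$; since $\pi(G \setminus (A \wedge^S B)) = \{0,2b\} + \{\pi(y_1), \ldots, \pi(y_k)\}$, the disjointness in (vii) follows. The main obstacle is establishing $H_i = S_1$: a cardinality argument alone is insufficient because $2 \cdot G$ may contain several subgroups of order $|S_1|$, and one must interleave (a) the containment of $H_i$-cosets in $Y \cup (2b^*+Y)$, (b) the sharp description of $\nu_{A',B'}$ on $K(G/S_1) \cup (2b + K(G/S_1))$ from Proposition~\ref{catorze}, and (c) a careful fiberwise accounting of exceptions.
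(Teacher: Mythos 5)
Your treatment of (i)--(iv) follows the paper's route exactly (Theorem \ref{tretze}, then Theorem \ref{characterization_even} applied to $(A,B,S_1)$ with $s=b_s=0$, which is legitimate since $S_1=S\cap 2\cdot G$ gives $L_{S_1}=|S_1|\,L(G)$ and the partition in (i) makes $0$ an $A\wedge^{S_1}B$-exception), and your use of Corollary \ref{setze} to place every exception in $Y\cup(2b^*+Y)$ is also the intended step. The gap is in the step you yourself flag as crucial, the identification $H_i=S_1$. First, the mechanism you propose --- ``a careful count of exceptions per $S_1$-fiber'' --- is essentially circular: since $A$ and $B$ are unions of $S_1$-cosets, one checks that $z+t$ (for $t\in S_1$) is an exception exactly when $t$ stabilizes the difference set of $z$, so knowing that the exceptions fill complete $S_1$-fibers is equivalent to knowing that each $S_i$ is $S_1$-periodic, i.e.\ to (v) itself. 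Second, the containment $\pi(H_i)\subset K(G/S_1)$ that you do establish from Theorem \ref{exceptions} is strictly weaker than $\pi(H_i)=\{0\}$, and no cardinality bookkeeping closes that gap, since $Y\cap 2\cdot G$ may contain several subgroups of order $|S_1|$ besides $S_1$.

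The missing idea is an analysis of difference sets rather than of exception counts. For an $A\wedge^{S_i}B$-exception $z$ one has $\nu(z)=L_S=|S_i|\,L(G)$ with every difference $a-\beta$ lying in $S_i$; since each difference accounts for at most $L(G)$ pairs, the differences exhaust $S_i$. Because $\nu_{A',B'}(\pi(z))=L(G/S_1)$, Corollary \ref{quinze} puts $\pi(z)$ in $(K(G/S_1)\cup(2b+K(G/S_1)))\setminus(Q+R)$, so every representation of $\pi(z)$ in $A'+B'$ comes from $P+P$ with $P=\{0,b,3b,\ldots,(d-2)b\}+K(G/S_1)$; a short computation using that $d$ is odd shows all these representations have one and the same difference $\kappa_0\in K(G/S_1)$. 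Hence $\pi(S_i)=\{\kappa_0\}$, so $S_i=\pi^{-1}(\kappa_0)$ is a full $S_1$-coset $y_i+S_1$ with $2y_i\in S_1$ --- this gives (v) directly, and $H_i=S_1$ falls out as a consequence rather than a prerequisite. The same computation shows $\pi(z)\in\{\pi(y_i),\,2b+\pi(y_i)\}$, which is what actually pins the two exception cosets of $A\wedge^{S_i}B$ down to $S_i$ and $2b^*+S_i$; your statement of (vi) as ``immediate'' silently uses this identification, which your version of (v) does not supply.
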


\begin{proof} (i) holds by Theorem \ref{tretze}.
Theorem \ref{characterization_even} implies (ii), (iii) and (iv).
Finally, by Proposition \ref{catorze},  Corollary \ref{quinze} and
Corollary \ref{setze} we obtain (v), (vi) and (vii).
\end{proof}

\section*{Acknowledgements}

Research done when the second author was visiting Universit\'e
Pierre et Marie Curie, E. Combinatoire, Paris,
supported by the Ministry of Education, Spain,
under the National Mobility Programme of Human Resources,
Spanish National Programme I-D-I 2008--2011.
\bigskip

\end{document}